\documentclass[12pt,amstex]{amsart}

\usepackage{mathptmx}%字体
\usepackage{mathrsfs}

\usepackage{verbatim}
\usepackage{url}
\usepackage[all]{xy}
\usepackage{color}

\usepackage[colorlinks=true,citecolor=blue]{hyperref}% 引用和定理颜色

\usepackage{stmaryrd}
\usepackage{epsfig}
\usepackage{amsmath}
\usepackage{amssymb}

\usepackage{amscd}
\usepackage{graphicx}
\usepackage{pstricks}
\usepackage{tocvsec2}

\topmargin=0pt \oddsidemargin=0pt \evensidemargin=0pt
\textwidth=15cm \textheight=22cm \raggedbottom

\theoremstyle{plain}
\newtheorem{thm}{Theorem}[section]
\newtheorem{lem}[thm]{Lemma}

\newtheorem{prop}[thm]{Proposition}
\newtheorem{ques}{Question}

\newtheorem{cor}[thm]{Corollary}
\theoremstyle{definition}
\newtheorem{de}[thm]{Definition}
\newtheorem{exam}[thm]{Example}
\newtheorem{rem}[thm]{Remark}

\numberwithin{equation}{section}

\def \N {\mathbb N}

\def \Z {\mathbb Z}
\def \R {\mathbb R}
\def \bbs {{\mathbb S}}

\def \O {\mathcal{O}}

\def \F {\mathcal F}
\def \G {\mathcal{G}}

\def \E {\mathcal E}

\def \X {\mathcal{X}}

\def \M {{\bf M}}

\def \id {{\rm id}}

\def \a {\alpha }
\def \b {\beta}
\def \ep {\epsilon}
\def \d {\delta}
\def \D {\Delta}

\def \c {\circ}
\def \w {\omega}

\def \T {\mathbb T}

\begin{document}

\title{Almost proximal extensions of minimal flows}

\author{Yang Cao}

\author{Song Shao}

\address{School of Mathematical Sciences, University of Science and Technology of China, Hefei, Anhui, 230026, P.R. China}

\email{cy412@mail.ustc.edu.cn}

\email{songshao@ustc.edu.cn}

\subjclass[2010]{Primary: 37B20, Secondary: 37B05, 37A25}

%\keywords{regionally proximal relation; minimal flow}

\thanks{This research is supported by NNSF of China (11971455, 11571335).}

%\date{2021.10.02}

\begin{abstract}
In this paper we study almost proximal extensions of minimal flows. Let $\pi: (X,T)\rightarrow (Y,T)$ be an extension of minimal flows. $\pi$ is called an almost proximal extension if there is some $N\in \N$ such that the cardinality of any almost periodic subset in each fiber is not greater than $N$. When $N=1$, $\pi$ is proximal. We will give the structure of $\pi$ and give a  dichotomy theorem: any almost proximal extension of minimal flows is either almost finite to one, or almost all fibers contain an uncountable strongly scrambled subset. Using category method Glasner and Weiss showed the existence of proximal but not almost one to one extensions \cite{GW79}. In this paper, we will give explicit such examples, and also examples of almost proximal but not almost finite to one extensions.
\end{abstract}

\maketitle

%\markboth{ergodic}{S. Shao and X.D. Ye}

%\newpage

%\tableofcontents \settocdepth{subsection}

%\newpage

\section{Introduction}

The structure theory of minimal flows originated
in Furstenberg's seminal work \cite{F63} for distal minimal flows, and the structure theorem for the general minimal flows was built in Ellis-Glasner-Shapiro \cite{EGS},
McMahon \cite{Mc76}, Veech \cite{V77}, and
Glasner \cite{G76}. Roughly speaking, the class of minimal flows
is the smallest class of flows containing the trivial flow
and closed under homomorphisms, inverse limits, and has three
``building blocks'' which are equicontinuous extensions, proximal extensions and
topologically weakly mixing extensions. In this paper, we mainly study proximal extensions
and almost proximal extensions. Refer to \cite{G76} for a systematical study of proximal flows.

\medskip

In this paper, a {\em flow} $(X,T)$ is a compact metric space $X$ with an infinite countable discrete group $T$ acting continuously on $X$.
Let $\pi: (X,T)\rightarrow (Y,T)$ be an extension of minimal flows. The proximal relation $P(X)$ is defined by $$P(X)=\{(x,x')\in X^2: \inf_{t\in T} d(tx,tx')=0\}.$$
Any pair in $P(X)$ is called a {\em proximal pair}. $\pi$ is {\em proximal} if for any pair $(x,x')$ in the same fiber is proximal, i.e., $(x,x')\in P(X)$ whenever $\pi(x)=\pi(x')$.
A subset $A$ is called an {\em almost periodic set} if every finite subset $\{x_1,x_2,\ldots, x_n\}$ of $A$, $(x_1,x_2,\ldots, x_n)$ is minimal in the product flow $(X^n,T)$.
Note that any pair $(x,x')$ with $x\neq x'$ will not be proximal if $(x,x')$ is minimal. By this fact it is easy to show that $\pi$ is proximal if and only if each almost periodic subset in the fiber of $\pi$ is a singleton. Inspired by this, we call $\pi$ an {\em almost proximal extension} if there is some $N\in \N$ such that the cardinality of any almost periodic subset in each fiber is not greater than $N$. When $N=1$, $\pi$ is proximal.

\medskip

An extension $\pi: (X,T)\rightarrow (Y,T)$ of minimal flows is {\em almost finite to one} if some fiber is finite, i.e., there is some $y$, $\pi^{-1}(y)$ is finite. It is not difficult is see that any almost finite to one extension is almost proximal (Proposition \ref{almost N-1}). After we study the structure of almost proximal extensions, we show that an extension $\pi: X\rightarrow Y$ of minimal flows is almost finite to one if and only if it is almost proximal and point distal.

\medskip

It is an open question \cite{AGHSY} that: is any non point distal minimal flow ( i.e., for any point $x\in X$ , there is $x'\neq x$ such that $(x,x')$ is proximal) chaotic in the sense of Li-Yorke? It was showed in \cite{AGHSY} that if a minimal flow $(X,T)$ is a proximal but not almost one to one extension of some flow $(Y,T)$, then $(X,T)$ is not point distal and it is Li-Yorke chaotic. In this paper, we generalize this result, and show that almost proximal extension has the following dichotomy theorem: any almost proximal extension of minimal flows is either almost finite to one, or almost all fibers contain an uncountable strongly scrambled subset. In particular, if a minimal flow $(X,T)$ is an almost proximal but not almost finite to one extension of some flow $(Y,T)$, then $(X,T)$ is not point distal and it is Li-Yorke chaotic.

\medskip

Since there is no non-trivial proximal minimal flow under abelian group action \cite[Theorem 3.4]{G76}, it is not easy to give a minimal $\Z$-flow which is proximal but not almost one to one extension of its maximal equicontinuous factor. In fact this was a question by Furstenberg several years ago. Using category method Glasner and Weiss showed the existence of proximal but not almost one to one extensions \cite{GW79}. In this paper, using methods in \cite{CM06}, we will give explicit such examples, and also examples of almost proximal but not almost finite to one extensions.
In addition, all examples constructed are uniformly rigid.

\subsection{The organization of the paper}\
\medskip

We organize the paper as follows. In Section \ref{section-tds}, we introduce some basic notions and results needed in the paper. In Section \ref{section-ap}, we introduce the notion of almost proximal and give its structure. In Section \ref{section-wm-ap}, we study chaotic properties of proximal but not almost one to one extensions. In Section \ref{section-main}, We will give a  dichotomy theorem: any almost proximal extension of minimal flows is either almost finite to one, or almost all fibers contain strongly scrambled subset. In Section \ref{section-example}, we will give explicit examples of almost proximal but not almost finite to one extensions. And in the final section, we will give some questions.

\bigskip

\noindent{\bf Acknowledgments:} We would like to thank Professor Wen Huang and Professor Xiangdong Ye for their very useful comments.

%We also would like to thank the referee for the very careful reading and many useful comments.

\section{Basic facts about abstract topological dynamics}\label{section-tds}

In this section we recall some basic definitions and results in
abstract topological flows. For more details, see \cite{Au88, Ellis,
G76, V77, Vr}. In the article, integers, nonnegative integers and natural numbers
are denoted by $\Z$, $\Z_+$ and $\N$ respectively.

\subsection{Topological transformation groups}\
\medskip

A {\em flow} or a {\em topological dynamical system} is a triple
$\X=(X, T, \Pi)$, where $X$ is a compact Hausdorff space, $T$ is a
Hausdorff topological group and $\Pi: T\times X\rightarrow X$ is a
continuous map such that $\Pi(e,x)=x$ and
$\Pi(s,\Pi(t,x))=\Pi(st,x)$, where $e$ is the unit of $T$, $s,t\in T$ and $x\in X$. We shall fix $T$ and suppress the
action symbol.
% Note that in the literatures, $\X$ is also called a {\em topological transformation group}.

In this paper, we always assume that $T$ is infinite countable and discrete, unless we state it explicitly in some places.
Moreover, we always assume that $X$ is a compact metric space with metric $d(\cdot, \cdot)$.

\medskip

When $T=\Z$, $(X,T)$ is determined by a homeomorphism $f$, i.e., $f$ is the transformation corresponding to $1$ of $\Z$. In this case, we usually denote $(X,\Z)$ by $(X,f)$, and also call it {\em discrete flow}.

\medskip

Let $(X,T)$ be a flow and $x\in X$, then $\O(x,T)=\{tx: t\in T\}$ denotes the
{\em orbit} of $x$, which is also denoted by $T x$. We usually denote the closure of $\O(x,T)$ by $\overline{\O}(x,T)$, or $\overline{Tx}$. A subset
$A\subseteq X$ is called {\em invariant} if $t a\subseteq A$ for all
$a\in A$ and $t\in T$. When $Y\subseteq X$ is a closed and invariant subset of the flow $(X, T)$ we say that the flow $(Y, T)$ is a {\em subflow} of $(X, T)$. If $(X, T)$ and $(Y,T)$ are two flows, their {\em product flow} is the
flow $(X \times Y, T)$, where $t(x, y) = (tx, ty)$ for any $t\in T$ and $x,y\in X$.
For $n \geq 2$ we write $(X^n,T)$ for the $n$-fold product flow $(X\times
\cdots \times X,T)$.

\medskip

A flow $(X,T)$ is called {\em minimal} if $X$ contains no proper non-empty
closed invariant subsets. A point $x\in X$ is called a {\em minimal point} or an {\em almost periodic point} if $(\overline{\O}(x,T), T)$ is a mimimal flow.

A flow $(X,T)$ is called {\em transitive} if every invariant open subset of $X$ is dense; and it is {\em point transitive} if there is a point with a dense orbit. It is easy to verify that a flow is
minimal if and only if every orbit is dense.

%A flow $(X,T)$ is called {\em minimal} if $X$ contains no proper closed invariant subsets. $(X,T)$ is called {\em transitive} if every invariant open subsets of $X$ is dense. An example of an transitive flow is a {\em point-transitive} flow, which is a flow with a dense orbit. It is easy to verify that a flow is minimal iff every orbit is dense.

The flow $(X,T)$ is {\em weakly mixing} if the product flow $(X
\times X,T)$ is transitive.

\medskip

A {\it factor map} $\pi: X\rightarrow Y$ between the flow $(X,T)$
and $(Y, T)$ is a continuous onto map which intertwines the
actions; we say that $(Y, T)$ is a {\it factor} of $(X,T)$ and
that $(X,T)$ is an {\it extension} of $(Y,S)$.
The flows are said to be {\it isomorphic} if $\pi$ is bijective.

\medskip

\medskip

Let $(X,T)$ be a flow. Fix $(x,y)\in X^2$. It is a {\it proximal} pair if $\inf_{t\in T} d(tx, ty)=0$; it is a {\it distal} pair if it is not proximal. Denote by $P(X,T)$ the set of proximal pairs of $(X,T)$. $P(X,T)$ is also called the proximal relation. A flow $(X,T)$ is {\it distal} if $P(X,T)= \D(X)$. A flow $(X,T)$ is {\em equicontinuous} if for any $\ep>0$, there is a $\d>0$ such that whenever $x,y\in X$ with  $d(x,y)<\d$,
then $d(tx,ty)<\ep$ for all $t\in T$.
Any equicontinuous flow is distal.

Let $(X,T)$ be a flow. There is a smallest invariant equivalence relation $S_{eq}$ such that the quotient flow $(X/S_{eq},T)$ is equicontinuous \cite{EG60}. The equivalence relation $S_{eq}$ is called the {\em equicontinuous structure relation} and the factor $(X_{eq}=X/S_{eq}, T)$ is called the {\em maximal equicontinuous factor} of $(X,T)$.

%Let $\pi:(X,T)\rightarrow (Y,T)$ be an extension of flow We say that $\pi$ is {\em finite to one} if each fiber is finite, and {\em almost finite to one} if there is a residual set $Y_0\subseteq Y$ such that for each $y\in Y_0$, the fiber of $y$ is finite. We note that if $(Y,T)$ is minimal this is equivalent to say that there is a finite fiber. If there is some $N\in \N$ and a dense $G_\d$ set $X_0$ of $X$ such that for each $x\in X_0$, the cardinality of the fiber $\pi^{-1}(\pi(x))$ is $N$, then we also call $\pi$ is almost $N$ to one. If a flow $(X,T)$ is minimal and is an almost one to one extension of some equicontinuous flow then we call it an {\em almost automorphic} flow.

\subsection{Enveloping semigroups}\
\medskip

Given a flow $(X,T),$ its {\em enveloping semigroup} or {\em Ellis semigroup} $E(X,T)$ is defined as the closure of the set $\{t: t\in T\}$ in $X^X$ (with its compact, usually non-metrizable, pointwise
convergence topology). For a enveloping semigroup, $E(X,T)\rightarrow E(X,T):$ $q\mapsto qp$ and $p\mapsto tp$ is continuous for all $p\in E(X,T)$ and $t\in T$. Note that $(X^X,T)$ is a flow and $(E(X,T),T)$ is its subflow.

%\medskip

%Let $(X,T),(Y,T)$ be flows and $\pi: X\rightarrow Y$ be an extension. Then there is a unique continuous semigroup homomorphism $\phi : E(X,T)\rightarrow E(Y,T)$ such that $\pi(px)=\phi(p)\pi(x)$ for all $x\in X,p\in E(X,T)$. When there is no confusion, we usually regard the enveloping semigroup of $X$ as acting on $Y$: $p\pi(x)=\pi(px)$ for $x\in X$ and $p\in E(X,T)$.

%\subsection{$\E$-semigroup}\
%\medskip

%Inspired by the definition of enveloping semigroup, the following definition is introduced by Auslander and Furstenberg.

\begin{de}
A set $E$ is an $\E$-semigroup if it
satisfies the following three conditions:
\begin{enumerate}
  \item $E$ is a semigroup.
  \item $E$ has a compact Hausdorff topology.
  \item The right translation map $R_p: E \longrightarrow E,
q\longmapsto qp$ is continuous for every $p \in E$.
\end{enumerate}
\end{de}

It is easy to see that for a flow $(X,T)$ the enveloping
semigroup $E(X,T)$ is an $\E$-semigroup.

%\subsection{Idempotents and ideals}\
\medskip

For a semigroup the element $u$ with $u^2=u$ is called {\em
idempotent}.
Ellis-Numakura Theorem says that for any $\E$-semigroup $E$ the set $J(E)$ of idempotents of $E$ is not empty \cite{Ellis}.

%We can introduce a quasi-order (a reflexive, transitive relation) $<_R$ on the set $J(E)$  by defining $v <_R u $ iff $uv=v$. If $v<_R u$ and $u <_R v$ we say that $u$ and $v$ are equivalent and write $u \sim_R v$. Similarly we define $<_L$ and $\sim_L$.

%An idempotent $u \in J(E)$ is {\it maximal} if $v \in J(E)$ and $u<_R v$ implies $v<_R u$. And the minimal idempotent is defined similarly.

\medskip

Let $E$ be an $\E$-semigroup. A non-empty subset $I \subseteq E$  is a {\em left ideal}
if $EI \subseteq I$. A {\it minimal left ideal} is a
left ideal that does not contain any proper left ideal of $E$.
Every left ideal is a semigroup and every left ideal
contains some minimal left ideal.
%An important fact is that each minimal left ideal is closed. To see this, let $I$ be a minimal left ideal, and let $p\in I$. Since $Ep\subseteq EI\subseteq I$ and $E(Ep)\subseteq Ep$, we have that $Ep=I$ by minimality of $I$. Now as $R_p: q\mapsto qp$ is continuous, $I=R_p(E)$ is closed.

\begin{prop}\cite[Chapter 6]{Au88} \label{prop-proximal}
Let $(X,T)$ be a flow, $x,y\in X$.
\begin{enumerate}
  \item For each $u\in J(E(X,T))$, $(x,ux)\in P(X,T)$.

  \item A pair $(x,y)\in P(X,T)$ if and only if $px=py$ for some $p\in E(X,T)$, if and
only if there is some minimal left ideal $I$ of $E(X,T)$ such that $px=py$ for every $p\in I$.

  \item If $(X,T)$ is a minimal flow, then $(x,y)\in P(X)$ if and only if there is a minimal idempotent $u$ such that $y=ux$.
\end{enumerate}

\end{prop}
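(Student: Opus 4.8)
The plan is to prove the three parts in turn, using only the $\E$-semigroup structure of $E(X,T)$, its compactness in $X^X$ (so that every net in $T$ has a subnet converging pointwise to an element of $E(X,T)$), the metrizability of $X$, and the Ellis--Numakura theorem. For part (1), write the idempotent $u$ as a pointwise limit $u=\lim_\alpha t_\alpha$ of a net in $T$. Then $t_\alpha x\to ux$ and $t_\alpha(ux)\to u(ux)=u^2x=ux$, and since $X$ is metric, two nets converging to the same point $ux$ satisfy $d(t_\alpha x,t_\alpha(ux))\to 0$; hence $\inf_{t\in T}d(tx,t(ux))=0$ and $(x,ux)\in P(X,T)$. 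The identical net argument yields the auxiliary remark that \emph{$px=py$ for some $p\in E(X,T)$ forces $(x,y)\in P(X,T)$}: take $t_\alpha\to p$, so $t_\alpha x\to px=py$ and $t_\alpha y\to py$, whence $d(t_\alpha x,t_\alpha y)\to 0$.

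For part (2), I would run a cycle of implications. If $(x,y)\in P(X,T)$, pick a net $(t_\alpha)$ in $T$ with $d(t_\alpha x,t_\alpha y)\to 0$ and, passing to a subnet, assume $t_\alpha\to p\in E(X,T)$; continuity of $d$ together with $t_\alpha x\to px$ and $t_\alpha y\to py$ then forces $px=py$. If conversely $px=py$ for some $p\in E(X,T)$, then $Ep$ is a left ideal (as $E$ is a semigroup, $E(Ep)=(EE)p\subseteq Ep$), and every $q=rp\in Ep$ satisfies $qx=r(px)=r(py)=qy$; choosing a minimal left ideal $I\subseteq Ep$ gives a minimal left ideal on which $qx=qy$ holds identically. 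Finally, the existence of such an $I$ produces a single $p$ with $px=py$, which closes the loop back to $(x,y)\in P(X,T)$ via the auxiliary remark from part (1).

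For part (3), the backward direction is immediate from part (1) (minimality of $u$ is not needed there). For the forward direction, given $(x,y)\in P(X,T)$ apply part (2) to obtain a minimal left ideal $I$ with $px=py$ for all $p\in I$; the crux is to find an idempotent $u\in I$ with $uy=y$. Here I would invoke minimality of $(X,T)$: the set $Iy$ is nonempty, closed (a minimal left ideal is compact, being $Ep=R_p(E)$ for any of its elements), and invariant ($TI\subseteq EI\subseteq I$), hence $Iy=X$; in particular $y\in Iy$, so $I_y:=\{p\in I:py=y\}\neq\emptyset$. One checks that $I_y$ is closed in $E(X,T)$, closed under multiplication, and stable under every right translation $R_p$, hence is itself an $\E$-semigroup; Ellis--Numakura then supplies an idempotent $u\in I_y\subseteq I$. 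Then $u$ is a minimal idempotent, $uy=y$, and since $ux=uy$ (because $u\in I$) we conclude $y=ux$.

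I expect the only real subtlety to be the verification in part (3) that $I_y$ is an $\E$-semigroup, so that Ellis--Numakura applies: this requires closedness of $I_y$ (from continuity of the evaluation $p\mapsto py$), closure under multiplication ($p,q\in I_y\Rightarrow (pq)y=p(qy)=py=y$, with $pq\in I$ since $I$ is a semigroup), and that $R_p$ maps $I_y$ into itself and is continuous; together with the observation that a closed invariant $Iy$ must equal $X$ by minimality. Everything else reduces to routine manipulation of nets and of left ideals in $E(X,T)$.
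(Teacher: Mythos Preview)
The paper does not supply its own proof of this proposition; it simply cites \cite[Chapter~6]{Au88} and uses the statement as a black box. Your argument is correct and is essentially the standard proof one finds in Auslander's book: the net-limit argument for (1) and the auxiliary implication, the cycle of implications in (2) via the left ideal $Ep$, and in (3) the use of minimality to get $Iy=X$, the $\E$-semigroup $I_y=\{p\in I:py=y\}$, and Ellis--Numakura to extract a minimal idempotent $u$ with $uy=y$ and hence $y=ux$.
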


\subsection{Universal point transitive flow and universal minimal flow}\
\medskip

For a fixed $T$,  there exists a universal point transitive flow
$(S_T,T)$ such that $T$ can densely and
equivariantly be embedded in $S_T$ \cite[Chapter 8]{Au88}. The multiplication on $T$ can
be extended to a multiplication on $S_T$, then $S_T$ is an $\E$-semigroup. The universal minimal
flow $\mathfrak{M}=(\M,T)$ is isomorphic to any minimal left
ideal in $S_T$ and $\M$ is also an $\E$-semigroup. Hence $J=J(\M)$ of idempotents in $\M$ is nonempty. For $x\in X$, set $J_x=\{u\in J: ux=x\}$.

\begin{prop}\cite[Chapter 6]{Au88}\label{prop-minimal-ideal}
\begin{enumerate}
\item  For $v\in J$ and $p\in \M$, $pv=p$.
\item For each $v\in J$,\ $v\M=
\{vp:p\in \M\}=\{p\in \M: vp=p\}$ is a subgroup of $\M$ with
identity element $v$.
For every $w\in J$ the map $p\mapsto wp$
is a group isomorphism of $vI$ onto $wI$.
\item $ \{v\M:v\in J\}$ is a partition
of $\M$. Thus if $p\in \M$ then there exists a unique
$v\in J$ such that $p\in v\M$.
\end{enumerate}
\end{prop}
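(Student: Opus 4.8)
The plan is to derive all three parts from two facts: the minimality of $\M$, which forces $\M p=\M$ for every $p\in\M$, and the Ellis--Numakura theorem. First I would record the key lemma. Since $\M$ is a minimal left ideal in the $\E$-semigroup $S_T$, it is a compact sub-$\E$-semigroup (as $\M\M\subseteq S_T\M\subseteq\M$), and for any $p\in\M$ the set $\M p$ is nonempty, is contained in $\M\M\subseteq\M$, and is a left ideal of $S_T$ because $S_T(\M p)=(S_T\M)p\subseteq\M p$; hence $\M p=\M$ by minimality. Part (1) is then immediate: given $v\in J$ and $p\in\M$, write $p=qv$ with $q\in\M$, so $pv=qv^2=qv=p$. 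Taking $p$ to be an idempotent $w$ yields $wv=w$, and by symmetry $vw=v$, for all $v,w\in J$; these relations, together with $pv=p$ and (for $p\in v\M$) $vp=p$, are what everything else runs on.

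For part (2), I would first identify $v\M=\{p\in\M:vp=p\}$: if $p=vq$ then $vp=v^2q=vq=p$, and the reverse inclusion is trivial. Then $v\M$ is closed under the product, since $(vp)(vq)=v(pvq)\in v\M$; it has $v$ as a two-sided identity by the displayed description together with part (1); and every $r\in v\M$ has a left inverse inside $v\M$, because $\M r=\M$ gives $sr=v$ for some $s\in\M$, and then $vs\in v\M$ satisfies $(vs)r=v(sr)=v^2=v$. A standard monoid argument (if $ba=e$ and $cb=e$ then $c=c(ba)=(cb)a=a$, so $ab=e$) upgrades one-sided inverses to two-sided ones, so $v\M$ is a group with identity $v$. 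For the isomorphism I would take $\phi\colon v\M\to w\M$, $\phi(p)=wp$: it is a homomorphism since $\phi(p)\phi(q)=(wp)(wq)=w(pw)q=wpq=\phi(pq)$ using $pw=p$; injective since $wp=wq$ forces $vp=vwp=vwq=vq$, i.e.\ $p=q$, using $vw=v$; and onto since $\phi(vt)=wvt=wt=t$ for $t\in w\M$, using $wv=w$.

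For part (3), covering is where Ellis--Numakura enters: given $p\in\M$, the set $A=\{q\in\M:qp=p\}$ is nonempty (because $\M p=\M$), closed (it is $R_p^{-1}(\{p\})$), and a sub-semigroup of $\M$, hence a compact $\E$-semigroup, so it contains an idempotent $v$; then $v\in J$ and $vp=p$, i.e.\ $p\in v\M$. For disjointness, suppose $p\in v\M\cap w\M$, so $vp=p=wp$; writing $p^{-1}$ for the inverse of $p$ in the group $v\M$, so $pp^{-1}=v$, we get $w=wv=w(pp^{-1})=(wp)p^{-1}=pp^{-1}=v$, whence $v\M=w\M$. Thus the sets $v\M$ are pairwise equal or disjoint and cover $\M$, which is the asserted partition, and the uniqueness of the $v$ with $p\in v\M$ follows.

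I expect the only genuine subtleties to be, in part (2), arranging the one-sided inverse to lie inside $v\M$ and promoting it to a two-sided inverse, and, throughout, keeping careful track of whether a given set is a left ideal of $S_T$ (so that minimality of $\M$ is legitimately applicable) rather than merely a left ideal of $\M$. The rest is formal manipulation with the relations $pv=p$ and $vp=p$.
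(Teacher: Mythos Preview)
Your argument is correct and is essentially the standard proof (as in Auslander's Chapter~6): the key facts $\M p=\M$, the idempotent relations $pv=p$ and (for idempotents) $wv=w$, the one-sided-inverse-to-group trick, and Ellis--Numakura for the covering in part~(3) are exactly what one uses. The paper itself does not give a proof of this proposition at all---it is stated as a citation to \cite{Au88}---so there is nothing further to compare; your write-up would serve as a self-contained replacement for that citation, and your closing remarks about the only subtleties (keeping inverses inside $v\M$, and checking that the relevant sets are left ideals of $S_T$ rather than merely of $\M$) are well placed.
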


Since $v\M$ is a group ($v\in J$), for $p\in \M$, we denote $(vp)^{-1}$ as the inverse of $vp$ in $v\M$.

\medskip

The sets $S_T$ and $\M$ act on $X$ as semigroups and $S_T
x=\overline{T x}$, while for a minimal flow $(X,T)$ we have $\M
x=\overline{T x}=X$ for every $x\in X$.
%A necessary and sufficient condition for $x$ to be minimal is that $ux=x$ for some $u\in J$.

\begin{prop}\cite[Chapter 6]{Au88}\label{prop-minimal-point}
Let $(X, T)$ be a flow and $x\in X$.
A necessary and sufficient condition for $x$ to be minimal is that $ux=x$ for some $u\in J$.
\end{prop}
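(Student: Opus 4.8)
The plan is to derive both implications from one structural fact: \emph{for every $x\in X$, the set $\M x$ is a minimal subflow of $X$}. Take $\M$ to be a minimal left ideal of $S_T$, and fix $y=qx\in\M x$ with $q\in\M$. Then $S_Tq$ is a nonempty left ideal of $S_T$ contained in $\M$, so $S_Tq=\M$ by minimality; using $S_Tz=\overline{Tz}$ for all $z$ together with associativity of the action, we get $\overline{Ty}=S_Ty=(S_Tq)x=\M x$. Hence every point of $\M x$ has orbit closure $\M x$, so $\M x$ is closed (being an orbit closure), $T$-invariant (also visible from $T\M\subseteq\M$), and minimal.

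For the ``only if'' direction, suppose $x$ is minimal, so $(\overline{Tx},T)=(S_Tx,T)$ is minimal. Since $\M x$ is a nonempty closed invariant subset of $S_Tx$, it must equal $S_Tx$; in particular $x\in\M x$, that is, $px=x$ for some $p\in\M$. Consider $\M_x=\{p\in\M:px=x\}$. It contains $p$, hence is nonempty; it is closed, since $p\mapsto px$ is continuous on $\M$ (the action of $S_T$, and so of $\M$, on $X$ is the canonical one, factoring through $E(X,T)$, and evaluation at $x$ is continuous on $E(X,T)\subseteq X^X$); and it is a subsemigroup, because $(pq)x=p(qx)=px=x$ whenever $p,q\in\M_x$. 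Being closed in $\M$ it is compact, and each $R_q$ with $q\in\M_x$ restricts to a continuous self-map of $\M_x$; thus $\M_x$ is an $\E$-semigroup, and by the Ellis--Numakura theorem it contains an idempotent $u$. Then $u\in J$ and $ux=x$, as wanted.

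For the ``if'' direction, suppose $ux=x$ with $u\in J\subseteq\M$. Then $x=ux\in\M x$. Since $\M x$ is a closed, $T$-invariant set containing $x$, we have $\overline{Tx}\subseteq\M x$; conversely $\M x\subseteq S_Tx=\overline{Tx}$. So $\overline{Tx}=\M x$, which is minimal by the opening fact, and therefore $x$ is a minimal point.

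The proof is essentially bookkeeping once two standard ingredients are granted: that $p\mapsto px$ is continuous on $\M$ (this is what makes $\M_x$ closed), and that $S_Tq=\M$ for $q\in\M$ (minimality of $\M$). The one point worth isolating is the decision to apply Ellis--Numakura to the stabilizer subsemigroup $\M_x$ rather than to a cyclic subsemigroup $\overline{\{p^n\}}$ built from a single $p$ with $px=x$: the cyclic route founders on the non-continuity of left multiplication in $\M$, which blocks transporting an idempotent of $\overline{\{p^n\}}$ back to an element fixing $x$, whereas $\M_x$ is visibly closed and visibly a subsemigroup. One could instead write $p\in v\M$ with $v\in J$ as in Proposition~\ref{prop-minimal-ideal} and argue $vx=x$ directly, but the route through $\M_x$ is cleaner.
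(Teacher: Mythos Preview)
The paper does not supply its own proof of this proposition; it is quoted from \cite[Chapter~6]{Au88} without argument. So there is nothing in the paper to compare against directly.

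Your proof is correct. The structural fact that $\M x$ is a minimal subflow, the reduction of the ``only if'' direction to finding an idempotent in the stabilizer $\M_x=\{p\in\M:px=x\}$, and the application of Ellis--Numakura to that closed subsemigroup are all sound; the continuity of $p\mapsto px$ you need is exactly the universal-flow property of $S_T$. One remark: the alternative route you mention at the end---write $p\in v\M$ for some $v\in J$ using Proposition~\ref{prop-minimal-ideal}(3), then $vx=v(px)=(vp)x=px=x$---is in fact the textbook argument in Auslander and is shorter, since it avoids re-invoking Ellis--Numakura (that theorem is already baked into Proposition~\ref{prop-minimal-ideal}). Your $\M_x$ route is self-contained and equally valid, but calling it ``cleaner'' undersells the one-line alternative.
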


Thus for a closed invariant subset $A$ of $X$, $JA=\{ua: u\in J, a\in A\}$ is the set of all minimal points contained in $A$.

\subsection{Hyperspace flow and circle operation}\
\medskip

Let $X$ be a compact metric space. Let $2^X$ be the collection of nonempty closed subsets of $X$
endowed with the Hausdorff topology. One may define a metric on $2^X$ as follows:
\begin{equation*}
\begin{split}
 d_H(A,B)& = \inf \{\ep>0: A\subseteq B_\ep[B], B\subseteq B_\ep[A]\}\\
 &= \max \{\max_{a\in A} d(a,B),\max_{b\in B} d(b,A)\},
\end{split}
\end{equation*}
where $d(x,A)=\inf_{y\in A} d(x,y)$ and $B_\ep [A]=\{x\in X: d(x, A)<\ep\}$.
The metric $d_H$ is called the {\em Hausdorff metric} of $2^X$.

\medskip

Let $(X,T)$ be a flow. We can induce a flow on $2^X$. The
action of $T$ on $2^X$ is given by $tA=\{ta:a\in A\}$ for each $t\in T$ and $A \in 2^X$. Then
$(2^X,T)$ is a flow and it is called the {\em hyperspace flow}.

As $(2^X, T)$ is a flow, $S_T$ acts on $2^X$ too. To avoid ambiguity we
denote the action of $S_T$ on $2^X$ by the {\em circle operation}
as follows. Let $p\in S_T$ and $D\in 2^X$, then define $p\c
D=\lim_{2^X} t_i D$ for any net $\{t_i\}_i$ in $T$ with $t_i\to p$.
Moreover,
\begin{equation*}
    p\c D=\{x\in X: \text{there are $d_i\in D$ with $x=\lim_i t_id_i$}\}
\end{equation*}
for any net $t_i\to p$ in $S_T$. We always have $pD\subseteq p\c
D$.

Note that if $A\in 2^X$ is finite and $p\in S_T$, then $pA=p\c A$.

\subsection{Almost one to one extensions and O-diagram}\
\medskip

Let $(X,T)$ and $(Y, T)$ be flows and let $\pi: X \to Y$ be a factor map.
One says that:
\begin{enumerate}
  \item $\pi$ is an {\it open extension} if it is open as a map;
  %\item $\pi$ is a {\it semi-open} extension if the image of every nonempty open set of $X$ has nonempty interior;
  \item $\pi$ is an {\it almost one to one extension}   if there exists a dense $G_\d$ set $X_0\subseteq X$ such that $\pi^{-1}(\{\pi(x)\})=\{x\}$ for any $x\in X_0$.
\end{enumerate}

%Factor maps of minimal flows are always semi-open.
The following is a well known fact about open mappings (see \cite[Appendix A.8]{Vr} for example).

\begin{thm}\label{thm-open}
	Let $\pi:(X,T)\rightarrow(Y,T)$ be a factor map of flow. Then the map
$	\pi^{-1}:Y\rightarrow 2^X, y\mapsto \pi^{-1}(y)$
	is continuous if and only if $\pi$ is open.
\end{thm}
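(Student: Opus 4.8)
The plan is to prove both implications by elementary compactness arguments, using only that $X$ and $Y$ are compact metric and that $2^X$ carries the Hausdorff metric $d_H$ (no dynamics is needed). Recall that $A_n\to A$ in $(2^X,d_H)$ amounts to two conditions: the \emph{upper} one, that for every $\ep>0$ one has $A_n\subseteq B_\ep[A]$ eventually, and the \emph{lower} one, that for every $\ep>0$ one has $A\subseteq B_\ep[A_n]$ eventually. Throughout, $B_\ep(x)$ denotes the open $\ep$-ball around $x$ and $d_Y$ the metric on $Y$.

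First I would treat the direction ``$\pi$ open $\Rightarrow$ $\pi^{-1}$ continuous''. Fix $y_n\to y$ in $Y$; the claim is $\pi^{-1}(y_n)\to\pi^{-1}(y)$ in $2^X$ (both are nonempty closed sets since $\pi$ is onto). The upper estimate uses only continuity of $\pi$: if it failed there would be $\ep>0$ and $x_{n_k}\in\pi^{-1}(y_{n_k})$ with $d(x_{n_k},\pi^{-1}(y))\ge\ep$; a convergent subsequence $x_{n_k}\to x$ would satisfy $\pi(x)=\lim y_{n_k}=y$, so $x\in\pi^{-1}(y)$, contradicting $d(x_{n_k},\pi^{-1}(y))\ge\ep$. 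For the lower estimate, suppose it failed: there are $\ep>0$, a subsequence $(n_k)$, and points $x_k\in\pi^{-1}(y)$ with $d(x_k,\pi^{-1}(y_{n_k}))\ge\ep$. Passing to a further subsequence, $x_k\to x\in\pi^{-1}(y)$, and for $k$ large $d(x_k,x)<\ep/2$, whence $\pi^{-1}(y_{n_k})\cap B_{\ep/2}(x)=\emptyset$. Here openness enters: $\pi(B_{\ep/2}(x))$ is an open neighbourhood of $\pi(x)=y$, so $y_{n_k}$ lies in it for $k$ large, i.e.\ some $x'\in B_{\ep/2}(x)$ has $\pi(x')=y_{n_k}$ — contradicting the emptiness just obtained.

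Next the converse, ``$\pi^{-1}$ continuous $\Rightarrow$ $\pi$ open''. Let $U\subseteq X$ be open and let $y=\pi(x)\in\pi(U)$ with $x\in U$; choose $\ep>0$ with $B_\ep(x)\subseteq U$. By continuity of $\pi^{-1}$ at $y$ there is $\d>0$ such that $d_Y(y,y')<\d$ implies $d_H(\pi^{-1}(y),\pi^{-1}(y'))<\ep$; in particular $\pi^{-1}(y)\subseteq B_\ep[\pi^{-1}(y')]$, so $d(x,\pi^{-1}(y'))<\ep$ and there is $x'\in\pi^{-1}(y')\cap B_\ep(x)\subseteq\pi^{-1}(y')\cap U$. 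Hence $y'=\pi(x')\in\pi(U)$, so $B_\d(y)\subseteq\pi(U)$ and $\pi(U)$ is open.

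The argument is essentially routine; the only point requiring care is the compactness reduction in the lower-semicontinuity half of the first implication, where the ``bad'' point in $\pi^{-1}(y)$ depends on $n$ and must first be replaced by a single limit point before openness can be invoked — and keeping the open/closed-ball bookkeeping consistent there. Everything else is a direct unwinding of the definitions of the Hausdorff metric and of an open map. (The cited reference \cite[Appendix A.8]{Vr} proves this in the general, possibly non-metrizable, setting via the Vietoris topology; the metric formulation above suffices under our standing assumptions.)
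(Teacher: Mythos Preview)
Your argument is correct. The paper does not actually supply a proof of this theorem; it merely states it as a well-known fact with a reference to \cite[Appendix A.8]{Vr}, so there is nothing to compare your approach against beyond what you yourself note in your parenthetical remark. Your direct metric-space argument is the standard one and is entirely adequate under the paper's standing assumption that $X$ is compact metric.
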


\medskip

Every extension of minimal flows can be lifted to an open
extension by almost one-to-one modifications (\cite{V70, AuG77} or \cite[Chapter VI]{Vr}). To be precise,
\begin{thm}
For every extension $\pi:X\rightarrow Y$ of minimal flows there
exists a commutative diagram of extensions
(called the {\em O-diagram})
\begin{equation*}
\xymatrix
{
X \ar[d]_{\pi}  &  X^* \ar[l]_{\sigma}\ar[d]^{\pi*} \\
Y &  Y^*\ar[l]^{\tau}
}
\end{equation*}

with the following properties:
\begin{enumerate}
\item[(a)]
$\sigma$ and $\tau$ are almost one-to-one;
\item[(b)]
$\pi^*$ is an open extension;
\item[(c)]
$X^*$ is the unique minimal set in $R_{\pi
\tau}=\{(x,y)\in X\times Y^*: \pi(x)=\tau (y)\}$ and $\sigma$ and
$\pi^*$ are the restrictions to $X^*$ of the projections of
$X\times Y^*$ onto $X$ and $Y^*$ respectively.
\end{enumerate}
\end{thm}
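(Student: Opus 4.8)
The plan is to carry out Veech's open (or ``O-'') modification: replace the fibres $\pi^{-1}(y)$ — which need not vary continuously — by closed subsets of $X$ that do, produced via the circle operation, and then pass to orbit closures in the hyperspace flow. First I would record the generic-continuity input: the set-valued map $\psi\colon Y\to 2^X$, $\psi(y)=\pi^{-1}(y)$, is $T$-equivariant (since $\pi^{-1}(ty)=t\pi^{-1}(y)$) and upper semicontinuous, so its set $Y_c$ of points of continuity for the Hausdorff metric is a dense $G_\d$ subset of $Y$, and by equivariance $Y_c$, hence $\pi^{-1}(Y_c)\subseteq X$, is $T$-invariant. Then I fix $y_0\in Y_c$, a point $x_0\in\pi^{-1}(y_0)$, and a minimal idempotent $u$ with $ux_0=x_0$ (so $uy_0=y_0$), and set $B_0:=u\c\pi^{-1}(y_0)\in 2^X$. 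Since the circle operation is an action of $S_T$ and $u^2=u$, we get $u\c B_0=B_0$, so $B_0$ is a minimal point of $(2^X,T)$ by Proposition \ref{prop-minimal-point}; moreover $x_0=ux_0\in B_0$ and $u\c(x_0,B_0)=(x_0,B_0)$, so $(x_0,B_0)$ is a minimal point of $(X\times 2^X,T)$. I then put $Y^*=\overline{\O(B_0,T)}$ and $X^*=\overline{\O((x_0,B_0),T)}\subseteq X\times Y^*$, take $\sigma,\pi^*$ to be the restrictions to $X^*$ of the two coordinate projections, observe that every $B\in Y^*$ lies in a single $\pi$-fibre (because $B_0\subseteq\pi^{-1}(y_0)$ and $p\c\pi^{-1}(y_0)\subseteq\pi^{-1}(py_0)$), so that $\tau\colon Y^*\to Y$, $B\mapsto$ the point of $\pi(B)$, is a factor map, and check that $X^*\subseteq R_{\pi\tau}$, whence the square commutes and $\sigma,\pi^*,\tau$ are all onto.

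For (a) and the minimality claim in (c), the key lemma is that any $B\in\overline{\psi(Y_c)}$ with $\tau(B)\in Y_c$ must equal the whole fibre $\pi^{-1}(\tau(B))$: writing $B$ as a Hausdorff-limit of fibres $\psi(z_i)$ with $z_i\to z$, one has $B\subseteq\pi^{-1}(z)\cap\pi^{-1}(\tau(B))$, so $z=\tau(B)\in Y_c$, and continuity of $\psi$ at $\tau(B)$ finishes it. Since $B_0=\lim_i\pi^{-1}(t_iy_0)$ for $t_i\to u$ and $t_iy_0\in Y_c$, we have $B_0\in\overline{\psi(Y_c)}$; the lemma then gives simultaneously that $\overline{\psi(Y_c)}$ is minimal (so $Y^*=\overline{\psi(Y_c)}$) and that $\tau^{-1}(y)=\{\pi^{-1}(y)\}$ for every $y\in Y_c$, i.e. $\tau$ is almost one-to-one. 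Pulling $Y_c$ back through $\pi$ and using that almost-one-to-oneness may be checked on a nonempty invariant $G_\d$ set gives $\sigma$ almost one-to-one. For the uniqueness in (c): an almost-one-to-one extension of minimal flows is proximal (collapse two points over $y$ by translating into $Y_c$), so $\tau$ is proximal; if $M_1,M_2$ are minimal subsets of $R_{\pi\tau}$, choose $(x,B_i)\in M_i$ with a common $x$ (possible since $p_X(M_i)=X$), pick a minimal idempotent $w$ with $w\c B_1=B_2$ (Proposition \ref{prop-proximal}), and note $w\c(x,B_i)=(wx,B_2)$ lies in both $M_i$, so $M_1=M_2$; thus the unique minimal set of $R_{\pi\tau}$ is our $X^*$.

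The last and hardest step is the openness of $\pi^*$. By Theorem \ref{thm-open} it suffices that $B\mapsto\pi^{*-1}(B)$ be continuous, and I would prove the sharper statement $X^*=\{(x,B):x\in B\in Y^*\}$, so that $\pi^{*-1}(B)=B\times\{B\}$ depends continuously on $B$ via the inclusion $Y^*\hookrightarrow 2^X$. Writing $F(B)=\{x:(x,B)\in X^*\}$, the inclusion $F(B)\subseteq B$ and the upper semicontinuity of $F$ (as $X^*$ is closed) are immediate; the content is $F(B)=B$, and I expect the main obstacle to be exactly here, because the ordinary image $pB_0$ and the circle image $p\c B_0$ genuinely differ in general, so one cannot simply permute points inside $B_0$. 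The route I have in mind: for $\tau(B)\in Y_c$ and $x\in B$, expand $x=\lim_i t_id_i$ with $d_i\in B_0$ and $t_i\to p_1$ where $p_1\c B_0=B$; write $d_i=q_ix_0$ and note $q_iy_0=y_0$ since $d_i\in\pi^{-1}(y_0)$; then $p_i:=t_iq_i$ has $p_ix_0=t_id_i\to x$ and $p_iy_0=t_iy_0\to\tau(B)\in Y_c$, so passing to a subnet $p_i\to p$ we get $px_0=x$ and $py_0=\tau(B)$, whence $p\c B_0\in Y^*$ with $\tau(p\c B_0)=py_0=\tau(B)\in Y_c$, so $p\c B_0=B$ since $\tau$ is one-to-one over $Y_c$; hence $(x,B)=(px_0,p\c B_0)\in X^*$. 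This yields $F(B)=B$ on the dense set $\tau^{-1}(Y_c)$, and upper semicontinuity of $F$ propagates it to all of $Y^*$; therefore $\pi^*$ is open. Everything outside the key lemma and this last identification is routine semigroup bookkeeping in the framework recalled above.
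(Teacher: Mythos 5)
Your proposal is correct and follows essentially the same route as the paper, which only sketches the construction (taking $y^*=u\circ\pi^{-1}(y)$, $Y^*$ and $X^*$ as orbit closures in the hyperspace, with the equivalent description via the continuity points $Y_c$ of $\pi^{-1}$) and refers to Veech, Auslander--Glasner and de Vries for the verifications. Your key lemma at continuity points, the identification $X^*=\{(x,B): x\in B\in Y^*\}$ yielding openness of $\pi^*$, and the proximality-of-$\tau$ argument for uniqueness of the minimal set are exactly the details those references supply, and your semigroup bookkeeping (including $q_iy_0=y_0$ and the subnet $p_i\to p$) is sound.
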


\medskip

We sketch the construction of these factors. Let $x \in X$, $u \in
J_{x}$ and $y=\pi (x)$. Let $y^*=u\c \pi^{-1}(y).$ One has that $y^*$ is a minimal point of $(2^X, T)$ and define $Y^*=\{p\c y^*: p \in \M\}$ as the orbit closure of $y^*$ in $2^X$
for the action of $T$. Finally $X^*=\{(px, p\c y^*)\in X\times Y^*:
p\in \M \}$, $\tau (p\c y^*)= p y$ and $\sigma ((px,p\c y^*))
=px$. It can be proved that $X^*=\{(\tilde x,\tilde y)\in X \times
Y^* : \tilde x \in \tilde y\}$.

\medskip

There is another equivalent way to get O-diagram. Let $\pi^{-1}: Y\rightarrow 2^X, y\mapsto \pi^{-1}(y)$. Then  $\pi^{-1}$ is a u.s.c. map, and the set $Y_c$ of
continuous points of $\pi^{-1}$ is a dense $G_\d$ subset of $Y$.
Let $$\widetilde{Y}=\overline{\{\pi^{-1}(y): y\in Y\}}\ \text{and}\ Y^*=\overline{\{\pi^{-1}(y): y\in Y_c\}},$$
where the closure is taken in $2^X$. It is obvious that $Y^*\subseteq \widetilde{Y}\subseteq 2^X$. Note that
for each $A\in \widetilde{Y}$, there is some $y\in Y$ such that $A\subseteq \pi^{-1}(y)$, and hence $A\mapsto y$
define a map $\tau: \widetilde{Y}\rightarrow Y$. It is easy to verify that $\tau: (\widetilde{Y},T)\rightarrow (Y,T)$
is a factor map. One can show that if $(Y,T)$ is minimal then $(Y^*,T)$ is a minimal flow and it is the unique minimal subflow in $(\widetilde{Y},T)$, and $\tau: Y^*\rightarrow Y$ is an almost one to one extension. When $Y^*$ is defined, $X^*, \sigma$ and $\pi^*$ are defined as above.

%Note the this result was given by Veech in \cite{V70}, and see also \cite{AuG77, Shoenfeld, V77} for generalizations.

\subsection{Proximal extensions and RIC-diagram}\
\medskip

Let $(X,T)$ and $(Y, T)$ be flows and let $\pi: X \to Y$ be a factor map.
One says that:
\begin{enumerate}
 %  \item $\pi$ is a {\it proximal} extension if $\pi(x_1)=\pi(x_2)$ implies $(x_1,x_2) \in P(X,T)$;
   \item $\pi$ is  a {\it distal extension}  if $\pi(x_1)=\pi(x_2)$ and $x_1\neq x_2$ implies $(x_1,x_2) \not\in P(X,T)$;
 \item $\pi$ is an {\it equicontinuous or isometric extension}  if for any $\ep >0$ there exists $\d>0$ such that $\pi(x_1)=\pi(x_2)$ and $d(x_1,x_2)<\d$ imply $d(tx_1,tx_2)<\ep$ for any $t\in T$.

  \item $\pi$ is a {\em weakly mixing extension}  if $(R_\pi, T)$ as a subflow of the product
flow $(X\times X, T)$ is transitive, where $R_\pi=\{(x_1,x_2)\in X^2: \pi(x_1)=\pi(x_2)\}$.

\end{enumerate}

Let $\pi: (X,T )\rightarrow (Y,T)$ be a factor map of minimal flows, and $x_0\in X$, $y_0=\pi(x_0)$. We say that $\pi$
is a {\em RIC} (relatively incontractible) extension if for every $y
= py_0\in Y$, $p\in\M$,
$$\pi^{-1}(y)=p\c u\pi^{-1}(y_0).$$
One can show that $\pi : X \to
Y$ is RIC if and only if it is open and for every $n \ge 1$
the minimal points are dense in the relation
$$
R^n_\pi = \{(x_1,\dots,x_n) \in X^n : \pi(x_i)=\pi(x_j),\ \forall \ 1\le i
\le j \le n\}.
$$

Note that every
distal extension is RIC, and every distal extension
is open.

\medskip

Every factor map between minimal flows can be
lifted to a RIC extension by proximal extensions (see \cite{EGS} or \cite[Chapter VI]{Vr}).

\begin{thm}\label{RIC}
Given a factor map $\pi:X\rightarrow Y$ of minimal flows, there exists a commutative diagram of factor maps (called {\em RIC-diagram})
\begin{equation*}
\xymatrix
{
X \ar[d]_{\pi}  &  X' \ar[l]_{\sigma'}\ar[d]^{\pi'} \\
Y &  Y'\ar[l]^{\tau'}
}
\end{equation*}
	such that:
	\begin{enumerate}
		\item[(a)] $\tau '$ and $\sigma'$ are proximal extensions;
		\item[(b)] $\pi '$ is a RIC extension;
		\item[(c)] $X '$ is the unique minimal set in $R_{\pi \tau'
		}=\{(x,y)\in X\times Y ': \pi(x)=\tau'(y)\}$, and $\sigma'$ and
		$\pi '$ are the restrictions to $X '$ of the projections of $X\times
		Y '$ onto $X$ and $Y '$ respectively.
	\end{enumerate}
\end{thm}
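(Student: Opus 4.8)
This is a classical structure result, and I would reconstruct the Ellis--Glasner--Shapiro argument \cite{EGS} (see also \cite[Ch.~VI]{Vr}), building $Y'$ as an inverse limit of quasifactors of $Y$ inside hyperspace flows. Two elementary facts are used throughout: the composition of two proximal extensions is proximal, and an inverse limit of proximal extensions is proximal. Both follow the same pattern --- given a pair $(a,b)$ lying over the diagonal of the base, pass to a minimal point $(a',b')$ of $\overline{T(a,b)}$, observe that at each intermediate level its image is a minimal point lying in the proximal relation, hence on the diagonal, and conclude $a'=b'$, so that $(a,b)$ is proximal. In particular, applying the O-diagram of the previous theorem lets us assume at the outset that $\pi$ is open, at the cost of almost one-to-one (hence proximal) modifications of $X$ and $Y$.

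The heart of the matter is a quasifactor step. Fix $x_0\in X$, $y_0=\pi(x_0)$ and $u\in J$ with $ux_0=x_0$; then $uy_0=y_0$, and since the diagonal action of $u\in J$ on any power $X^n$ is again an idempotent in $\M$, Proposition \ref{prop-minimal-point} shows that every tuple $(ux_1,\dots,ux_n)$ with the $x_i\in\pi^{-1}(y_0)$ is a minimal point of $X^n$. Thus $u\pi^{-1}(y_0)$ is an almost periodic subset of the fiber over $y_0$, and $u\c\pi^{-1}(y_0)\in 2^X$ is a minimal point of $(2^X,T)$, since it is fixed by $u$ under the circle operation and Proposition \ref{prop-minimal-point} applies in the flow $(2^X,T)$. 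I would then pass to a quasifactor $Y_1\subseteq 2^X$, taken as the orbit closure of a suitable such minimal-point representative of the fiber, with $\tau_1\colon Y_1\to Y$ sending $A$ to the unique $y$ with $A\subseteq\pi^{-1}(y)$, and with $X_1$ the unique minimal set in $R_{\pi\tau_1}$; as in the O-diagram one has $X_1=\{(x,A)\in X\times Y_1:x\in A\}$, so $\pi_1^{-1}(A)$ is a homeomorphic copy of $A$, the map $A\mapsto\pi_1^{-1}(A)$ is continuous, and $\pi_1$ is open by Theorem \ref{thm-open}. The role of this step is to make the minimal points dense in $R^1_{\pi_1}$; the analogous step performed with $X^n$ in place of $X$ handles $R^n$.

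Now I would iterate transfinitely, alternating O-diagram steps (to restore openness) with quasifactor steps over the various $X^n$ (to densify the minimal points in $R^n$ for larger and larger $n$), taking inverse limits at limit ordinals. Each successor is a proximal extension of the base, so by the first paragraph the whole tower $Y=Y_0\leftarrow Y_1\leftarrow\cdots$ consists of proximal extensions of $Y$; a cardinality bound forces stabilization at some ordinal, and at a stable stage $\pi'\colon X'\to Y'$ is open with minimal points dense in every $R^n_{\pi'}$, that is, RIC. Taking $X'$ to be the unique minimal subset of $R_{\pi\tau'}$ (unique because $\tau'$ is proximal) and $\sigma',\pi'$ the coordinate projections, $\sigma'$ is proximal: if $\sigma'(x_1')=\sigma'(x_2')=x$ then $x_i'=(x,y_i')$ with $(y_1',y_2')\in R_{\tau'}\subseteq P(Y')$, and since the first coordinates agree this forces $(x_1',x_2')\in P(X')$.

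The main obstacle is to make the quasifactor steps genuinely effective: one must choose the representative sets so that the map down to $Y$ stays proximal while the density of minimal points in the relations $R^n$ strictly increases, run the termination argument, and verify that the stabilized extension is RIC rather than merely open with density in $R^1$. This is exactly where the real work of \cite{EGS} (or \cite[Ch.~VI]{Vr}) lies, and I would import that construction rather than redo it in full.
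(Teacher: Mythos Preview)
Your outline is headed in a different direction from the paper (and from the actual EGS construction that both you and the paper cite). The paper does not iterate at all: it gives a \emph{one-step} construction. Fix $x\in X$, $u\in J_x$, $y=\pi(x)$, and set
\[
y' \;=\; u \circ \big(u\,\pi^{-1}(y)\big)\in 2^X,
\]
the circle operation applied to the pointwise image $u\pi^{-1}(y)$. Then $Y'=\overline{\O}(y',T)=\{p\circ y':p\in\M\}$, $X'=\{(px,p\circ y'):p\in\M\}=\{(\tilde x,\tilde y)\in X\times Y':\tilde x\in\tilde y\}$, and $\tau',\sigma',\pi'$ are the obvious maps. That single quasifactor already yields $\pi'$ RIC; no transfinite tower, no alternation with O-diagrams, no stabilization argument is needed.

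The crucial point you miss is the \emph{double} appearance of $u$: the O-diagram uses $u\circ\pi^{-1}(y)$, whereas the RIC-diagram uses $u\circ u\pi^{-1}(y)$. Passing first to the almost periodic set $u\pi^{-1}(y)$ before taking the circle closure is exactly what forces, in one stroke, the fibers of $\pi'$ to be of the form $p\circ u\pi^{-1}(y)$ and hence the minimal points to be dense in every $R^n_{\pi'}$. Your quasifactor step hedges (``a suitable such minimal-point representative'') and then proposes to recover density in $R^n$ by iterating over $n$; with the correct choice of base point this iteration is unnecessary.

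Your transfinite scheme is not wrong in spirit---towers of this type do appear in the general structure theory---but here it is over-engineered, and the termination and effectiveness claims you flag as ``the main obstacle'' are precisely the steps that the one-shot construction avoids. Since the paper itself only sketches the construction and cites \cite{EGS,Vr} for details, the appropriate fix is simply to replace your iteration by the direct definition $y'=u\circ u\pi^{-1}(y)$ and point to those references for the verification that $\tau',\sigma'$ are proximal and $\pi'$ is RIC.
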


We sketch the construction of these factors. Let $x\in
X$, $u\in J_{x}$ and $y=\pi (x)$. Let $y'=u \c u \pi^{-1}(y)$,
then $y'$ is a minimal point in $2^X$.
Define $Y'=\{p\c y': p\in \M\}$ to be the orbit closure of $y'$
and $X'=\{(px, p\c y')\in X\times Y': p\in \M\}$, and factor maps
given by $\tau' (p\c y')= p y$ and $\sigma'((px,p\c y')) =px$. It
can be proved that $X'=\{(\tilde x,\tilde y)\in X \times Y' :
\tilde x \in \tilde y\}$.

%\medskip

%If $\pi:(X,T)\rightarrow (Y,T)$ is a RIC factor map of minimal flows, then $Q_\pi(X,T)$ is an invariant closed equivalence relation and this relation defines the {\em maximal equicontinuous factor relative to $\pi$},  $X^{eq}_\pi=X/Q_\pi(X,T)$ of $(X,T)$. $\pi$ is weakly mixing if and only if $Q_\pi(X,T)=R_\pi$ \cite{V77}.

\section{almost proximal extensions}\label{section-ap}

In this section, we introduce almost proximal extensions. We will give the structure of almost proximal extensions, and study its relationship with almost finite to one extensions.

\subsection{Almost proximal extensions}\
\medskip

An {\em almost periodic set} for $(X,T)$ is subset $A$ of $X$ such that if $z\in X^{|A|}$ with ${\rm range} (z)=A$, then $z$ is a minimal point of the flow $(X^{|A|},T)$. ($|A|$ denotes the cardinality of $A$.) For example, a finite set $A=\{x_1,x_2,\ldots, x_n\}$ is an almost periodic set if and only if $(x_1,x_2,\ldots, x_n)$ is a minimal point of $(X^n,T)$. Using the basis for the Tychonoff topology, we see that a set $A$ is an almost periodic
set if and only if every finite subset of $A$ is an almost periodic set. The notion of almost periodic set was introduced by Auslander, refer to \cite[Chpter 5]{Au88} for more information.

\begin{de}
Let $\pi: (X,T)\rightarrow (Y,T)$ be an extension of minimal flows. $\pi$ is called an {\em almost proximal extension} if for each $y\in Y$, there is some $N\in \N$ such that the cardinality of any almost periodic subset in the fiber $\pi^{-1}(y)$ is not greater than $N$.
\end{de}

Let $A$ be an almost periodic set of $(X,T)$. Then for each $z\in X^{|A|}$ with ${\rm range} (z)=A$, $z$ is a minimal point of $(X^{|A|},T)$. By Proposition \ref{prop-minimal-point}, there is some $u\in J$ such that $uz=z$. It follows that $uA=\{ua: a\in A\}=A$. Thus, for a subset $Z$ of $X$, a subset $A\subseteq Z$ is an almost periodic subset of $Z$ if and only if $A\subseteq uZ$ for some $u\in J$.

\begin{lem}\label{lem-number}
Let $\pi: (X,T)\rightarrow (Y,T)$ be an extension of minimal flows. Let $y, y' \in Y$, $u \in J_y$ and $v \in J_{y'}$.
Then $|u\pi^{-1}(y)|=|v\pi^{-1}(y')|$.%, where $|\cdot |$ means the cardinality.
\end{lem}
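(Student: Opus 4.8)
\medskip

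\noindent\textbf{Proof proposal.} The plan is to build an explicit bijection $u\pi^{-1}(y)\to v\pi^{-1}(y')$ implemented by multiplication by two suitably chosen elements of $\M$; taking $y=y'$ will simultaneously show that $|u\pi^{-1}(y)|$ does not depend on the choice of $u\in J_y$. First I would record the two elementary facts that will do all the work: (i) if $a\in u\pi^{-1}(y)$ then $a=ua$ (write $a=ua_0$ and apply $u$), and likewise $b=vb$ for $b\in v\pi^{-1}(y')$; (ii) $pw=p$ for every $p\in\M$ and $w\in J$ (Proposition \ref{prop-minimal-ideal}(1)), so in particular $uv=u$, $u^2=u$, $v^2=v$.

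Since $Y$ is minimal, $\M y=Y$, so I may pick $p_0\in\M$ with $p_0y=y'$ and then set $p=vp_0$. Then $py=v(p_0y)=vy'=y'$, while $vp=v(vp_0)=vp_0=p$, hence $v(pu)=(vp)u=pu$, i.e. $pu\in v\M$. Since $v\M$ is a group with identity $v$ (Proposition \ref{prop-minimal-ideal}(2)), let $q$ be the inverse of $pu$ in $v\M$, so that $q(pu)=(pu)q=v$ and $vq=q$. The clever point here — and the part I expect to be the only real obstacle — is precisely this choice of multiplier: one must not merely take some $p\in\M$ with $py=y'$, but arrange $p$ in the form $vp_0$ so that $pu$ lands inside the concrete group $v\M$ whose inversion is available; and one must keep straight that an idempotent is only a right identity in $\M$ and fixes only its own base point, so the asymmetric roles of $u,v$ and of $y,y'$ have to be tracked.

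With $p$ and $q$ in hand, I define $\phi(a)=pa$ for $a\in u\pi^{-1}(y)$ and $\psi(b)=uqb$ for $b\in v\pi^{-1}(y')$, and the verification should run purely mechanically from (i) and (ii). For well-definedness of $\phi$: using $a=ua$ one has $pa=(pu)a$, so $\pi(pa)=py=y'$ and $v(pa)=(v(pu))a=(pu)a=pa$, whence $\phi(a)\in v\pi^{-1}(y')$. For $\psi$: since $y'=py=p(uy)=(pu)y$ we get $qy'=q(pu)y=vy$, so $\pi(uqb)=u(qy')=u(vy)=(uv)y=uy=y$ and $u(uqb)=uqb$, whence $\psi(b)\in u\pi^{-1}(y)$. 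Finally $\psi(\phi(a))=uq\big((pu)a\big)=u\big(q(pu)\big)a=uva=(uv)a=ua=a$ and $\phi(\psi(b))=p(uqb)=(pu)(qb)=\big((pu)q\big)b=vb=b$, so $\phi$ and $\psi$ are mutually inverse. Hence $|u\pi^{-1}(y)|=|v\pi^{-1}(y')|$. Everything after the choice of $p_0$ and $p=vp_0$ is bookkeeping with $pw=p$ ($w\in J$) and the idempotent relations, so I would present it compactly.
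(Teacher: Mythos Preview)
Your argument is correct, and it is a genuinely different route from the paper's. The paper does not build a bijection: it fixes $p\in\M$ with $px_1=x_1'$, observes that $vpx_1,\dots,vpx_n\in v\pi^{-1}(y')$, and argues these images are pairwise distinct because $(x_1,\dots,x_n)$ is a minimal point of $X^n$ (so no element of $\M$ can collapse two coordinates). This gives $n\le m$; a symmetric argument gives $m\le n$. By contrast you use the group structure of $v\M$ (Proposition~\ref{prop-minimal-ideal}) to manufacture an explicit two-sided inverse $q=(pu)^{-1}$ to the multiplier $pu$, yielding mutually inverse maps $a\mapsto pa$ and $b\mapsto uqb$. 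Your approach is a bit more algebraic and has the pleasant feature of treating the finite and infinite cases uniformly (the paper writes out only the finite case and asserts that the same proof works in general, which of course it does via Schr\"oder--Bernstein); the paper's approach avoids the group machinery entirely and stays within the language of minimal tuples. Both are short, but they lean on different pieces of the background: yours on Proposition~\ref{prop-minimal-ideal}, the paper's on the preservation of distinctness under $\M$-translates of minimal points.
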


\begin{proof}
We only show the case when $u\pi^{-1}(y), v\pi^{-1}(y')$ are finite. The same proof works for general case.
Let $u\pi^{-1}(y)=\{x_1,x_2,\ldots, x_n\}$ and $v\pi^{-1}(y')=\{x_1',x_2',\ldots, x_m'\}$
for some $n,m\in \N$. Then $(x_1,x_2,\ldots, x_n)\in X^n$ is a minimal point of $(X^n, T)$ as $u(x_1,x_2,\ldots, x_n)=(x_1,x_2,\ldots, x_n)$. Since $(X,T)$ is minimal, there is $p\in \M$ such that $x_1'=px_1$. Note that $x_1'=vx_1'=vpx_1$ and $x_1,x_2,\ldots, x_n\in \pi^{-1}(y)$. It follows that
$$y'=\pi(x_1')=\pi(vpx_1)=\ldots =\pi(vpx_n)=vp\pi(x_1)=vpy.$$
Since $x_1,x_2,\ldots,x_n$ are distinct and $(x_1,x_2,\ldots, x_n)$ is minimal, $vpx_1, vpx_2, \ldots, vpx_n$ are also distinct. As
$$vpx_1, vpx_2, \ldots, vpx_n \in v\pi^{-1}(y')=\{x_1',x_2',\ldots, x_m'\},$$
we have $n\le m$. Similarly, we have $m\le n$. Thus
$|u\pi^{-1}(y)|=|v\pi^{-1}(y')|$.
\end{proof}

By Lemma \ref{lem-number} and the fact that $A$ is an almost periodic subset of subset $Z$ if and only if $A\subseteq uZ$ for some $u\in J$, we have the following proposition readily.

\begin{prop}\label{a.p equivalenve}
Let $\pi: (X,T)\rightarrow (Y,T)$ be an extension of minimal flows. Then the following are equivalent:
\begin{enumerate}
  \item $\pi$ is {almost proximal}.
 % \item For some $y\in Y$, there is some finite subset $F$ of $\pi^{-1}(y)$ such that each point of $\pi^{-1}(y)$ is proximal to some point of $F$.
  \item For some $y\in Y$, there is some $N\in \N$ such that the cardinality of any almost periodic subset in the fiber $\pi^{-1}(y)$ is not greater than $N$.
 % \item For each $y\in Y$, there is some $N\in \N$ such that the cardinality of any almost periodic subset in the fiber $\pi^{-1}(y)$ is not greater than $N$.
  \item For some $y\in Y$ and $u\in J_y$, $|u\pi^{-1}(y)|=N<\infty$.
  \item For each $y\in Y$ and $u\in J_y$, $|u\pi^{-1}(y)|=N<\infty$.
\end{enumerate}

When $N=1$, $\pi$ is proximal.
\end{prop}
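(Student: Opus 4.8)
The plan is to prove the cycle of implications $(1)\Rightarrow(4)\Rightarrow(3)\Rightarrow(2)\Rightarrow(1)$, leaning on Lemma \ref{lem-number} and the characterization, noted just before the proposition, that $A$ is an almost periodic subset of a set $Z$ if and only if $A\subseteq uZ$ for some $u\in J$.

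First I would dispose of the easy links. The implication $(4)\Rightarrow(3)$ is trivial, and $(3)\Rightarrow(2)$ is immediate once we observe that, by the cited characterization, every almost periodic subset $A$ of $\pi^{-1}(y)$ satisfies $A\subseteq w\pi^{-1}(y)$ for some $w\in J$; then $w$ maps $\pi^{-1}(y)$ into itself (it fixes $y$ since $y$ is a minimal point, so $w\pi^{-1}(y)\subseteq \pi^{-1}(wy)=\pi^{-1}(y)$), and by Lemma \ref{lem-number} $|w\pi^{-1}(y)|=|u\pi^{-1}(y)|=N$, giving $|A|\le N$. For $(2)\Rightarrow(1)$ the same bound must be transported to every fiber: given $y'\in Y$ and an almost periodic subset $A'\subseteq\pi^{-1}(y')$, pick $v\in J$ with $A'\subseteq v\pi^{-1}(y')$ and $vy'=y'$; Lemma \ref{lem-number} gives $|v\pi^{-1}(y')|=|u\pi^{-1}(y)|$ for a fixed $u\in J_y$, and the right side is finite (of size $N$, say) because the fiber over $y$ carries almost periodic subsets of size exactly $|u\pi^{-1}(y)|$, hence bounded by $N$, hence finite. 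So $|A'|\le N$ for all $y'$, which is $(1)$ with the uniform constant $N$.

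The implication $(1)\Rightarrow(4)$ is where a small argument is needed. Fix any $y\in Y$ and $u\in J_y$. The set $u\pi^{-1}(y)$ is an almost periodic subset of $\pi^{-1}(y)$: every finite subset of it lies in the range of a minimal point of some $(X^k,T)$ because $u$ fixes each coordinate, so by the Tychonoff-basis remark in the paper $u\pi^{-1}(y)$ is an almost periodic set. By hypothesis $(1)$ there is $N$ bounding the cardinality of every almost periodic subset of every fiber, so $|u\pi^{-1}(y)|\le N<\infty$; call this common value $N$ (common across all $(y,u)$ by Lemma \ref{lem-number}). That is exactly $(4)$.

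Finally the parenthetical claim: when $N=1$, $\pi$ is proximal. If $|u\pi^{-1}(y)|=1$ for every $y$ and $u\in J_y$, take any $x_1,x_2\in\pi^{-1}(y)$ with $y=\pi(x_1)=\pi(x_2)$. Choose $u\in J$ with $ux_1=x_1$ (possible by Proposition \ref{prop-minimal-point} since $x_1$ is a minimal point of the minimal flow $X$); then $u\in J_y$ and $ux_2\in u\pi^{-1}(y)=\{x_1\}$, so $ux_2=x_1$. By Proposition \ref{prop-proximal}(1) the pair $(x_2,ux_2)=(x_2,x_1)$ lies in $P(X,T)$, so any two points in a common fiber are proximal and $\pi$ is proximal. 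Conversely a proximal $\pi$ forces $|u\pi^{-1}(y)|=1$: distinct points in $u\pi^{-1}(y)$ would be both proximal and (being coordinates of a minimal point) distal, a contradiction. I expect no genuine obstacle here; the one point demanding care is the bookkeeping with idempotents—making sure the chosen $u,v,w\in J$ actually fix the relevant base points $y,y'$ so that they preserve the fibers—which is handled by Proposition \ref{prop-minimal-point} together with $uy=y$ whenever $y$ is minimal and $u\in J_y$.
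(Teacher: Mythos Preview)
Your argument is correct and is exactly the expansion the paper has in mind: the paper's own proof is the single line ``By Lemma~\ref{lem-number} and the fact that $A$ is an almost periodic subset of $Z$ if and only if $A\subseteq uZ$ for some $u\in J$, we have the following proposition readily,'' and your cycle $(1)\Rightarrow(4)\Rightarrow(3)\Rightarrow(2)\Rightarrow(1)$ fills in precisely those details. One small slip to fix: in $(3)\Rightarrow(2)$ the justification ``$w$ fixes $y$ since $y$ is a minimal point'' is not valid as stated (an arbitrary $w\in J$ need not fix an arbitrary minimal point); the correct reason is that a nonempty $A\subseteq \pi^{-1}(y)\cap w\pi^{-1}(y)\subseteq \pi^{-1}(y)\cap \pi^{-1}(wy)$ forces $wy=y$, so $w\in J_y$ after all.
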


For $x\in X$, $P[x]=\{x'\in X: (x,x')\in P(X)\}$ is called the {\em proximal cell} of $x$. It is clear that $\pi$ is proximal if and only if for any $y\in Y$ and any $x\in \pi^{-1}(y)$, $\pi^{-1}(y)\subseteq P[x]$.

\begin{cor}
Let $\pi: (X,T)\rightarrow (Y,T)$ be an extension of minimal flows. If $\pi$ is {almost proximal}, then for each $y\in Y$, there is some finite subset $F$ of $\pi^{-1}(y)$ such that each point of $\pi^{-1}(y)$ is proximal to some point of $F$, i.e., $\pi^{-1}(y)\subseteq \bigcup_{x\in F}P[x]$.
\end{cor}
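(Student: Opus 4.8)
The plan is to take $F$ to be the ``reduction'' of the fiber $\pi^{-1}(y)$ by a minimal idempotent, and then invoke Proposition \ref{prop-proximal}(1) to see that every point of the fiber is proximal to its image under that idempotent. Concretely, fix $y\in Y$. Since $(Y,T)$ is minimal, $y$ is a minimal point, so by Proposition \ref{prop-minimal-point} the set $J_y=\{u\in J: uy=y\}$ is non-empty; choose $u\in J_y$. Set $F=u\pi^{-1}(y)=\{ux: x\in\pi^{-1}(y)\}$.

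First I would check that $F$ is a finite subset of $\pi^{-1}(y)$. For any $x\in\pi^{-1}(y)$ we have $\pi(ux)=u\pi(x)=uy=y$, since $\pi$ intertwines the $T$-action and hence the $\M$-action; thus $F\subseteq\pi^{-1}(y)$. Finiteness is exactly the content of the implication (1)$\Rightarrow$(4) in Proposition \ref{a.p equivalenve}: because $\pi$ is almost proximal, $|u\pi^{-1}(y)|=N<\infty$, so $|F|\le N$.

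Next, for an arbitrary $x\in\pi^{-1}(y)$, the point $ux$ lies in $F$ by definition, and by Proposition \ref{prop-proximal}(1) the pair $(x,ux)$ is proximal, i.e. $x\in P[ux]$. Hence $x\in\bigcup_{x'\in F}P[x']$. As $x\in\pi^{-1}(y)$ was arbitrary, this gives $\pi^{-1}(y)\subseteq\bigcup_{x'\in F}P[x']$, which is the desired conclusion.

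I do not expect a genuine obstacle here: the statement is essentially a repackaging of Proposition \ref{a.p equivalenve}(4) together with the basic fact that $x$ and $ux$ are proximal for a minimal idempotent $u$. The only points requiring a word of care are that the chosen $F$ actually sits inside the fiber (which uses $uy=y$) and that its finiteness is supplied by the almost proximality hypothesis via Proposition \ref{a.p equivalenve}.
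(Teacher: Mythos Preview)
Your proposal is correct and follows essentially the same route as the paper: choose $u\in J_y$, set $F=u\pi^{-1}(y)$, use Proposition~\ref{a.p equivalenve} for finiteness, and use $(x,ux)\in P(X)$ to cover the fiber. You simply spell out a couple of details (that $F\subseteq\pi^{-1}(y)$ and that $J_y\neq\emptyset$) which the paper leaves implicit.
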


\begin{proof}
Let $y\in Y$ and $u\in J_y$. Then $F=u\pi^{-1}(y)$ is finite by Proposition \ref{a.p equivalenve}. For any $x\in \pi^{-1}(y)$, $(x,ux)\in P(X)$ and $ux\in F$. The proof is complete.
\end{proof}

%The definition does not depend on the choice of $y \in Y$ and $u\in J_y$:

Let $\pi:(X,T)\rightarrow (Y,T)$ be an extension  of flows $(X,T)$ and $(Y,T)$ and $n\in \N$.
Let $$2^X_\pi=\{A\in 2^X: A\subseteq \pi^{-1}(y) \ \text{for some}\ y\in Y \},$$
and
$$2^X_{\pi, n}=\{A\in 2^X_\pi: |A|\le n\}, \ \text{and}\
 2^X_{\pi,*}=\{A\in 2^X_\pi: |A|< \infty\}=\bigcup_{n=1}^\infty 2^X_{\pi, n}.$$
It is clear that $(2^X_\pi, T)$, $(2^X_{\pi,n}, T)$ and $(2^X_{\pi,*}, T)$ are subflows of $(2^X,T)$.
Let
$$\widetilde{\pi}: (2^X_\pi, T) \rightarrow (Y,T), \ A\mapsto \pi(A).$$
Then $\widetilde{\pi}$ is an extension. Note that
$2^X_{\pi,1}=\{\{x\}\in 2^X: x\in X\}$ is isomorphic to $X$ and $\widetilde{\pi}|_{2^X_{\pi,1}}$ is the same to $\pi$.
Thus $\pi: (X,T) \rightarrow (Y,T)$ is proximal if and only if $\widetilde{\pi}|_{2^X_{\pi,1}}: (2^X_{\pi,1}, T)\rightarrow (Y,T)$ is proximal.

\begin{prop}
Let $\pi:(X,T)\rightarrow (Y,T)$ be an extension of minimal flows. Then $\pi$ is almost proximal if and only if there is some $n\in \N$ such that there are no minimal points in $2^X_{\pi,*}\setminus 2^X_{\pi, n}$.
\end{prop}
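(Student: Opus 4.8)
The plan is to characterize "almost proximal" through the invariant $N = |u\pi^{-1}(y)|$ established in Proposition \ref{a.p equivalenve}, and to match this $N$ with the threshold $n$ in the statement. Recall that a subset $A\subseteq X$ is an almost periodic set if and only if $A\subseteq uX$ for some $u\in J$, equivalently (as noted in the text) $A$ is a minimal point of the hyperspace flow $(2^X,T)$: indeed $uA=A$ means $A$ is fixed by an idempotent, which by Proposition \ref{prop-minimal-point} applied to $(2^X,T)$ is exactly the condition for $A\in 2^X$ to be a minimal point. Combined with the observation that the minimal points of $2^X_\pi$ are precisely those $A\in 2^X_\pi$ that are almost periodic sets contained in some fiber, this identifies the minimal points of $2^X_{\pi,*}$ with the finite almost periodic subsets of fibers of $\pi$.

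First I would prove the forward direction. Suppose $\pi$ is almost proximal, and let $N=|u\pi^{-1}(y)|<\infty$ as in Proposition \ref{a.p equivalenve}(4). Set $n=N$. I claim there are no minimal points in $2^X_{\pi,*}\setminus 2^X_{\pi,n}$. If $A$ were such a minimal point, then $A$ is a finite almost periodic subset of some fiber $\pi^{-1}(y')$ with $|A|>n=N$. But then, writing $A\subseteq v\pi^{-1}(y')$ for a suitable $v\in J_{y'}$ (using that an almost periodic set is fixed by an idempotent, and that idempotent can be taken in $J_{y'}$ since $A$ lies in $\pi^{-1}(y')$), we get $|A|\le |v\pi^{-1}(y')| = N$ by Lemma \ref{lem-number}, a contradiction.

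For the converse, suppose there is some $n\in\N$ with no minimal points in $2^X_{\pi,*}\setminus 2^X_{\pi,n}$. Pick any $y\in Y$ and $u\in J_y$, and suppose toward a contradiction that $|u\pi^{-1}(y)|>n$ (this includes the possibility that $u\pi^{-1}(y)$ is infinite). Choose a finite subset $A\subseteq u\pi^{-1}(y)$ with $n<|A|<\infty$. Since $uA = A$ (as $A\subseteq uX$ and $u$ is idempotent) and $A\subseteq \pi^{-1}(y)$, the set $A$ is a finite almost periodic subset of the fiber, hence a minimal point of $2^X_{\pi,*}$ with $|A|>n$, i.e., a minimal point in $2^X_{\pi,*}\setminus 2^X_{\pi,n}$ — contradiction. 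Therefore $|u\pi^{-1}(y)|\le n<\infty$ for every $y$ and every $u\in J_y$, so $\pi$ is almost proximal by Proposition \ref{a.p equivalenve}(4).

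The only genuinely delicate point is the identification of minimal points of the hyperspace subflows with finite almost periodic subsets of fibers, together with the bookkeeping that the fixing idempotent for $A\subseteq\pi^{-1}(y)$ can be chosen in $J_y$ (so that Lemma \ref{lem-number} applies with the right $y$); once that is in place the rest is a short two-way argument using Lemma \ref{lem-number} and Proposition \ref{a.p equivalenve}. I expect no serious obstacle beyond being careful that "$A\subseteq uX$ for some $u\in J$'' and "$A=uA$ for some $u\in J_y$ when $A\subseteq\pi^{-1}(y)$'' are used consistently, which follows because $uA=A$ forces $u$ to fix $\pi(a)=y$ for $a\in A$.
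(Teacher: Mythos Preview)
Your proposal is correct and follows essentially the same route as the paper's proof: both directions hinge on the fact that a finite $A\in 2^X_\pi$ is a minimal point of the hyperspace flow iff $uA=A$ for some idempotent $u\in J$, combined with the cardinality constraint $|u\pi^{-1}(y)|=N$ from Proposition~\ref{a.p equivalenve} and Lemma~\ref{lem-number}. The paper's forward direction is phrased contrapositively (for $A$ with $|A|>n$ it shows $uA\neq A$ for every $u\in J_y$), while you instead assume $A$ minimal and derive $|A|\le N$ via $A\subseteq v\pi^{-1}(y')$; these are the same argument, and your care in checking that the fixing idempotent lies in $J_{y'}$ (since $vA=A\subseteq\pi^{-1}(y')$ forces $vy'=y'$) is exactly the bookkeeping the paper leaves implicit.
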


\begin{proof}
If $\pi$ is almost proximal, then there is some $n\in \N$ such that $|u\pi^{-1}(y)|=n$ for all $y\in Y, u\in J_y$. Let $A\in 2^X_{\pi, *} \setminus 2^X_{\pi, n}$ and $y=\pi(A)$. Then $|A|\ge n+1$. For each $u\in J_y$, $|uA|\le n$ since $\pi$ is almost proximal. Thus $u\circ A=uA\neq A$, in particular, by Proposition \ref{prop-minimal-point}, $A$ is not minimal.

\medskip
Conversely, assume that there is some $n\in \N$ such that there are no minimal points in $2^X_{\pi,*}\setminus 2^X_{\pi, n}$. If $\pi$ is not almost proximal. Then there is some $y\in Y$ and $u\in J_y$ such that $|u\pi^{-1}(y)|=\infty$.
We choose $A\subseteq u\pi^{-1}(y)$ with $|A|= n+1$. Then $$u\circ A=uA=A,$$
which means $A$ is a minimal point of $2^X_{\pi,*}\setminus 2^X_{\pi, n}$, a contradiction!
The proof is complete.
\end{proof}

\begin{rem}
It is well known that each flow has a minimal subflow. Since $2^X_{\pi,*}\setminus 2^X_{\pi, n}$ may be not compact, it is an invariant subset of $2^X$ but maybe not subflow of $2^X$. So it is possible that $2^X_{\pi,*}\setminus 2^X_{\pi, n}$ contains no minimal points.
\end{rem}

\subsection{Almost finite to one extensions}\label{subsec-finite-one}\
\medskip

\begin{de}
Let $\pi:(X,T)\rightarrow (Y,T)$ be an extension of minimal flows. $\pi$ is called an {\em almost finite to one} extension if  some fiber is finite.
\end{de}

\begin{prop}\label{almost N-1}\cite{Shoenfeld, HLSY}
Let $\pi:(X,T)\rightarrow (Y,T)$ be an extension of minimal flows.
The following statements are equivalent:
\begin{enumerate}
  \item $\pi$ is almost finite to one, i.e., some fiber is finite;
  \item There exists $N\in \N$ such that $Y_0=\{y\in Y: |\pi^{-1}(y)|=N\}$ is a residual subset of $Y$;
  \item There exist $N\in \N$ and $y_0\in Y$ such that $|\pi^{-1}(y_0)|=N$ and $\pi^{-1}(y_0)$ is an almost periodic set.
  \item The cardinality of each minimal point of $(2^X,T)$ in $2^X_{\pi}$ is not greater than some fixed integer $N$.
\end{enumerate}
\end{prop}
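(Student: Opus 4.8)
The plan is to prove the cycle (1)$\Rightarrow$(4)$\Rightarrow$(3)$\Rightarrow$(2)$\Rightarrow$(1), relying on the O-diagram and on the enveloping-semigroup machinery recalled in Section \ref{section-tds}.

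\emph{Step (1)$\Rightarrow$(3).} Suppose $\pi^{-1}(y_1)$ is finite for some $y_1$. Pick $u\in J_{y_1}$; by Proposition \ref{prop-minimal-point}, $u\pi^{-1}(y_1)$ is an almost periodic set (it equals $uZ$ for $Z=\pi^{-1}(y_1)$). Since $\pi^{-1}(y_1)$ is finite, so is $u\pi^{-1}(y_1)$, say of cardinality $N$. By Lemma \ref{lem-number}, $|u'\pi^{-1}(y)| = N$ for every $y\in Y$ and $u'\in J_y$. Now it remains to produce a \emph{single} fiber $\pi^{-1}(y_0)$ that is itself almost periodic of cardinality exactly $N$. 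For this I would pass to the O-diagram: let $\pi^*:X^*\to Y^*$ be the open extension over the almost one-to-one modifications $\sigma,\tau$. Over the dense $G_\delta$ of $Y$ where $\pi^{-1}$ is continuous, each fiber has the same cardinality; combining continuity of $y^*\mapsto \pi^{-1}(y)$ on this set with the finiteness just obtained, one finds a point $y_0$ in that residual set whose fiber has $N$ elements and, being the continuity point of $\pi^{-1}$, is a minimal point of $(2^X,T)$, hence an almost periodic set. (Equivalently: $u\pi^{-1}(y_1)$ together with minimality of $Y$ produces, by pushing forward via a suitable $p\in\M$ landing in the continuity set, the required $y_0$.)

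\emph{Step (3)$\Rightarrow$(4).} Assume $\pi^{-1}(y_0)$ is an almost periodic set with $|\pi^{-1}(y_0)|=N$. Then $\pi^{-1}(y_0)$ is a minimal point of $(2^X,T)$ contained in $2^X_\pi$, and its orbit closure is a minimal subflow $\mathcal{Y}^*\subseteq 2^X_\pi$ with a factor map $\tau:\mathcal{Y}^*\to Y$. For any minimal point $A\in 2^X_\pi$, write $y=\pi(A)=\tau(A)$ and use minimality of $Y$ to pick $p\in\M$ with $p y_0 = y$; then $p\c \pi^{-1}(y_0)\in \mathcal{Y}^*$ lies over $y$ and contains $p\,\pi^{-1}(y_0)$. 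Since any almost periodic subset of $\pi^{-1}(y)$ has cardinality $\le N$ by Proposition \ref{a.p equivalenve} (which is already available once we know $|u\pi^{-1}(y_0)|\le N$), and $A$ being minimal is an almost periodic set, we get $|A|\le N$. The point requiring a little care is to compare $|p\c\pi^{-1}(y_0)|$ with $|\pi^{-1}(y_0)|$ — here one uses that for a finite set $pA = p\c A$, so no new points appear, and minimality keeps the cardinality from dropping.

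\emph{Step (4)$\Rightarrow$(2).} Let $N$ bound the cardinality of every minimal point of $(2^X,T)$ in $2^X_\pi$. Fix $u\in J_y$ for some (hence every) $y$; then $u\pi^{-1}(y)$ is a minimal point in $2^X_\pi$, so $|u\pi^{-1}(y)|\le N$, i.e.\ $\pi$ is almost proximal, and by Proposition \ref{a.p equivalenve} the value $|u\pi^{-1}(y)|$ is a constant $N'\le N$ independent of $y$. On the dense $G_\delta$ set $Y_c$ of continuity points of $\pi^{-1}:Y\to 2^X$, the map $y\mapsto\pi^{-1}(y)$ is continuous, hence $\pi^{-1}(y)$ is a minimal point of $(2^X,T)$ for $y\in Y_c$ (its orbit closure is the minimal flow $Y^*$ of the O-diagram), so $|\pi^{-1}(y)|\le N$ there; on the other hand $\pi^{-1}(y)\supseteq u\pi^{-1}(y)$ always, and for $y\in Y_c$ upper semicontinuity forces $\pi^{-1}(y)=u\pi^{-1}(y)$ since the latter cannot be a proper closed subset of a fiber that is itself a limit of nearby fibers — more directly, $\pi^{-1}(y)$ minimal and of size $\le N$ means $\pi^{-1}(y) = u\pi^{-1}(y)$ has exactly $N'$ points. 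Thus $Y_0=\{y: |\pi^{-1}(y)|=N'\}\supseteq Y_c$ is residual. Finally (2)$\Rightarrow$(1) is immediate since a residual set is nonempty.

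The main obstacle I anticipate is Step (1)$\Rightarrow$(3): converting ``\emph{some} fiber is finite'' into ``\emph{some} fiber is a finite almost periodic set of the right cardinality.'' A priori the finite fiber $\pi^{-1}(y_1)$ need not be an almost periodic set (it may properly contain $u\pi^{-1}(y_1)$), and the bound from Lemma \ref{lem-number} only controls the almost-periodic part. The resolution is to move to a continuity point of $\pi^{-1}$ via the O-diagram (or to push $u\pi^{-1}(y_1)$ forward by an element of $\M$ landing in $Y_c$), using that over $Y_c$ the fiber coincides with its almost-periodic part and is genuinely minimal in the hyperspace; all other implications are routine applications of the results quoted above.
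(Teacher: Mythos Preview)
The paper does not include a proof of this proposition; it is quoted from the references \cite{Shoenfeld, HLSY}. So there is no in-paper argument to compare yours against. Evaluated on its own, your strategy via the O-diagram, the continuity set $Y_c$, and Lemma~\ref{lem-number} is the standard one and is essentially correct, but two steps need tightening.

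In (3)$\Rightarrow$(4) you assert that a minimal point $A$ of $(2^X,T)$ lying in $2^X_\pi$ is automatically an almost periodic set of $(X,T)$. This is only clear when $A$ is \emph{finite}: then $v\circ A=vA=A$ for some $v\in J$, and since $v$ is idempotent the induced self-map of the finite set $A$ is the identity, so $vx=x$ for every $x\in A$. For infinite $A$ the inclusion $vA\subseteq v\circ A=A$ can be strict, so ``$A$ minimal $\Rightarrow$ $A$ almost periodic'' is unjustified and you have not excluded infinite minimal $A$. The clean fix uses the special fiber directly: the minimal orbit closure $\overline{T\cdot A}\subseteq 2^X_\pi$ surjects onto $Y$ via $\widetilde\pi$, so some $A'\in\overline{T\cdot A}$ satisfies $A'\subseteq\pi^{-1}(y_0)$, whence $|A'|\le N$; since $2^X_{\pi,N}$ is closed and $T$-invariant and the orbit closure is minimal, it follows that $A\in 2^X_{\pi,N}$ as well. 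Your detour through $p\circ\pi^{-1}(y_0)$ is not needed here.

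In (4)$\Rightarrow$(2) you write ``$u\pi^{-1}(y)$ is a minimal point in $2^X_\pi$'' and then invoke (4). But a priori $u\pi^{-1}(y)$ need not be closed, so it is not yet an element of $2^X$ and (4) cannot be applied to it. Replace it by $u\circ\pi^{-1}(y)$, which is always a minimal point of $(2^X,T)$ (because $u\in J$ acts as a minimal idempotent on the hyperspace flow) and lies in $2^X_\pi$; then (4) gives $|u\circ\pi^{-1}(y)|\le N$, and the inclusion $u\pi^{-1}(y)\subseteq u\circ\pi^{-1}(y)$ yields $|u\pi^{-1}(y)|\le N$. After this your argument on $Y_c$ goes through: for $y\in Y_c$ the fiber $\pi^{-1}(y)$ lies in the minimal flow $Y^*$, hence is a finite minimal point, hence (by the idempotent argument above) an almost periodic set, of the constant size $N'$ given by Lemma~\ref{lem-number}.

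One editorial remark: you announce the cycle $(1)\Rightarrow(4)\Rightarrow(3)\Rightarrow(2)\Rightarrow(1)$ but then carry out $(1)\Rightarrow(3)\Rightarrow(4)\Rightarrow(2)\Rightarrow(1)$; align the two.
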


%\medskip

\begin{rem}
\begin{enumerate}
  \item By Proposition \ref{almost N-1}, almost finite to one extensions of minimal flows are almost proximal. In Section \ref{section-example}, we will give examples which are almost proximal but not almost finite to one extensions.
  \item By definition it is obvious that a finite to one extension (i.e., every fiber is finite) is almost finite to one. But in general, an almost finite to one extension may not be finite to one. For example, for Rees' example \cite{R}, $\pi: (X,T)\rightarrow (X_{eq},T)$ is an almost one to one extension ($X_{eq}$ is the maximal equcontinuous factor of $X$), and for any $y\in X_{eq}$, either $|\pi^{-1}(y)|=1$ or $|\pi^{-1}(y)|=\infty$.
  \item For more discussion about finite to one and almost finite to one extensions, refer to \cite{HLSY}.
\end{enumerate}

\end{rem}

%If $(X,T)$ is an almost one to one extension of some equicontinuous flow then one calls it an {\em almost automorphic} flow.

By Proposition \ref{almost N-1}, it is easy to check the following corollary.

\begin{cor}\label{cor-2.17}
Let $\pi:(X,T)\rightarrow (Y,T)$ be an extension of minimal flows.
If $\pi$ is proximal but not almost one to one, then every fiber is infinite, i.e., $\pi$ is not almost finite to one.
\end{cor}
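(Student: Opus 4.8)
\textbf{Proof proposal for Corollary \ref{cor-2.17}.}

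The plan is to argue by contradiction using the characterization of almost finite to one extensions in Proposition \ref{almost N-1}. So suppose $\pi$ is proximal but not almost one to one, and suppose toward a contradiction that $\pi$ is almost finite to one. Then by Proposition \ref{almost N-1}(2) there is $N\in\N$ such that $Y_0=\{y\in Y:|\pi^{-1}(y)|=N\}$ is residual in $Y$; equivalently, by (3), there is some $y_0\in Y$ with $|\pi^{-1}(y_0)|=N$ and $\pi^{-1}(y_0)$ an almost periodic set.

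Next I would invoke proximality to pin down $N$. Fix such a $y_0$ and write $\pi^{-1}(y_0)=\{x_1,\dots,x_N\}$. Since $\pi^{-1}(y_0)$ is an almost periodic set, $(x_1,\dots,x_N)$ is a minimal point of $(X^N,T)$, so by Proposition \ref{prop-minimal-point} there is $u\in J$ with $u x_i=x_i$ for all $i$. But $\pi$ is proximal, so any two points in the same fiber form a proximal pair; in particular $(x_i,x_j)\in P(X)$ for all $i,j$. By Proposition \ref{prop-proximal}(3), for $i\neq j$ there is a minimal idempotent $w$ with $x_j=w x_i$. On the other hand, if $i\neq j$ then $x_i\neq x_j$, and a nontrivial proximal pair cannot be a minimal point of $(X^2,T)$ (as noted in the introduction: a minimal pair is distal). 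Yet $(x_i,x_j)$, being a restriction of the coordinates of the minimal point $(x_1,\dots,x_N)$, is itself minimal in $(X^2,T)$ — contradiction unless there are no such pairs, i.e. $N=1$. Hence the only way $\pi$ can be almost finite to one while being proximal is $N=1$, which means $\pi^{-1}(y_0)$ is a singleton and $Y_0=\{y:|\pi^{-1}(y)|=1\}$ is residual, so $\pi$ is almost one to one, contradicting the hypothesis.

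I expect the main (though still short) obstacle to be making the step "a nontrivial proximal pair that is also an almost periodic pair forces the two points to coincide" fully rigorous in this setting. The cleanest route is: if $x_i\neq x_j$ lie in a common fiber and $(x_1,\dots,x_N)$ is minimal, then $(x_i,x_j)$ is minimal in $(X^2,T)$; being minimal, the only idempotent-image of $x_i$ it can contain is $x_i$ itself, so by Proposition \ref{prop-proximal}(3) applied in the minimal flow $\overline{\O}((x_i,x_j),T)$ the pair $(x_i,x_j)$ is proximal only if $x_j=x_i$. Combined with proximality of $\pi$ this kills all off-diagonal fiber pairs, forcing $N=1$. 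Everything else is a direct citation of Proposition \ref{almost N-1}.
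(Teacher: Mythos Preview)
Your argument is correct and is exactly the approach the paper intends: use Proposition~\ref{almost N-1}(3) to obtain a finite fiber that is an almost periodic set, then observe that proximality of $\pi$ forces any two of its points to be a proximal \emph{and} minimal pair, hence equal, so $N=1$ and $\pi$ is almost one to one --- a contradiction. Your ``cleanest route'' paragraph is more elaborate than necessary; the single fact you need (stated in the paper's introduction) is that a minimal pair $(x_i,x_j)$ with $x_i\neq x_j$ cannot be proximal, which follows immediately since the orbit closure of a proximal pair meets $\Delta_X$ and, by minimality, must then be contained in $\Delta_X$.
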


%\begin{proof}
%If $\pi$ is almost finite to one, by Proposition \ref{almost N-1}, there exist $N\in \N$ and $y_0\in Y$ such that $|\pi^{-1}(y_0)|=N$ and $\pi^{-1}(y_0)$ is a minimal point of $(2^X, T)$. Let $x_1,x_2\in \pi^{-1}(y_0)$. Then $(x_1,x_2)$ is a minimal point. Since $\pi$ is proximal, $\overline{\O((x_1,x_2), T)}\cap \D(X)\not = \emptyset$. As $\overline{\O((x_1,x_2), T)}$ is minimal, it follows that $\overline{\O((x_1,x_2), T)}\subseteq \D(X)$. Thus $x_1=x_2$. Hence $N=1$, which means $\pi$ is almost one to one. A contradiction! Thus  $\pi$ is not almost finite to one.
%\end{proof}

%\begin{lem}\label{lem-f}
%Let $A=\{x_1,x_2,\ldots,x_N\}\in 2^X$ and $p\in {\bf M}$. Then in $2^X$, we have that
%$$p\c A=p\c \{x_1,\ldots,x_N \}=\{px_1,\ldots, p x_N\}.$$
%\end{lem}

%By Lemma \ref{lem-f}, we have:
%\begin{cor}\label{cor-f}
%Let $\pi:(X,T)\rightarrow (Y,T)$ be an extension with $(Y,T)$ being minimal. If $\pi$ is an almost $N$ to one extension for some $N\in \N$, then the cardinality of each element of $Y'$ is $N$, where $(Y',T)$ is the minimal flow defined at the beginning of this subsection.
%\end{cor}

\subsection{The structure of almost proximal extensions}\
\medskip

The following result is well known and it is easy to be verified (one may find a proof in \cite{HLSY}).
\begin{lem} \label{finite-to-one}
	Let $\pi: X \rightarrow Y$ be a finite to one extension (i.e., $\pi^{-1}(y)$ is finite for all $y\in Y$)
	of the minimal flows $(X,T)$ and $(Y,T)$. Then the following conditions are equivalent:
	\begin{enumerate}
		\item $\pi$ is open;
		\item $\pi$ is distal;
		\item $\pi$ is equicontinuous;
	\end{enumerate}
	In this case there exists $N\in \N$ such that $\pi$ is an $N$ to one map.
\end{lem}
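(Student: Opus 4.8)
\textbf{Proof plan for Lemma \ref{finite-to-one}.}

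The implications $(3)\Rightarrow(2)\Rightarrow(1)$ are standard for arbitrary extensions of minimal flows: every equicontinuous (isometric) extension is distal, and every distal extension of minimal flows is open (this is recalled just before Theorem \ref{RIC}). So the real content is the chain $(1)\Rightarrow(3)$ under the finiteness hypothesis, together with the final assertion that the fiber cardinality is then constant. I would organize the argument around the map $\pi^{-1}\colon Y\to 2^X$, $y\mapsto\pi^{-1}(y)$, which by Theorem \ref{thm-open} is \emph{continuous} precisely because $\pi$ is open. Since every fiber is finite, $\pi^{-1}$ actually takes values in $2^X_{\pi,*}=\bigcup_{n\ge1}2^X_{\pi,n}$, and on this space the Hausdorff metric interacts well with cardinality: if $A,B$ are finite sets with $d_H(A,B)$ small and $|B|\le|A|$, then $B$ meets a small ball around each point of $A$, so some two points of $A$ are close together unless $|A|=|B|$. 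Concretely, I would fix $y_0\in Y$, let $N=|\pi^{-1}(y_0)|$, pick $\delta>0$ smaller than half the minimal pairwise distance inside $\pi^{-1}(y_0)$, and use continuity of $\pi^{-1}$ to find a neighborhood $U$ of $y_0$ with $d_H(\pi^{-1}(y),\pi^{-1}(y_0))<\delta/2$ for $y\in U$; this forces $|\pi^{-1}(y)|=N$ on $U$. By minimality of $(Y,T)$ and equivariance of $\pi^{-1}$ (note $t\,\pi^{-1}(y)=\pi^{-1}(ty)$), the set where the fiber has cardinality $N$ is open, invariant and nonempty, hence all of $Y$: $\pi$ is exactly $N$ to one. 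This already gives the last sentence of the lemma and, a fortiori, that $\pi$ is a finite-to-one open map with all fibers of the same size $N$.

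Next I would upgrade ``$N$ to one and open'' to equicontinuity of $\pi$. The point is that a continuous map $y\mapsto\pi^{-1}(y)=\{x_1(y),\dots,x_N(y)\}$ into the space of $N$-point subsets of a compact metric space can, locally, be resolved into $N$ continuous branches $y\mapsto x_i(y)$ (choose $\delta$ as above so that the $\delta/2$-balls around the $N$ points of $\pi^{-1}(y_0)$ are disjoint; for $y$ near $y_0$ exactly one point of $\pi^{-1}(y)$ lies in each such ball, and this selection is continuous by the Hausdorff-metric estimate). Thus $\pi$ is a local homeomorphism, in particular locally injective: there is $\eta>0$ such that $\pi(x_1)=\pi(x_2)$ and $d(x_1,x_2)<\eta$ imply $x_1=x_2$. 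Equicontinuity of the extension is now immediate from this together with a uniform-continuity / compactness argument: given $\varepsilon>0$ we may shrink $\eta$ so that $d(x_1,x_2)<\eta$ and $\pi(x_1)=\pi(x_2)$ already force $x_1=x_2$, and then for any $x_1,x_2$ in the same fiber with $d(x_1,x_2)<\eta$ we have $x_1=x_2$, so trivially $d(tx_1,tx_2)=0<\varepsilon$ for all $t\in T$. (Equivalently: a finite-to-one open extension of minimal flows has $R_\pi$ consisting of the diagonal plus a set bounded away from the diagonal, which is exactly a strong form of equicontinuity.)

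The main obstacle is the passage from the Hausdorff-continuity of $y\mapsto\pi^{-1}(y)$ to the \emph{continuous local branches} $x_i(y)$ and the resulting local injectivity of $\pi$ — i.e., making precise that ``nearby finite fibers of equal cardinality are matched up point-by-point by nearby points.'' Everything else is either cited (the distal $\Rightarrow$ open and isometric $\Rightarrow$ distal implications) or a short metric-space computation with $d_H$. I would handle this obstacle cleanly by the disjoint-ball argument sketched above: fix $y_0$, separate the $N$ points of $\pi^{-1}(y_0)$ by disjoint open $\delta$-balls $B_1,\dots,B_N$, and observe that continuity of $\pi^{-1}$ at $y_0$ gives a neighborhood on which each fiber has exactly one point in each $B_i$, yielding the branches and hence a uniform modulus of injectivity after a final compactness argument over $Y$. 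Once this is in place, conditions $(1)$, $(2)$, $(3)$ are seen to be equivalent and the ``$N$ to one'' conclusion follows.
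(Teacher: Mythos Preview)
Your proposal is correct. The paper does not supply its own proof of this lemma: it simply records that ``the following result is well known and it is easy to be verified (one may find a proof in \cite{HLSY})'' and then states the lemma. So there is nothing in the paper to compare your argument against; your plan is a standard and clean way to carry out the verification, using Theorem~\ref{thm-open} to get continuity of $y\mapsto\pi^{-1}(y)$, deducing constant fiber cardinality $N$ by minimality, and then observing that $R_\pi\setminus\Delta_X$ is uniformly bounded away from the diagonal, which is (vacuously) equicontinuity.

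One small remark on exposition: when you argue that nearby fibers have the \emph{same} cardinality, your Hausdorff-metric computation as written only gives $|\pi^{-1}(y)|\ge N$ near $y_0$. The cleanest way to get equality is to note that $y\mapsto|\pi^{-1}(y)|$ is upper semicontinuous for any factor map and lower semicontinuous when $\pi$ is open, hence locally constant; invariance plus minimality then force it to be globally constant. Also, the passage from ``locally injective at each point'' to a \emph{uniform} $\eta$ with $\{(x_1,x_2)\in R_\pi:d(x_1,x_2)<\eta\}\subset\Delta_X$ needs a short compactness step (e.g., the function $y\mapsto\min\{d(x,x'):x\ne x'\in\pi^{-1}(y)\}$ is continuous and positive on the compact $Y$), which you allude to but should make explicit.
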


\begin{thm}\label{GP-Structure}
Let $\pi: (X,T)\rightarrow (Y,T)$ be an almost proximal extension of minimal flows. Then it has the
following structure:
\begin{equation*}
\xymatrix
{
X \ar[d]_{\pi}  &  X' \ar[l]_{\sigma'}\ar[d]^{\pi'} \\
Y &  Y'\ar[l]^{\tau'}
}
\end{equation*}
where $\sigma'$ and $\tau'$ are proximal, $\pi'$ is a
finite to one equicontinuous extension.

Moreover, $\pi$ is almost finite to one if and only if $\tau', \sigma'$ are almost one to one.
\end{thm}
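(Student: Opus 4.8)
The plan is to obtain the diagram from the RIC-diagram (Theorem~\ref{RIC}) applied to $\pi$ and then show that, under the almost proximality hypothesis, the RIC extension $\pi'$ is automatically finite to one and equicontinuous. First I would invoke Theorem~\ref{RIC} to get proximal extensions $\tau':Y'\to Y$ and $\sigma':X'\to X$ together with a RIC extension $\pi':X'\to Y'$ fitting into the commuting square, with $X'$ the unique minimal set in $R_{\pi\tau'}$. The first key point is that $\pi'$ inherits almost proximality from $\pi$: since $\tau'$ and $\sigma'$ are proximal, they are in particular almost one to one over a residual set, and more usefully, for $x\in X$, $u\in J_x$ we have a correspondence between $u\pi^{-1}(y)$ and $u'(\pi')^{-1}(y')$ via the sketched construction $y'=u\circ u\pi^{-1}(y)$, $X'=\{(\tilde x,\tilde y):\tilde x\in\tilde y\}$. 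Concretely I would argue, using Lemma~\ref{lem-number} and Proposition~\ref{a.p equivalenve}, that $\sigma'$ proximal forces $|u'(\pi')^{-1}(y')|$ to equal the common value $N=|u\pi^{-1}(y)|$ for an idempotent $u'$ in the appropriate $J$; hence $\pi'$ is almost proximal with the same bound $N$.

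Next, the crucial step: a RIC extension that is almost proximal must be \emph{distal}, hence (being also open, as RIC extensions are) finite to one and equicontinuous by Lemma~\ref{finite-to-one}. For this I would use the RIC property in the form $(\pi')^{-1}(y')=p\circ v(\pi')^{-1}(y_0')$ for $y'=py_0'$, together with the fact that for a RIC extension the minimal points are dense in $R^n_{\pi'}$ for every $n$. If $\pi'$ were not distal there would be a proximal pair $(x_1,x_2)$ in a fiber with $x_1\neq x_2$; applying a minimal idempotent $v$ fixing $x_1$ gives $vx_2=x_1$, so $x_1,x_2$ lie in one fiber but $v$ collapses them. Combining this with the density of minimal points in $R^n_{\pi'}$ and the RIC description of fibers, I expect to produce, inside a single fiber, almost periodic sets of arbitrarily large finite cardinality (roughly: a nontrivial proximal pair in a RIC extension propagates, via the group structure of $v\M$ acting on fibers, to arbitrarily many distinct minimal points in a fiber), contradicting almost proximality. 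Once $\pi'$ is distal and open, Lemma~\ref{finite-to-one} gives immediately that $\pi'$ is equicontinuous and $N$ to one.

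For the ``moreover'' clause, suppose $\pi$ is almost finite to one; then by Proposition~\ref{almost N-1} the set $\{y:|\pi^{-1}(y)|=N\}$ is residual in $Y$, and I would transport this through the diagram: since $\pi'$ is $N$ to one everywhere and $\tau'$ is proximal (so almost one to one, by Corollary~\ref{cor-2.17}'s circle of ideas — a proximal extension that is finite-to-one-like is almost one to one), a fiber of $\sigma'$ over a point where $\pi$ already has exactly $N$ preimages must be a singleton, giving that $\sigma'$ is almost one to one, and similarly for $\tau'$. Conversely, if $\tau'$ and $\sigma'$ are almost one to one, then over the residual set of common injectivity points the fiber $\pi^{-1}(y)$ is carried bijectively to a fiber $(\pi')^{-1}(y')$, which has cardinality $N$; so $|\pi^{-1}(y)|=N$ on a residual set and $\pi$ is almost finite to one by Proposition~\ref{almost N-1} again.

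The main obstacle I anticipate is the middle step — rigorously showing that an almost proximal RIC extension is distal. The density of minimal points in all $R^n_{\pi'}$ plus the explicit fiber formula $(\pi')^{-1}(py_0')=p\circ v(\pi')^{-1}(y_0')$ should be the right tools, but turning ``a single nontrivial proximal pair in the fiber'' into ``almost periodic subsets of unbounded size in some fiber'' requires care: one must use that $v(\pi')^{-1}(y_0')$ is a finite set of cardinality $N$ permuted by the group $v\M$, and that a proximal pair inside a fiber gives two distinct points with the \emph{same} image under some minimal idempotent, so they cannot both survive in $v(\pi')^{-1}(y_0')$ — yet minimality/denseness of almost periodic points in $R_{\pi'}$ forces enough almost periodic structure to contradict the finite bound. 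Pinning down exactly this counting argument is where the real work lies; everything else is bookkeeping with the two diagrams and Proposition~\ref{almost N-1}.
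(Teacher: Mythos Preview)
Your overall framework is right --- take the RIC-diagram and show $\pi'$ is finite to one equicontinuous --- but you are missing the one-line observation that makes the middle step trivial, and your ``moreover'' argument has a real gap.

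For the main part: do not try to prove abstractly that ``RIC $+$ almost proximal $\Rightarrow$ distal'' by manufacturing unbounded almost periodic sets from a proximal pair. Instead, work in the explicit $2^X$-model of the RIC-diagram. Almost proximality of $\pi$ says $u\pi^{-1}(y)$ is a \emph{finite} subset of $X$; hence the base point $y'=u\circ u\pi^{-1}(y)$ of $Y'\subseteq 2^X$ is finite (for finite $A$ one has $p\circ A=pA$), and so is every $p\circ y'\in Y'$. Since in this model $X'=\{(\tilde x,\tilde y):\tilde x\in\tilde y\}$, one has $|(\pi')^{-1}(\tilde y)|=|\tilde y|<\infty$. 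Thus $\pi'$ is finite to one and open immediately, and Lemma~\ref{finite-to-one} gives equicontinuity. No contradiction argument is needed; your proposed mechanism (a single proximal pair propagating to arbitrarily large almost periodic sets) is neither obviously correct nor necessary.

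For the ``moreover'' forward direction, your counting argument does not close. From $|\pi^{-1}(y)|=N$ and $\pi'$ being $N$ to one you do get (using $\pi'$ distal and $\sigma'$ proximal) that $\sigma'$ is injective on each $(\pi')^{-1}(y')$, hence a bijection onto $\pi^{-1}(y)$. But for two distinct $y_1',y_2'\in(\tau')^{-1}(y)$ this only yields that the corresponding points in $X'$ over a common $x$ are proximal --- i.e.\ that $(y_1',y_2')$ is proximal --- which is nothing beyond $\tau'$ being proximal. You never force $|(\tau')^{-1}(y)|=1$. The paper's argument is again via the explicit construction: by Proposition~\ref{almost N-1} there is $y_0$ with $\pi^{-1}(y_0)$ finite and a minimal point of $(2^X,T)$, i.e.\ $u\circ\pi^{-1}(y_0)=\pi^{-1}(y_0)$; then the RIC base point $y'=u\circ u\pi^{-1}(y_0)$ coincides with the O-diagram base point $u\circ\pi^{-1}(y_0)$, so the RIC-diagram \emph{is} the O-diagram and $\tau',\sigma'$ are almost one to one.
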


\begin{proof}
In light of the construction of RIC-diagram, $\sigma'$ and $\tau'$ are proximal and $\pi '$ is a RIC extension. Let $y\in Y$ and $u\in J_y$.	Since $\pi$ is almost proximal, then $|u\pi^{-1}(y)|<\infty.$ Thus $y'=u\circ u\pi^{-1}(y)$ is finite, and for every $p\in \M,$ $|p\c y'  |<\infty.$ It follows that $\pi'$ is finite to one and open. By Lemma \ref{finite-to-one}, $\pi '$ is equicontinous.
	
\medskip

	Clearly if $\tau', \sigma'$ are almost one to one then  $\pi$ is almost finite to one. Conversely, when $\pi$ is almost finite to one,
    by Proposition \ref{almost N-1}, there exists $y_0$ such that $|\pi ^{-1}(y_0)|< \infty$ and $\pi ^{-1}(y_0)$ is a minimal point of $(2^X,T),$i.e., $u\circ \pi ^{-1}(y_0)=\pi ^{-1}(y_0).$ Hence the construction of RIC-diagram coincides with O-diagram. Therefore, $\tau', \sigma'$ are almost one to one.
\end{proof}

If in addition the extension is regular, an almost proximal extension has a succinct structure. Let
${\rm Aut} (X,T)$ be the group of automorphisms of the flow $(X,T)$, that is, the group of all self-homeomorphisms $\psi$ of $X$ such that $\psi\circ t=t\circ \psi, \forall t\in T$. For an extension $\pi: (X,T)\rightarrow (Y,T)$, let $${\rm Aut}_\pi(X,T)=\{\chi\in {\rm Aut}(X,T): \pi\circ \chi=\pi\},$$ i.e., elements of ${\rm Aut}(X,T)$ mapping every fiber of $\pi$ into itself.

\begin{de}
Let $\pi: (X, T)\rightarrow (Y,T)$ be an extension of minimal flows. One says $\pi$ is {\em regular} if for any point $(x_1,x_2)\in R_\pi$ there exists  $\chi \in {\rm Aut}_\pi(X,T)$ such
that $(\chi(x_1),x_2)\in P(X,T)$. It is equivalent to: for any
minimal point $(x_1,x_2)$ in $R_\pi$ there exists $\chi\in
{\rm Aut}_\pi (X,T)$ such that $\chi(x_1)=x_2$.
\end{de}

%It can be proved that the endomorphisms in the definition may be assumed to be an automorphisms.
The notion of regularity was introduced by Auslander \cite{Au66}. Examples of regular extensions are
proximal extensions, group extensions. For more information about regularity, refer to \cite{Au66, Au88, G92}.

%\begin{lem}[Glasner]\cite{G92}\label{Glasner}
%If $(X,T)$ is a minimal regular flow, then its maximal equicontinuous factor $X_{eq}$ is a compact group rotation and $\pi:X\rightarrow X_{eq}$ is RIC and weakly mixing.
%\end{lem}

\begin{thm}\label{thm-reg}
Let $\pi: (X,T)\rightarrow (Y,T)$ be an extension of minimal flows.
If $\pi$ is regular and almost proximal, then there exists a flow $(Y^\#,T)$ with the following commutative diagram
$$
\xymatrix@R=0.5cm{
  X \ar[dd]_{\pi} \ar[dr]^{\pi^\#}             \\
                & Y^\# \ar[dl]_{\tau^\#}         \\
  Y                 }
$$
where $\tau^\#$ is proximal and $\pi^\#$ is a
finite to one equicontinuous extension. And $\pi$ is almost finite to one if and only if $\tau^\#$ is almost one to one.
\end{thm}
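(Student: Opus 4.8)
The plan is to take $Y^\#$ to be the quotient of $X$ by its group of $\pi$-automorphisms $G:={\rm Aut}_\pi(X,T)$. Fix $x_0\in X$, $u\in J_{x_0}$, $y_0=\pi(x_0)$, and set $F:=u\pi^{-1}(y_0)\subseteq\pi^{-1}(y_0)$. By Proposition~\ref{a.p equivalenve}, almost proximality gives $|F|=N<\infty$, and since $u$ is an idempotent permuting the finite set $F$ it fixes $F$ pointwise, so $F$ is an almost periodic set. Now I would observe that every $\chi\in G$ commutes with the action of $\M$ on $X$ (being continuous and commuting with $T$), whence $\chi(F)=\chi(u\pi^{-1}(y_0))=u\,\chi(\pi^{-1}(y_0))=u\pi^{-1}(y_0)=F$; so $G$ acts on the $N$-element set $F$. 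The restriction homomorphism $G\to\mathrm{Sym}(F)$ is injective, since a $\pi$-automorphism fixing $x_0$ must be the identity on the dense orbit $Tx_0$. Hence $G$ is a finite group, and by the same minimality argument each $\chi\in G\setminus\{\id\}$ is fixed-point-free on $X$, so every $G$-orbit has exactly $|G|$ points.

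Next I would set $Y^\#:=X/G$ with quotient map $\pi^\#: X\to Y^\#$, and let $\tau^\#: Y^\#\to Y$ be the map induced by $\pi$, which is well defined because $G\subseteq{\rm Aut}_\pi(X,T)$; thus $\pi=\tau^\#\circ\pi^\#$. Since $G$ is finite, the orbit relation $\bigcup_{\chi\in G}\{(x,\chi x):x\in X\}$ is a closed, $T$-invariant equivalence relation, so $Y^\#$ is a compact metric flow, minimal as a factor of $X$, and $\pi^\#$, $\tau^\#$ are factor maps; moreover $\pi^\#$ is exactly $|G|$-to-one, in particular finite to one. To get that $\pi^\#$ is equicontinuous it is enough, by Lemma~\ref{finite-to-one}, to see that $\pi^\#$ is distal. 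If $(a,\chi a)\in P(X)$ for some $\chi\in G$, then by Proposition~\ref{prop-proximal} there is $p$ in the enveloping semigroup with $pa=p(\chi a)$; as $\chi$ commutes with $p$, this says $\chi$ fixes the point $pa$, which has dense orbit, so $\chi=\id$ and $a=\chi a$. Hence $R_{\pi^\#}\cap P(X)=\D(X)$, $\pi^\#$ is distal, and therefore equicontinuous.

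It is regularity that makes $\tau^\#$ proximal. Given $(a,b)\in R_\pi$, regularity supplies $\chi\in{\rm Aut}_\pi(X,T)=G$ with $(\chi a,b)\in P(X,T)$; pushing this proximal pair forward by $\pi^\#$ and using $\pi^\#(\chi a)=\pi^\#(a)$ yields $(\pi^\#(a),\pi^\#(b))\in P(Y^\#)$, and every pair in a fiber of $\tau^\#$ arises in this way, so $\tau^\#$ is proximal. Finally, for the dichotomy: if $\tau^\#$ is almost one to one then $\pi=\tau^\#\circ\pi^\#$ is almost finite to one because $\pi^\#$ is finite to one; conversely, if $\pi$ is almost finite to one, Proposition~\ref{almost N-1} gives a fiber $\pi^{-1}(y_1)$ that is a finite almost periodic set, and for $a,b\in\pi^{-1}(y_1)$ the pair $(a,b)$ is minimal, so regularity produces $\chi\in G$ with $(\chi a,b)\in P(X)$; since $\chi a$ and $b$ both lie in the almost periodic set $\pi^{-1}(y_1)$ and a minimal pair that is also proximal must be diagonal (as noted in the introduction), $\chi a=b$. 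Thus $\pi^{-1}(y_1)$ is a single $G$-orbit, so $(\tau^\#)^{-1}(y_1)$ is a singleton, and an extension of minimal flows with a singleton fiber is almost one to one.

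The step I expect to be the main obstacle is the repeated use of regularity in combination with the ``a minimal pair is not a nontrivial proximal pair'' principle, in order to turn the automorphism $\chi$ handed out by regularity into a genuine permutation of an almost periodic fiber (this is needed both for the proximality of $\tau^\#$ and for the converse direction of the dichotomy); one must also check the soft points carefully, namely that $X/G$ is honestly a compact metric flow — i.e. that the orbit equivalence relation is closed and the quotient metrizable — and that a minimal extension possessing a single singleton fiber is almost one to one.
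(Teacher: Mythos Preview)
Your argument is correct and takes a genuinely different route from the paper. The paper builds $Y^\#$ as the $\M$-orbit $\{p\,{\bf y}_0:p\in\M\}$ of the finite almost periodic set ${\bf y}_0=u\pi^{-1}(y_0)$, then uses regularity together with a cardinality count (Lemma~\ref{lem-number}) to show these sets partition $X$, and checks openness of $\pi^\#$ directly. You instead realise $Y^\#$ as the orbit space $X/G$ of the finite group $G={\rm Aut}_\pi(X,T)$, reading off finiteness of $G$ from its faithful action on the finite set $F=u\pi^{-1}(y_0)$, and obtain equicontinuity of $\pi^\#$ from distality via Lemma~\ref{finite-to-one}. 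The two constructions in fact yield the \emph{same} partition of $X$: since $G$ commutes with the action of $\M$ and (by regularity applied to minimal pairs in $F$) acts simply transitively on $F$, one has $G\cdot(px_1)=p\,(Gx_1)=pF=p\,{\bf y}_0$ for every $p\in\M$. What your approach buys is a cleaner conceptual picture (a finite group extension) and a self-contained proof of the final dichotomy, which the paper's proof does not spell out; what the paper's approach buys is that it parallels the RIC-diagram construction and avoids any explicit appeal to the structure of ${\rm Aut}_\pi(X,T)$. Your ``soft points'' are indeed soft: finiteness of $G$ makes the orbit relation closed and the quotient metrizable, and a factor map of minimal flows with one singleton fiber is almost one to one by upper semicontinuity of $\pi^{-1}$.
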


\begin{proof}
Let $y_0\in Y$ and $u\in J_{y_0}$, i.e., $uy_0=y_0$. Let ${\bf y}_0=u\pi^{-1}(y_0)=\{x_1,x_2,\ldots, x_n\}$ where $n\in \N$.
Let $$Y^\#=\{p{\bf y}_0: p\in \M\}.$$
It is clear that $X=\bigcup_{{\bf y}\in Y^\#} {\bf y}$.
We show that $Y^\#$ is a partition of $X$, that is, for all $p,q\in \M$, either $p{\bf y}_0=q{\bf y}_0$, or $p{\bf y}_0 \cap q{\bf y}_0=\emptyset$.

Assume that there are $p,q\in \M$ such that $p{\bf y}_0=\{px_1,\ldots, px_n\}\not =\{qx_1, \ldots, qx_n\}=q{\bf y}_0$ and $p{\bf y}_0 \cap q{\bf y}_0\neq \emptyset$. Without loss of generality, we assume that $px_1=qx_1$ and $qx_2\not\in p{\bf y}_0$.
Since $(qx_1,qx_2)\in R_\pi$ is a minimal point and $\pi$ is regular, there is some $\chi\in {\rm Aut}_\pi(X,T)$ such that
$qx_2=\chi(qx_1)$. Let $v\in J(\M)$ such that $vp{\bf y}_0=p{\bf y}_0$. Then
$$qx_2=\chi(qx_1)=\chi(px_1)=\chi(vpx_1)=v\chi(px_1)=v\chi(qx_1)=vqx_2.$$
Thus $qx_2\in v\pi^{-1}(py_0)$. It follows that
$\{px_1,px_2,\ldots, px_n, qx_2\}$ is an almost periodic subset of $\pi^{-1}(py_0)$, whose cardinality is $n+1$.
This contradicts with Lemma \ref{lem-number}. Thus $Y^\#$ is a partition of $X$. It induces a map $$\pi^\#: X\rightarrow Y^\#, \ x\mapsto p{\bf y}_0, \ (x\in p{\bf y}_0), p\in \M.$$
It is easy to check that $\pi$ is open, and by Lemma \ref{finite-to-one} it is a $n$ to $1$ equicontinuous extension.

Let $$\tau^\# : Y^\#\rightarrow Y, p{\bf y_0}\mapsto py_0, \forall p\in \M.$$
We show that it is proximal. Let $\tau^\#(p{\bf y}_0)=\tau^\#(q{\bf y}_0)$ for some $p,q\in \M$. Then
$py_0=qy_0$. There are minimal idempotents $u_1,u_2\in J(\M)$ such that
\[p=u_1 p,q=u_2q.\]
Since $|{\bf y}_0|<\infty,$ we have
\[p{\bf y}_0=u_1p{\bf y}_0=u_1p  u\pi^{-1}(y_0)=u_1\pi^{-1}(py_0).\]
Similarly, one has $q{\bf y}_0=u_2\pi^{-1}(qy_0)$.
Then
$$u_1q{\bf y}_0=u_1u_2\pi^{-1}(qy_0)=u_1\pi^{-1}(py_0)=p{\bf y}_0.$$
So $p{\bf y}_0$ and $q{\bf y}_0$ are proximal. That is, $\tau^\#$ is a proximal extension.
\end{proof}

% Note that in Theorem \ref{thm-reg} $\tau^\#$ coincides with $\tau'$ in RIC-diagram.

\subsection{Relations between almost proximal extensions and almost one to one extensions}\
%\medskip

It is clear that any almost finite to one extension is an almost proximal extension. In this subsection we show that if in addition the extension is point distal, then the converse holds.

Let $\pi: (X,T)\rightarrow (Y,T)$ be an extension of minimal flows. A point $x\in X$ is called a {\em $\pi$-distal point} whenever $P_\pi[x]\triangleq\{x'\in \pi^{-1}(\pi(x)): (x,x')\in P(X)\}=\{x\}$, that is, $(x,x')$ is a distal pair for every $x'\in \pi^{-1}(\pi(x))\setminus \{x\}$. The extension $\pi$ is said to be {\em point distal} when there exists a $\pi$-distal point in $X$. Note that $\pi$ is distal if and only if every point is $\pi$-distal.
It is easy to see that a point $x$ is $\pi$-distal if and only if $ux=x$ for all $u\in J_{\pi(x)}$ \cite[Chapter VI-(4.3)]{Vr}.

\medskip

\begin{thm}\cite[Theorem 2.3.5.]{V77}\label{thm-Veech}
Let $\pi: (X,T)\rightarrow (Y,T)$ be an extension of minimal flows. Then RIC-diagram and O-diagram coincide if and only if there is some $y\in Y$ such that $\bigcap_{u\in J_y} u\circ u\pi^{-1}(y)\neq \emptyset$.
\end{thm}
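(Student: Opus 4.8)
The plan is to prove the two implications separately, working throughout with a fixed base point $x_0 \in X$, $u_0 \in J_{x_0}$ and $y_0 = \pi(x_0)$. Recall that in the O-diagram we set $y^* = u_0 \c \pi^{-1}(y_0)$ and $Y^* = \overline{T y^*}$ in $2^X$, while in the RIC-diagram we set $y' = u_0 \c u_0\pi^{-1}(y_0)$ and $Y' = \overline{T y'}$. Both $Y^*$ and $Y'$ are minimal subflows of $2^X$ lying over $Y$ via the map sending a fiber-subset to its $\pi$-image, and the extensions $X^* \to Y^*$, $X' \to Y'$ are both recovered as the orbit closure of $(x_0, \cdot)$ inside $X \times Y^*$ (resp.\ $X \times Y'$). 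Since $u_0\pi^{-1}(y_0) \subseteq \pi^{-1}(y_0)$, applying $u_0 \c (\cdot)$ gives $y' = u_0 \c u_0\pi^{-1}(y_0) \subseteq u_0 \c \pi^{-1}(y_0) = y^*$; so there is always a natural factor map $Y^* \to Y'$ (restriction of the inclusion-reversing behaviour is monotone under the circle action), and the O-diagram refines the RIC-diagram. The content of the theorem is the characterization of when this refinement is an isomorphism.

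For the direction ``$\bigcap_{u \in J_{y}} u\c u\pi^{-1}(y) \ne \emptyset$ for some $y$ implies the diagrams coincide'': I would first reduce, using minimality and Lemma \ref{lem-number}-type homogeneity of the construction, to the case $y = y_0$. The key point is that $Y^* = Y'$ as subsets of $2^X$; since both are minimal orbit closures, it suffices to show $y^* \in Y'$, or even that $y^*$ and $y'$ have the same orbit closure. Pick a point $a \in \bigcap_{u \in J_{y_0}} u \c u\pi^{-1}(y_0)$. For each $u \in J_{y_0}$ we have $u\pi^{-1}(y_0) \subseteq \pi^{-1}(y_0)$, hence $y' = u_0 \c u_0 \pi^{-1}(y_0)$; the nonempty intersection hypothesis is exactly what forces the various sets $u \c u\pi^{-1}(y_0)$ (as $u$ ranges over $J_{y_0}$) to be ``linked'', and a standard manipulation with idempotents — multiplying $y^* = u_0 \c \pi^{-1}(y_0)$ on the left by a suitable $u \in J_{y_0}$ and using $u \c (p \c D) = (up) \c D$ together with $\pi^{-1}(y_0) = \bigcup_{u \in J_{y_0}} u\pi^{-1}(y_0)$ — should collapse $y^*$ into the orbit closure of $y'$. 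I expect the crux to be showing $u_0 \c \pi^{-1}(y_0) \subseteq$ (orbit closure of $y'$), for which one writes $\pi^{-1}(y_0)$ as a union of the minimal pieces $u\pi^{-1}(y_0)$ and argues that the circle action of $u_0$ cannot separate them once the common point $a$ exists.

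For the converse, I would argue contrapositively: if $\bigcap_{u \in J_{y}} u\c u\pi^{-1}(y) = \emptyset$ for every $y \in Y$, then in particular at $y_0$ there exist $u_1, u_2 \in J_{y_0}$ with $\big(u_1 \c u_1\pi^{-1}(y_0)\big) \cap \big(u_2 \c u_2\pi^{-1}(y_0)\big) = \emptyset$ (or at least the total intersection is empty); one then shows the fiber $(\pi^*)^{-1}(\bar y)$ over the relevant point $\bar y \in Y^*$ is strictly larger than $(\pi')^{-1}$ of its image in $Y'$, so $\sigma^* : X^* \to X'$ (the natural map between the two lifts) is not an isomorphism and the diagrams differ. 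Concretely, $y^* = u_0 \c \pi^{-1}(y_0)$ contains points coming from distinct idempotent pieces which $y'$ does not see, and these give genuinely distinct preimages in $X^*$. The main obstacle in the whole argument is bookkeeping the circle operation: the identities $p \c (q \c D) = (pq) \c D$, $pD \subseteq p \c D$ with equality for finite $D$, and $u \c D = D$ when $D$ is a minimal point of $2^X$ — must be deployed carefully, since $\pi^{-1}(y_0)$ is generally infinite and not a minimal point, so one may not simply replace $\c$ by the semigroup action. I would isolate the needed combinatorial lemma about idempotents acting on $\pi^{-1}(y_0)$ first, and then both implications should follow by comparing orbit closures in $2^X$.
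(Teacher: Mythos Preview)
The paper does not supply a proof of this theorem: it is quoted verbatim as \cite[Theorem~2.3.5]{V77} and then immediately applied. So there is no in-paper argument against which to compare your proposal.

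As for the proposal itself, it is a reasonable outline of where the argument lives (comparing the minimal orbit closures $Y^*=\overline{T(u_0\c\pi^{-1}(y_0))}$ and $Y'=\overline{T(u_0\c u_0\pi^{-1}(y_0))}$ in $2^X$), but the decisive steps are only gestured at. In the forward direction you write that ``a standard manipulation with idempotents \ldots\ should collapse $y^*$ into the orbit closure of $y'$''; this is precisely the content of the theorem, and the hypothesis $\bigcap_{u\in J_{y_0}}u\c u\pi^{-1}(y_0)\neq\emptyset$ has to be used in a concrete way, not just invoked. The decomposition $\pi^{-1}(y_0)=\bigcup_{u\in J_{y_0}}u\pi^{-1}(y_0)$ you appeal to is correct, but passing from it to a statement about $u_0\c\pi^{-1}(y_0)$ requires care, since the circle operation does not commute with infinite unions and $J_{y_0}$ is typically infinite. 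Your claim that ``there is always a natural factor map $Y^*\to Y'$'' from the containment $y'\subseteq y^*$ also needs justification: containment of two minimal points in $2^X$ does not by itself produce a flow homomorphism between their orbit closures. For the converse you move from ``the total intersection over $J_{y_0}$ is empty'' to ``there exist $u_1,u_2$ with disjoint pieces'', which is a compactness-type step that is not automatic for an infinite intersection of (possibly non-closed) sets. If you want to turn this into a self-contained proof you should consult Veech's original argument, or the treatment in de~Vries \cite[Chapter~VI]{Vr}, where the bookkeeping with the circle operation and the Ellis group is carried out in full.
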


\begin{thm}
Let $\pi: (X,T)\rightarrow (Y,T)$ be an extension of minimal flows.
Then the following are equivalent:
\begin{enumerate}
  \item $\pi$ is almost proximal and point distal;
  \item $\pi$ is almost finite to one.
\end{enumerate}
\end{thm}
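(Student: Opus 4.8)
The plan is to prove the two directions separately, using the structure theorem for almost proximal extensions (Theorem \ref{GP-Structure}) together with the characterization of point distality via idempotents.

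\textbf{$(2)\Rightarrow(1)$.} Suppose $\pi$ is almost finite to one. By Proposition \ref{almost N-1} it is almost proximal (this is the easy direction, already noted in the Remark after that proposition), so it only remains to check that $\pi$ is point distal. Again by Proposition \ref{almost N-1}, there is $y_0\in Y$ and $N\in\N$ with $|\pi^{-1}(y_0)|=N$ and $\pi^{-1}(y_0)$ an almost periodic set; equivalently $u\circ\pi^{-1}(y_0)=\pi^{-1}(y_0)$, so $u\pi^{-1}(y_0)=\pi^{-1}(y_0)$ for some $u\in J_{y_0}$. Since $u$ acts injectively on the minimal point $\pi^{-1}(y_0)$ (distinct coordinates of a minimal point stay distinct under any element of $\M$) and $|\pi^{-1}(y_0)|=N<\infty$, the map $x\mapsto ux$ is a bijection of the finite set $\pi^{-1}(y_0)$. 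Applying Lemma \ref{lem-number}, for every $y\in Y$ and $v\in J_y$ we get $|v\pi^{-1}(y)|=N=|\pi^{-1}(y)|$, so $v\pi^{-1}(y)=\pi^{-1}(y)$ and $x\mapsto vx$ is a bijection of $\pi^{-1}(y)$ fixing it setwise. I would then use this to produce a $\pi$-distal point: pick any $x$ with $ux=x$ (exists since $x$ minimal), and observe that since $v\pi^{-1}(\pi(x))=\pi^{-1}(\pi(x))$ permutes the finite fiber, there is a positive integer $k$ with $v^k$ acting as the identity on the fiber — but $v$ is idempotent, so $v$ itself fixes every point of $\pi^{-1}(\pi(x))$. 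Hence $wx=x$ for every $w\in J_{\pi(x)}$ (apply the argument with $w$ in place of $v$), which by the cited characterization \cite[Chapter VI-(4.3)]{Vr} means $x$ is $\pi$-distal. So $\pi$ is point distal.

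\textbf{$(1)\Rightarrow(2)$.} Suppose $\pi$ is almost proximal and point distal. Let $x_0$ be a $\pi$-distal point, so $ux_0=x_0$ for all $u\in J_{y_0}$ where $y_0=\pi(x_0)$. Then $x_0\in u\pi^{-1}(y_0)$ for every $u\in J_{y_0}$, and in fact $x_0\in u\circ u\pi^{-1}(y_0)$ for every such $u$ (since $u\pi^{-1}(y_0)\supseteq\{x_0\}$ and $u\circ\{x_0\}=\{x_0\}$). Therefore $\bigcap_{u\in J_{y_0}} u\circ u\pi^{-1}(y_0)\ni x_0\neq\emptyset$, so by Theorem \ref{thm-Veech} the RIC-diagram and O-diagram of $\pi$ coincide. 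Now feed this into Theorem \ref{GP-Structure}: its last clause says $\pi$ is almost finite to one if and only if $\tau',\sigma'$ are almost one to one, and the argument in that proof shows precisely that the RIC-diagram coinciding with the O-diagram forces $\tau',\sigma'$ to be almost one to one. Hence $\pi$ is almost finite to one.

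\textbf{Main obstacle.} The delicate point is the $(2)\Rightarrow(1)$ direction: turning "some finite fiber is setwise fixed by an idempotent" into "there is a point fixed by \emph{all} idempotents over its image." The clean way around this is exactly the observation that an idempotent $v$ permuting a finite set $F=\pi^{-1}(y)$ must fix $F$ pointwise (since $v=v^2=v^3=\cdots$ and some power of the permutation is the identity), so pointwise-fixing is automatic once we know $v$ maps $F$ into (hence onto) $F$; combined with Lemma \ref{lem-number} giving $|v\pi^{-1}(y)|=|\pi^{-1}(y)|$ for all $y,v$, this makes every fiber consist entirely of $\pi$-distal points. In fact this shows the stronger statement that an almost finite to one extension is \emph{distal}, which is consistent with Lemma \ref{finite-to-one}. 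The rest is bookkeeping with the two structure diagrams already established.
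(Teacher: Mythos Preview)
Your $(1)\Rightarrow(2)$ direction is essentially identical to the paper's: both use the characterization $ux_0=x_0$ for all $u\in J_{y_0}$ to put $x_0$ into $\bigcap_{u\in J_{y_0}} u\circ u\pi^{-1}(y_0)$, invoke Theorem \ref{thm-Veech} so that the RIC-diagram and O-diagram coincide, and then read off from Theorem \ref{GP-Structure} that $\tau',\sigma'$ are almost one to one and hence $\pi$ is almost finite to one.

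In the $(2)\Rightarrow(1)$ direction there is a genuine overreach. Lemma \ref{lem-number} gives $|v\pi^{-1}(y)|=N$ for all $y$ and $v\in J_y$, but it does \emph{not} give $|\pi^{-1}(y)|=N$ for all $y$; almost finite to one extensions can have infinite fibers (see the Rees example in the Remark after Proposition \ref{almost N-1}). Consequently your claim that $v\pi^{-1}(y)=\pi^{-1}(y)$ for every $y$ is unjustified, and the ``stronger statement'' that an almost finite to one extension is distal is false: any almost one to one extension that is not an isomorphism is a counterexample, since for a non-singleton fiber $\pi^{-1}(y)$ one has $|v\pi^{-1}(y)|=1$, forcing any two points of that fiber to be proximal. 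Lemma \ref{finite-to-one}, which you cite as corroboration, concerns \emph{finite to one} (every fiber finite), not almost finite to one.

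That said, your idempotent-permutation argument is perfectly correct when restricted to the special fiber $\pi^{-1}(y_0)$ of cardinality exactly $N$: for every $v\in J_{y_0}$ one has $v\pi^{-1}(y_0)\subseteq\pi^{-1}(y_0)$ with $|v\pi^{-1}(y_0)|=N=|\pi^{-1}(y_0)|$, so $v$ permutes $\pi^{-1}(y_0)$, and idempotence forces $v$ to fix it pointwise. This already yields a $\pi$-distal point, which is all that is needed. The paper's argument here is a bit more direct: since $\pi^{-1}(y_0)$ is an almost periodic set, any two distinct points of it form a minimal pair in $X\times X$, hence a distal pair; so every point of $\pi^{-1}(y_0)$ is $\pi$-distal. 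Either route works once you stay on the fiber $y_0$.
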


\begin{proof}
If $\pi$ is almost finite to one, it is almost proximal. By Theorem \ref{almost N-1}, there exist $N\in \N$ and $y_0\in Y$ such that $|\pi^{-1}(y_0)|=N$ and $\pi^{-1}(y_0)$ is a minimal point of $(2^X, T)$. Each point in $\pi^{-1}(y_0)$ is $\pi$-distal.

Conversely, assume that $\pi$ is almost proximal and point distal. Let $x_0\in X$ be a $\pi$-distal point and $y_0=\pi(x_0)$. Since $x_0$ is $\pi$-distal, $ux_0=x_0$ for all $u\in J_{y_0}$. Thus $x_0\in \cap_{u\in J_{y_0}}u\circ u \pi^{-1}(y_0)$. By Theorem \ref{thm-Veech}, the RIC-diagram and O-diagram of $\pi$ coincide. That is,  in the RIC-diagram,
\begin{equation*}
\xymatrix
{
X \ar[d]_{\pi}  &  X' \ar[l]_{\sigma'}\ar[d]^{\pi'} \\
Y &  Y'\ar[l]^{\tau'}
}
\end{equation*}
$\sigma'$ and $\tau'$ are almost one to one, $\pi'$ is a finite to one extension.
Thus $\pi$ is almost finite to one.
\end{proof}

\begin{cor}
Let $\pi: (X,T)\rightarrow (Y,T)$ be an extension of minimal flows.
Then $\pi$ is proximal and point distal if and only if $\pi$ is almost one to one.
\end{cor}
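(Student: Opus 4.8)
The plan is to deduce the corollary from the preceding theorem, specialised to $N=1$, together with the characterisations of proximal and of almost one to one extensions already assembled in this section. I will use repeatedly that, by Proposition \ref{a.p equivalenve}, $\pi$ is proximal exactly when $|u\pi^{-1}(y)|=1$ for some (equivalently, every) $y\in Y$ and $u\in J_y$.

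For the forward implication I would argue as follows. Assume $\pi$ is proximal and point distal. A proximal extension is in particular almost proximal (it is the case $N=1$), so the preceding theorem gives that $\pi$ is almost finite to one. It then remains to improve ``almost finite to one'' to ``almost one to one'', and this is the only place proximality is really needed. By Proposition \ref{almost N-1} there is a $y_0\in Y$ for which $\pi^{-1}(y_0)$ is finite and an almost periodic set; hence $u\pi^{-1}(y_0)=\pi^{-1}(y_0)$ for a suitable $u\in J_{y_0}$, and proximality forces $|\pi^{-1}(y_0)|=|u\pi^{-1}(y_0)|=1$. So some fiber is a single point, and Proposition \ref{almost N-1}(2) (with $N=1$) shows that $Y_0=\{y\in Y:|\pi^{-1}(y)|=1\}$ is residual in $Y$. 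Since $Y_0$ is $T$-invariant and $\pi$ is continuous and equivariant, $X_0:=\pi^{-1}(Y_0)$ is a nonempty invariant $G_\delta$ subset of the minimal flow $X$, hence dense, and every point of $X_0$ has a one-point fiber; thus $\pi$ is almost one to one.

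For the converse I would take an almost one to one $\pi$ and choose $x\in X$ with $\pi^{-1}(\pi(x))=\{x\}$. Since $(X,T)$ is minimal, $x$ is a minimal point, so $ux=x$ for some $u\in J_{\pi(x)}$, whence $|u\pi^{-1}(\pi(x))|=1$; by Proposition \ref{a.p equivalenve} this means $\pi$ is proximal. Moreover $P_\pi[x]\subseteq\pi^{-1}(\pi(x))=\{x\}$, so $x$ is a $\pi$-distal point and $\pi$ is point distal. Putting the two directions together gives the corollary.

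I expect no serious obstacle here: the real work is done in the preceding theorem, and what remains is bookkeeping. The one step that needs a moment's attention is the sharpening in the forward direction --- that in the proximal case the integer $N$ coming from the almost-finite-to-one structure must equal $1$ --- and this is exactly what proximality delivers through Proposition \ref{a.p equivalenve} (equivalently, through the constancy of $|u\pi^{-1}(y)|$ in Lemma \ref{lem-number}).
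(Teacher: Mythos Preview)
Your proof is correct and follows the approach the paper intends: the corollary is stated without proof as the $N=1$ specialisation of the preceding theorem, together with Proposition~\ref{almost N-1} and Proposition~\ref{a.p equivalenve}, exactly as you do. One small tightening: when you pass from ``$Y_0$ is residual'' to ``$X_0=\pi^{-1}(Y_0)$ is $G_\delta$'', note that residual only means \emph{contains} a dense $G_\delta$; what you actually use is that $Y_0=\{y:|\pi^{-1}(y)|=1\}$ is itself $G_\delta$ (by upper semicontinuity of $y\mapsto\pi^{-1}(y)$), so $\pi^{-1}(Y_0)$ is a nonempty invariant $G_\delta$ in the minimal flow $X$ and hence dense.
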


\section{Proximal extensions and weakly mixing extensions}\label{section-wm-ap}

In this section we will study the chaotic properties of proximal but not almost one to one extensions.

\subsection{Weakly mixing extensions}\label{subsection-wm}\
\medskip

Let $(X,T)$ be a flow. A point $x\in X$ is {\em recurrent} if for each neighbourhood $U$ of $x$, the set of return times of $x$ to $U$, $N(x,U)=\{t\in T: tx\in U\}$ is infinite. A point $x$ is recurrent if and only if there exists $p\in S_T\setminus T$ such that $px=x$, if and only if there exists an idempotent $u\in S_T\setminus T$ such that $ux=x$ \cite{EEN}.

\begin{de}
Let $(X,T)$ be a flow.
\begin{enumerate}
  \item A pair $(x,y)\in X^2$ is called a {\em strong Li-Yorke pair} if it is proximal and it is also a recurrent point of $(X^2, T)$.
  \item A subset $S$ of $X$ is called {\em strongly scrambled} if every pair of distinct points in $S$ is a strong Li-Yorke pair.
  \item The flow $(X,T)$ is said to be {\em strongly Li-Yorke chaotic} if it contains an uncountable strongly scrambled set.
\end{enumerate}
\end{de}

The following result says that each non-trivial weakly mixing extension contains plenty of strongly scrambled subsets.

\begin{thm} \label{thm-wm}\cite{AGHSY}
Let $(X,T), (Y,T)$ be a flow and $\pi: (X,T)\rightarrow (Y, T)$ an open
nontrivial weakly mixing extension. Then there is a residual
subset $Y_0\subseteq Y$ such that for every point $y\in Y_0$ the
set $\pi^{-1}(y)$ contains a dense uncountable strongly scrambled subset $K$ such that
$$K\times K\setminus \D_X\subseteq {\rm Trans}(R_\pi),$$
and in particular, $\pi^{-1}(y)$ is perfect, and it has the cardinality of the continuum.
\end{thm}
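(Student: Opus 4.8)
The plan is to show that the set ${\rm Trans}(R_\pi)$ of transitive points of the relation flow $(R_\pi,T)$ is already the desired reservoir of strong Li--Yorke pairs, and then to transport it into the individual fibres by a relative Kuratowski--Mycielski argument based on the openness of $\pi$. First I would analyse ${\rm Trans}(R_\pi)$ itself. Since $\pi$ is a weakly mixing extension, $(R_\pi,T)$ is a transitive compact metric flow, so by the Baire category theorem ${\rm Trans}(R_\pi)$ is a dense $G_\delta$ subset of $R_\pi$. If $(x_1,x_2)\in{\rm Trans}(R_\pi)$ then $\overline{T(x_1,x_2)}=R_\pi\supseteq\Delta_X$, so some net $t_i(x_1,x_2)$ converges to a diagonal point $(z,z)$; hence $d(t_ix_1,t_ix_2)\to0$ and $(x_1,x_2)\in P(X,T)$. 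Next, $\pi$ being nontrivial forces every fibre of $\pi$ to be infinite: if some fibre were finite then, since $\{y:|\pi^{-1}(y)|\le n\}$ is closed and $T$-invariant in the minimal flow $Y$, all fibres would be uniformly finite, so by Lemma \ref{finite-to-one} $\pi$ would be distal, whence $R_\pi$ would consist of minimal points of $(X^2,T)$ and, being transitive, would be minimal and thus equal to $\Delta_X$ --- contradicting nontriviality. Consequently $R_\pi\supsetneq\Delta_X$ is infinite, hence (being compact metric and transitive) perfect, and a transitive point of a perfect compact metric flow is recurrent in $(X^2,T)$. Thus each off-diagonal point of ${\rm Trans}(R_\pi)$ is a recurrent proximal pair, i.e.\ a strong Li--Yorke pair.

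Second I would pass to the fibres. Since $\pi$ is open, Theorem \ref{thm-open} gives that $y\mapsto\pi^{-1}(y)$ is continuous, so $q:(R_\pi,T)\to(Y,T)$, $q(x_1,x_2)=\pi(x_1)$, is an open factor map with $q^{-1}(y)=\pi^{-1}(y)\times\pi^{-1}(y)$. Applying the relative Kuratowski--Mycielski theorem to the open map $q$ and the residual set ${\rm Trans}(R_\pi)$ yields a residual set $Y_0\subseteq Y$ so that, for each $y\in Y_0$, there is a dense Cantor set $K\subseteq\pi^{-1}(y)$ with $(K\times K)\setminus\Delta_X\subseteq{\rm Trans}(R_\pi)$. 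By the first part, $K$ is then strongly scrambled; since a dense perfect subset forces the ambient compact set to be perfect, $\pi^{-1}(y)$ is perfect, and since $K$ is a Cantor set, $|\pi^{-1}(y)|=2^{\aleph_0}$. The mechanism behind this step is a Kuratowski--Ulam argument along $q$, showing that ${\rm Trans}(R_\pi)\cap(\pi^{-1}(y)\times\pi^{-1}(y))$ is residual in $\pi^{-1}(y)\times\pi^{-1}(y)$ for $y$ in a residual set, followed by the classical Mycielski construction carried out inside the perfect space $\pi^{-1}(y)$ and refined against a countable base of $\pi^{-1}(y)$ to make $K$ dense.

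The main obstacle is this second step: making the Kuratowski--Ulam/Fubini principle for Baire category work along the open surjection $q$ (so that a residual subset of $R_\pi$ restricts to a residual subset of $\pi^{-1}(y)\times\pi^{-1}(y)$ for residual-many $y$), together with ensuring that the relevant fibres $\pi^{-1}(y)$ have no isolated points, so that Mycielski's theorem applies inside them. Once this relative Kuratowski--Mycielski machinery is in place, the verification in the first part is elementary and the remaining assertions --- perfectness and continuum cardinality of $\pi^{-1}(y)$ --- are immediate consequences of the presence of the dense Cantor set $K$.
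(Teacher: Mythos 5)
The paper itself does not prove Theorem \ref{thm-wm}; it quotes it from \cite{AGHSY}, and your outline follows essentially the same route as that source: off-diagonal transitive points of $(R_\pi,T)$ are strong Li--Yorke pairs, openness of $\pi$ makes $q:(R_\pi,T)\to(Y,T)$ open with $q^{-1}(y)=\pi^{-1}(y)\times\pi^{-1}(y)$, a fibrewise Kuratowski--Ulam argument slices the residual set $\mathrm{Trans}(R_\pi)$ into residual subsets of residually many fibres, and Kuratowski--Mycielski inside the fibre produces $K$. Two points, however, are genuine gaps as written. First, your recurrence step rests on the claim that an infinite transitive compact metric flow is perfect; this is false in general (translation on the one-point compactification of $\Z$ is infinite and transitive, yet every point except the fixed point is isolated), so perfectness of $R_\pi$ needs an argument using the actual structure: for instance, an isolated point of $R_\pi$ would be a transitive point, and a diagonal one forces $R_\pi=\Delta_X$, contradicting nontriviality; moreover for the pairs you ultimately use the issue disappears, since a pair lying in $K\times K$ with $K$ perfect is non-isolated in $R_\pi$, and a non-isolated transitive point returns to each of its neighbourhoods infinitely often. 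Incidentally, the detour showing every fibre is infinite both invokes minimality (not among the stated hypotheses, and Lemma \ref{finite-to-one} requires it) and is unnecessary for your purpose: $R_\pi\supsetneq\Delta_X$ is exactly nontriviality of $\pi$.

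Second, and more seriously, the perfectness of the fibres $\pi^{-1}(y)$, which you rightly say is indispensable before Mycielski can be run inside a fibre, is nowhere established and does not follow from anything you prove: infinite fibres can well have isolated points. You need that $\{y\in Y:\pi^{-1}(y)\ \text{is perfect}\}$ is residual. In the minimal setting one may quote Theorem \ref{thm-AAG}; alternatively a direct argument works: for a basic open $V\subseteq X$ the set $E_V=\{y: |V\cap\pi^{-1}(y)|=1\}$ is the intersection of the open set $\pi(V)$ with a closed set, so if it were non-meagre it would contain a nonempty open set $O$, and then $W=V\cap\pi^{-1}(O)$ is a nonempty open set on which $\pi$ is injective, whence $(W\times W)\cap R_\pi\subseteq\Delta_X$ is a nonempty open subset of $R_\pi$ disjoint from the dense open invariant set $R_\pi\setminus\Delta_X$, a contradiction; running over a countable base shows the set of $y$ whose fibre has an isolated point is meagre. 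The fibrewise Kuratowski--Ulam step you defer is true and is proved by the same kind of openness argument (for $U$ open dense in $R_\pi$ and basic open $V$, the set $q(U\cap V)$ is open and dense in $q(V)$, so $\{y: U\cap q^{-1}(y)\ \text{is dense in}\ q^{-1}(y)\}$ contains a dense open set), so it is not an obstruction, but it must be written out since it carries the weight of the proof. Finally, note that a dense Cantor set in a compact fibre would force the fibre itself to be a Cantor set; what Kuratowski--Mycielski actually yields is a dense union of countably many Cantor sets, which is all the statement requires.
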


\begin{de}
Let $\pi: (X,T)\rightarrow (Y,T)$ be an extension of flows.
\begin{enumerate}
  \item $\pi$ is called {\em $n$-weakly mixing} ($n\ge 2$) if $(R^n_\pi, T)$ is transitive.
  \item If $\pi$ is $n$-weakly mixing for every $n\ge 2$, then $\pi$ is called {\em totally weakly mixing}.
\end{enumerate}
\end{de}

Note that $2$-weakly mixing extension is just weakly mixing. Totally weakly mixing extensions will present stronger chaotic properties than weakly mixing extensions, see \cite{AGHSY, SY18} for more details. For RIC extensions, weak mixing is equivalent to total weak mixing.

\begin{thm}\cite{AMWW, Gl05}\label{thm-total-wm}
Let $\pi: (X,T)\rightarrow (Y,T)$ be a RIC extension of minimal flows. Then the following conditions are equivalent:
\begin{enumerate}
  \item $\pi$ is weakly mixing;
  \item $\pi$ is $n$-weakly mixing for some $n\ge 2$;
  \item $\pi$ is totally weakly mixing.
\end{enumerate}
\end{thm}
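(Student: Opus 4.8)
The plan is to prove the equivalence of the two conditions in Theorem~\ref{thm-total-wm} by showing the only nontrivial implication, namely $(2)\Rightarrow(3)$, since $(3)\Rightarrow(2)\Rightarrow(1)$ is immediate from the definitions and $(1)$ is the case $n=2$ of $(2)$. Actually it suffices to prove: if $\pi$ is RIC and $n$-weakly mixing for \emph{some} $n\ge 2$, then $\pi$ is $(n+1)$-weakly mixing; iterating this upward gives total weak mixing, and the case $n=2$ already shows weak mixing implies $n$-weak mixing for all $n$. So I would fix $n\ge 2$, assume $(R^n_\pi,T)$ is transitive, and aim to show $(R^{n+1}_\pi,T)$ is transitive.

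First I would set up the RIC structure concretely. Since $\pi:X\to Y$ is RIC, pick $y_0=\pi(x_0)$ and a minimal idempotent $u\in J_{x_0}$; then for $y=py_0$ one has $\pi^{-1}(y)=p\circ u\pi^{-1}(y_0)$, and moreover minimal points are dense in each $R^k_\pi$. The key technical tool is the characterization of transitivity of $(R^{n+1}_\pi,T)$ via the relative enveloping semigroup / quasi-factor machinery: transitivity of $R^{n+1}_\pi$ is equivalent to the statement that for a residual set of $y\in Y$ there is a point in $\pi^{-1}(y)^{n+1}$ whose orbit is dense in $R^{n+1}_\pi$. Using that $\pi$ is open (RIC implies open) and the density of minimal points in $R^k_\pi$ for all $k$, one reduces, by a standard Baire-category argument in the fibered hyperspace, to producing transitive points in $R^{n+1}_\pi$ from those in $R^n_\pi$.

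The heart of the argument is the following amplification step: given $(x_1,\dots,x_n)\in R^n_\pi$ a transitive point of $R^n_\pi$ and given any target $(z_1,\dots,z_{n+1})\in R^{n+1}_\pi$ together with a neighborhood $V_1\times\cdots\times V_{n+1}$ of it, I want to find $t\in T$ with $t(x_1,\dots,x_n)$ inside $V_1\times\cdots\times V_n$ \emph{and simultaneously} some $x_{n+1}\in\pi^{-1}(\pi(x_1))$ with $tx_{n+1}\in V_{n+1}$ and $(x_1,\dots,x_n,x_{n+1})$ close to being transitive in $R^{n+1}_\pi$. The mechanism is to apply a suitable element $p$ of a minimal left ideal $I\subseteq\M$ to an already-dense orbit: transitivity of $R^n_\pi$ gives a dense orbit in $R^n_\pi$; then because $\pi$ is RIC, the fiber $\pi^{-1}(\pi(x_1))$ equals $p\circ u\pi^{-1}(y_0)$, so one can adjoin an $(n+1)$st coordinate living in the same fiber, using that $z_{n+1}$ is itself proximal to some almost periodic point in that fiber. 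One then invokes the relative version of the Furstenberg–type lemma (the "relatively incontractible" property is exactly what makes the $(n+1)$-fold fibered product behave like an $n$-fold one after applying an idempotent). I would carry this out using Proposition~\ref{prop-proximal} and Proposition~\ref{prop-minimal-ideal} to move between fiber points via minimal idempotents, and Theorem~\ref{thm-wm}'s style of category argument to pass from "dense orbit for residually many $y$" back to "$(R^{n+1}_\pi,T)$ transitive".

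The main obstacle I anticipate is precisely this amplification step: controlling the $(n+1)$st coordinate while not destroying the density already achieved in the first $n$ coordinates. The subtlety is that applying $p\in I$ to a transitive point of $R^n_\pi$ need not keep it transitive, and one must choose $p$ (equivalently, choose the idempotent and the net defining it) so that the first $n$ coordinates return near their target while the new coordinate is steered independently — this independence is where RIC-ness (openness plus density of minimal points in all $R^k_\pi$) is genuinely used, and it fails for non-RIC extensions. I would handle it by working in the hyperspace $2^X_\pi$ and its powers, where the circle operation $p\circ(\cdot)$ is continuous and absorbs the combinatorial bookkeeping, citing \cite{AMWW, Gl05} for the precise form of the fibered-product lemma rather than reproving it. Everything else — the equivalences $(1)\Leftrightarrow(2)$ for $n=2$ and the upward induction $n\rightsquigarrow n+1$ — is then routine.
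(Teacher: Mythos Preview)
The paper does not prove Theorem~\ref{thm-total-wm}: it is stated with a citation to \cite{AMWW, Gl05} and used as a black box. There is therefore no ``paper's own proof'' to compare your proposal against.

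As to the proposal itself: the overall architecture (reduce to the inductive step $n\Rightarrow n+1$, use RIC to supply density of minimal points in every $R^k_\pi$, and amplify a transitive point of $R^n_\pi$ to one of $R^{n+1}_\pi$) is a reasonable shape for an argument, but what you have written is not yet a proof. The decisive step---steering the $(n+1)$st coordinate independently of the first $n$ without destroying transitivity---is exactly where the content lies, and you resolve it by saying you will ``cite \cite{AMWW, Gl05} for the precise form of the fibered-product lemma rather than reproving it.'' That is circular: those are precisely the references in which the theorem is proved. The actual arguments in \cite{AMWW} and \cite{Gl05} do not proceed by a bare induction on $n$ of the kind you sketch; they go through the Ellis-group/relative equicontinuous structure machinery (for a RIC extension, weak mixing is equivalent to the absence of a nontrivial relatively equicontinuous factor, which is an algebraic condition on the Ellis groups that immediately yields transitivity of every $R^n_\pi$). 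If you want a self-contained proof along your inductive lines, you would need to actually carry out the amplification step, and the tools you invoke (Propositions~\ref{prop-proximal}, \ref{prop-minimal-ideal}, and a ``Theorem~\ref{thm-wm}'s style of category argument'') are not by themselves enough to do this.
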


In general, the fact that $\pi: (X,T)\rightarrow (Y,T)$ is weakly mixing can not imply that $\pi$ is totally weakly mixing. In \cite{Gl05}, for $(Y,T)$ being trivial, the author gave a weakly mixing flow $(X,T)$ such that $(X^3,T)$ is not transitive. By Theorem \ref{thm-total-wm}, such $T$ is not abelian. In Section \ref{section-example}, we will show that there exists $\pi: (X,\Z)\rightarrow (Y,\Z)$ which is weakly mixing but not 3-weakly mixing. In fact we have

\begin{exam}
There exists a discrete flow $(X,\Z)$ such that $\pi: (X,\Z)\rightarrow (X_{eq}, \Z)$ is a proximal extension, and it is weakly mixing but not $3$-weakly mixing. (See Theorem \ref{thm-exam-wm}).
\end{exam}

\subsection{Proximal but not almost one-to-one extensions}\
\medskip

Let $P^{(n)}$ be the subset of $X^n$ of all points whose orbit closure intersects the diagonal of $X^n$. Thus $P(X)=P^{(2)}$. Glasner showed that for a minimal flow  $(X,T)$, if $P^{(n)}$ is dense in $X^n$ for every $n\ge 2$, then $X$ is weakly disjoint from all minimal flows, i.e., $(X\times Y,T)$ is transitive for all minimal flow $(Y,T)$ \cite[Proposition 2.1.]{G76}. In particular, any minimal proximal flow is weakly mixing. This was extended by van der Woude  \cite{Wo82} as follows (see
also \cite{Gl05}).

\begin{thm}\label{pwm}
A non-trivial open proximal extension of minimal flows is
a weakly mixing extension.
\end{thm}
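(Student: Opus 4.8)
The plan is to prove Theorem \ref{pwm}: a non-trivial open proximal extension $\pi:(X,T)\to(Y,T)$ of minimal flows is weakly mixing as an extension, i.e., $(R_\pi,T)$ is transitive. Since $\pi$ is open, Theorem \ref{thm-open} tells us $\pi^{-1}:Y\to 2^X$ is continuous, and since $\pi$ is open and proximal it is in particular RIC (a proximal open extension has dense minimal points in each $R^n_\pi$, these being images of minimal points under the diagonal-type maps; alternatively use that proximal extensions are relatively incontractible once open). Thus by Theorem \ref{thm-total-wm} it suffices to show $\pi$ is $n$-weakly mixing for a single $n$, or even just to exhibit transitivity of $(R_\pi,T)$ directly. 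The cleanest route is to show directly that the minimal points are dense in $R_\pi$ and that $R_\pi$ contains a transitive point; but the most economical argument uses the proximality hypothesis to identify $R_\pi$ up to a proximal (hence irrelevant for transitivity) adjustment with a product.

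The key steps I would carry out are as follows. First, fix $y_0\in Y$, $u\in J_{y_0}$, and note $\pi^{-1}(y_0)=\{px_0: \text{suitable }p\}$ with all points of a fiber mutually proximal; choose $x_0,x_0'\in\pi^{-1}(y_0)$ distinct (possible since $\pi$ is non-trivial), and observe $(x_0,x_0')\in R_\pi\cap P(X)$. Second, I would show that $\overline{T(x_0,x_0')}=R_\pi$ up to the proximal relation, more precisely that the orbit closure of a generic proximal pair in $R_\pi$ is all of $R_\pi$; this uses that $S_T\cdot(x_0,x_0')\supseteq \M(x_0,x_0')$ and that $\M x_0=\M x_0'=X$ together with openness to conclude that $\{(px_0,px_0'):p\in\M\}$ projects onto $R_\pi$ after circle-operation closure. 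Third, I would invoke Glasner's result (the one cited just before the theorem: if $P^{(n)}$ is dense in $X^n$ for all $n$ then $X$ is weakly disjoint from all minimal flows) in its relative form: because $\pi$ is proximal, $P_\pi^{(n)}$ — the fiberwise $n$-fold proximal relation — is all of $R^n_\pi$, so in particular dense there; the relative version of Glasner's argument then yields that $R_\pi$ is transitive. Concretely, one shows that for any two non-empty open $U,V\subseteq R_\pi$ there is $t\in T$ with $tU\cap V\neq\emptyset$, by pulling back to $Y$ via the open map $\pi$, using minimality of $Y$ to line up the base coordinates, and then using proximality in the fiber (a proximal pair can be pushed into any prescribed fiberwise-open set along a suitable net, because the fiber over a continuity point of $\pi^{-1}$ is "filled" by the closure of any orbit of a proximal pair).

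The main obstacle I expect is the third step: transferring Glasner's absolute weak-mixing criterion ("$P^{(n)}$ dense for all $n$ $\Rightarrow$ weak disjointness from minimal flows") to the relative setting over $Y$. This requires care because transitivity of $R_\pi$ is not literally weak mixing of a single flow; one must run van der Woude's relative version, which typically proceeds by considering the extension $\pi\times\mathrm{id}:(X\times Z,T)\to(Y\times Z,T)$ for an arbitrary minimal $(Z,T)$ and showing the relevant relation is transitive, then specializing $Z$. The openness of $\pi$ is used essentially here — it guarantees $\pi^{-1}$ is continuous so that fiberwise-open sets pull back controllably and the minimal points of $R_\pi$ are dense (RIC). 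The proximality is used only to guarantee $P_\pi^{(n)}=R_\pi^{(n)}$ for all $n$, which is exactly the relative hypothesis Glasner/van der Woude's criterion needs. Once both ingredients are in place, the conclusion that $(R_\pi,T)$ is transitive — i.e., $\pi$ is a weakly mixing extension — follows.

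Alternatively, and perhaps more in the spirit of this paper, one can shortcut using the machinery already set up: $\pi$ proximal and open $\Rightarrow$ $\pi$ is RIC, and a RIC proximal extension is trivially weakly mixing unless... — here one must check that a non-trivial proximal RIC extension cannot be distal (it is purely proximal, so $R_\pi\setminus\Delta\subseteq P(X)$, and distality would force $R_\pi=\Delta$, contradicting non-triviality), hence by the structure theory the "weakly mixing part" of $\pi$ is non-trivial, and then Theorem \ref{thm-total-wm} upgrades it. I would present the direct Glasner–van der Woude argument as the main proof and mention this structural remark as an alternative.
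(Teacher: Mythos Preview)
The paper does not give its own proof of Theorem \ref{pwm}; it is quoted as a result of van der Woude \cite{Wo82} (see also \cite{Gl05}). So there is no ``paper's proof'' to compare against, only the literature proof, which indeed proceeds by relativizing Glasner's $P^{(n)}$-criterion.

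That said, your proposal contains a genuine error that undermines both your opening reduction and your alternative structural argument. You claim that an open proximal extension is RIC. This is false for any \emph{non-trivial} proximal extension. By the characterization recalled in the paper, RIC is equivalent to openness together with density of minimal points in $R^n_\pi$ for every $n$. But if $\pi$ is proximal and $(x_1,\dots,x_n)\in R^n_\pi$ is a minimal point of $(X^n,T)$, then each pair $(x_i,x_j)$ is simultaneously proximal and minimal in $X^2$, which forces $x_i=x_j$. Hence the minimal points of $R^n_\pi$ are exactly the diagonal $\Delta^n_X$, a proper closed subset once $\pi$ is non-trivial. Equivalently, in the RIC defining condition one has $u\pi^{-1}(y_0)=\{x_0\}$ a singleton, so $p\circ u\pi^{-1}(y_0)=\{px_0\}$, and $\pi^{-1}(y)=p\circ u\pi^{-1}(y_0)$ would force $\pi$ to be one-to-one. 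Consequently your appeal to Theorem \ref{thm-total-wm} is unavailable, and the ``structural shortcut'' in your last paragraph collapses for the same reason. (Also note that even if RIC held, Theorem \ref{thm-total-wm} would not help: you would be reducing the task of proving $2$-weak mixing to proving $n$-weak mixing for some $n\ge 2$, which is no reduction at all.)

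What remains of your plan is the ``third step'': relativize Glasner's argument that density of $P^{(n)}$ implies weak disjointness, using that for a proximal extension one has $P^{(n)}_\pi=R^n_\pi$. This is the correct route and is essentially van der Woude's proof, but in your write-up it is only gestured at (``the relative version of Glasner's argument then yields\dots''). The substantive work---showing that for non-empty relatively open $U,V\subseteq R_\pi$ one can find $t\in T$ with $tU\cap V\neq\emptyset$, using openness of $\pi$ to control fibers and proximality to collapse them---needs to be carried out explicitly; as written, the proposal does not yet contain a proof.
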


Thus by Theorem \ref{thm-wm}, we have

\begin{thm}\cite{AGHSY}\label{thm-pr-old}
Let $\pi:X\rightarrow Y$ be a proximal but not almost one-to-one
extension of minimal flows.
Then there is a residual subset $Y_0\subseteq Y$ such
that for each $y\in Y_0$,  $\pi^{-1}(y)$ contains an uncountable strongly scrambled subset
$K$.
%In particular, for each $y\in Y_0$,  $\pi^{-1}(y)$ is perfect.
\end{thm}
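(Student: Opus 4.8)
The plan is to derive Theorem~\ref{thm-pr-old} by passing to the O-diagram of $\pi$ and applying Theorem~\ref{pwm} and Theorem~\ref{thm-wm} there, then transporting the strongly scrambled sets back through the almost one-to-one map. First I would form the O-diagram
\begin{equation*}
\xymatrix{
X \ar[d]_{\pi} & X^* \ar[l]_{\sigma}\ar[d]^{\pi^*} \\
Y & Y^*\ar[l]^{\tau}
}
\end{equation*}
with $\sigma,\tau$ almost one-to-one and $\pi^*$ open. The key observation is that $\pi^*$ is again proximal: since $X^*=\{(\tilde x,\tilde y)\in X\times Y^*:\tilde x\in\tilde y\}$ and $\sigma$ is the coordinate projection, any pair $(z_1,z_2)\in R_{\pi^*}$ maps under $\sigma$ to a pair in $R_\pi$, hence a proximal pair of $X$; because $\tau$ is almost one-to-one (so $\sigma$ restricted to a single fiber of $\pi^*$ is injective up to the proximal identifications coming from $Y^*$), proximality of $\sigma(z_1),\sigma(z_2)$ in $X$ together with $\pi^*(z_1)=\pi^*(z_2)$ forces $(z_1,z_2)\in P(X^*)$. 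One must be slightly careful here and argue via minimal idempotents: if $(z_1,z_2)$ is a minimal point of $R_{\pi^*}$, then $\sigma(z_1),\sigma(z_2)$ lie in a common fiber of $\pi$ and are a minimal point of $R_\pi$, hence equal (as $\pi$ is proximal, its only minimal points in $R_\pi$ are on the diagonal), and since $\tau$ is almost one-to-one the second coordinates must then also agree, so $z_1=z_2$; thus the only minimal points of $R_{\pi^*}$ are diagonal, i.e. $\pi^*$ is proximal.

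Next I would check that $\pi^*$ is non-trivial. If $\pi^*$ were trivial (an isomorphism), then $\pi$ would be the composition $\tau\circ(\pi^*)\circ\sigma^{-1}$ of almost one-to-one maps; a composition of almost one-to-one extensions of minimal flows is almost one-to-one (the preimage of the residual one-to-one sets is again residual, using that the maps are open on those sets / using the standard fact about $G_\delta$ sets under factor maps of minimal flows), contradicting the hypothesis that $\pi$ is not almost one-to-one. Hence $\pi^*$ is a non-trivial open proximal extension, so by Theorem~\ref{pwm} it is a non-trivial weakly mixing extension.

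Now Theorem~\ref{thm-wm} applies to $\pi^*$: there is a residual set $Y_1^*\subseteq Y^*$ such that for every $y^*\in Y_1^*$ the fiber $(\pi^*)^{-1}(y^*)$ contains a dense uncountable strongly scrambled subset $K^*$. Finally I would push this down. The map $\sigma$ carries $(\pi^*)^{-1}(y^*)$ into the fiber $\pi^{-1}(\tau(y^*))$; on the residual set of one-to-one points of $\sigma$, $\sigma$ is injective, and being a factor map it is equivariant and continuous, so it sends strong Li--Yorke pairs to strong Li--Yorke pairs (proximality is preserved by any factor map, and recurrence in $(X^2,T)$ is preserved by the factor map $\sigma\times\sigma$). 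Intersecting $K^*$ with $\sigma^{-1}(\text{one-to-one points})$, which is residual in the perfect set $(\pi^*)^{-1}(y^*)$ and hence still uncountable, yields an uncountable strongly scrambled subset $K\subseteq\pi^{-1}(\tau(y^*))$. Taking $Y_0=\tau(Y_1^*\cap\{\text{suitable residual set}\})$ — more precisely the set of $y\in Y$ whose (unique, for $y$ in a residual set) preimage $y^*$ under $\tau$ lies in $Y_1^*$ and has the good one-to-one behaviour — gives a residual subset of $Y$ with the desired property.

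The main obstacle I expect is the bookkeeping around the almost one-to-one maps $\sigma$ and $\tau$: ensuring that ``residual in $Y^*$'' translates to ``residual in $Y$'' (this needs that $\tau$, being an almost one-to-one extension of minimal flows, pulls back and pushes forward residual sets appropriately), and ensuring that the restriction of $\sigma$ to the relevant fibers is injective on a set large enough to keep the scrambled set uncountable. Both are standard manipulations with residual sets in minimal flows and with the O-diagram, but they require care. The genuinely new input — that $\pi^*$ is proximal and non-trivial — is the short argument via minimal idempotents sketched above, and everything else is an application of the already-quoted Theorems~\ref{pwm} and~\ref{thm-wm}. (This is, of course, essentially the proof in \cite{AGHSY}; I reproduce it here for completeness.)
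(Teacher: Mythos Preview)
Your approach is correct and is exactly the route the paper takes (the detailed argument appears in the proof of the stronger Theorem~\ref{thm-prox}): pass to the O-diagram, note that $\pi^*$ is open, proximal and non-trivial, invoke Theorems~\ref{pwm} and~\ref{thm-wm}, and push the scrambled set down via $\sigma$. One simplification: your bookkeeping about the one-to-one locus of $\sigma$ is unnecessary (and the claim that this locus is residual \emph{in a given fiber} would need justification), because by the very construction $X^*=\{(x,\mathbf y)\in X\times Y^*:x\in\mathbf y\}$ with $\sigma(x,\mathbf y)=x$ and $\pi^*(x,\mathbf y)=\mathbf y$, so $\sigma$ restricted to any fiber $(\pi^*)^{-1}(\mathbf y)$ is already a bijection onto $\mathbf y\subseteq\pi^{-1}(\tau(\mathbf y))$; hence $K=\sigma(K^*)$ is directly an uncountable strongly scrambled subset of $\pi^{-1}(\tau(\mathbf y))$.
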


In fact, we have the following result which is slightly stronger than Theorem \ref{thm-pr-old}.

\begin{thm}\label{thm-prox}
Let $\pi:X\rightarrow Y$ be a proximal but not almost one-to-one
extension between minimal flows.
Then there is a residual subset $Y_0\subseteq Y$ such
that for each $y\in Y_0$,  $\pi^{-1}(y)$ contains an uncountable strongly scrambled subset
$K$ satisfying that for any $x_1 \neq x_2\in K$
$$ \pi^{-1}(y)\times \pi^{-1}(y)\subseteq \overline{\O}((x_1,x_2),T).$$
\end{thm}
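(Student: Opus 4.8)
The plan is to upgrade Theorem~\ref{thm-pr-old} by tracking not just a scrambled set but the orbit closures of the pairs it produces. The starting point is Theorem~\ref{pwm}: since $\pi$ is proximal and not almost one-to-one, Corollary~\ref{cor-2.17} shows $\pi$ is not almost finite to one, so in particular $\pi$ is a non-trivial extension; after passing to the open model via the O-diagram (Section on O-diagrams) one may assume $\pi$ is open, hence a non-trivial weakly mixing extension. Then $R_\pi$ is transitive, and $\text{Trans}(R_\pi)$ is a dense $G_\delta$ in $R_\pi$. I would invoke Theorem~\ref{thm-wm} to get the residual set $Y_0\subseteq Y$ and, for each $y\in Y_0$, the dense uncountable strongly scrambled $K\subseteq \pi^{-1}(y)$ with $K\times K\setminus \Delta_X\subseteq \text{Trans}(R_\pi)$. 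The additional claim to be extracted is that for $x_1\ne x_2$ in $K$, $\overline{\O}((x_1,x_2),T)$ contains all of $\pi^{-1}(y)\times\pi^{-1}(y)$, not merely a dense subset of $R_\pi$.

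The key observation is that a transitive point of $R_\pi$ has orbit dense in $R_\pi$, hence its orbit closure \emph{is} $R_\pi$. So if $(x_1,x_2)\in\text{Trans}(R_\pi)$ then $\overline{\O}((x_1,x_2),T)=R_\pi\supseteq \pi^{-1}(y)\times\pi^{-1}(y)$, since every pair of points in a single fiber lies in $R_\pi$. This is essentially immediate once the scrambled pairs are chosen inside $\text{Trans}(R_\pi)$, which is exactly what Theorem~\ref{thm-wm} guarantees. I would then need to descend from the open model back to the original $\pi$: if $\sigma\colon X^*\to X$ and $\tau\colon Y^*\to Y$ are the almost one-to-one maps in the O-diagram, a residual set of fibers of $\pi$ corresponds (via $\tau$, which is almost one-to-one, so carries residual sets to residual sets and is a bijection over a residual set) to fibers of $\pi^*$ where $\sigma$ restricts to a homeomorphism; pushing the scrambled set $K^*\subseteq (\pi^*)^{-1}(y^*)$ forward by $\sigma$ and using that $\sigma$ is a factor map (hence does not increase orbit closures and sends proximal/recurrent pairs to proximal/recurrent pairs) yields the desired $K\subseteq\pi^{-1}(y)$. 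The inclusion $\pi^{-1}(y)\times\pi^{-1}(y)\subseteq\overline{\O}((x_1,x_2),T)$ transfers because $\sigma\times\sigma$ maps $\overline{\O}((x_1^*,x_2^*),T)=R_{\pi^*}$ onto a set containing $\sigma\times\sigma((\pi^*)^{-1}(y^*)\times(\pi^*)^{-1}(y^*))=\pi^{-1}(y)\times\pi^{-1}(y)$.

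The main obstacle is the bookkeeping in the descent step: one must check that the ``good'' set of $y\in Y$ over which $\sigma$ restricts to a homeomorphism on the relevant fiber is still residual, and that strong Li-Yorke pairs survive the pushforward (proximality is automatic for factor maps; recurrence of $(x_1,x_2)$ follows from recurrence of $(x_1^*,x_2^*)$ since $\sigma\times\sigma$ is a factor map of $(X^{*2},T)\to(X^2,T)$ and recurrence is preserved under factor maps). I also need $K=\sigma(K^*)$ to remain uncountable, which holds because $\sigma$ is injective on $(\pi^*)^{-1}(y^*)$ for the chosen $y^*$. Everything else is a routine combination of Theorems~\ref{pwm}, \ref{thm-wm}, and \ref{thm-pr-old} with the basic fact that an orbit closure of a transitive point of $R_\pi$ equals $R_\pi$.
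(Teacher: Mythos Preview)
Your proposal is correct and follows essentially the same route as the paper: lift to the open model $\pi^*$ via the O-diagram, apply Theorems~\ref{pwm} and~\ref{thm-wm} to obtain $K^*\subseteq(\pi^*)^{-1}({\bf y})$ with $K^*\times K^*\setminus\Delta_{X^*}\subseteq\mathrm{Trans}(R_{\pi^*})$, and then push down by $\sigma$. The paper makes your ``good set'' explicit by taking $Y_0=Y_c\cap\tau(Y_0^*)$, so that for $y\in Y_0$ one has ${\bf y}=\pi^{-1}(y)$ and hence $\sigma$ identifies $(\pi^*)^{-1}({\bf y})$ with $\pi^{-1}(y)$; with that identification your transfer of the inclusion $\pi^{-1}(y)\times\pi^{-1}(y)\subseteq\overline{\O}((x_1,x_2),T)$ is exactly the paper's computation.
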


\begin{proof}
Let $\pi:X\rightarrow Y$ be a proximal but not almost one-to-one
extension between minimal flows.
We consider its O-diagram:
\begin{equation*}
\xymatrix
{
X \ar[d]_{\pi}  &  X^* \ar[l]_{\sigma}\ar[d]^{\pi*} \\
Y &  Y^*\ar[l]^{\tau}
}
\end{equation*}
Recall that
$$Y^*=\overline{\{\pi^{-1}(y): y\in Y_c\}},\quad X^*=\{(x, {\bf y})\in X\times Y^*: x\in {\bf y}\},$$
where $Y_c$ is the set of continuous points of $\pi^{-1}: Y\rightarrow 2^X$.
Since $\pi$ is proximal but not almost one-to-one, $\pi^*$ is open proximal but not almost one-to-one.
By Theorem \ref{pwm}, $\pi^*$ is weakly mixing, and hence by Theorem \ref{thm-wm}, there is a residual subset $Y_0^*$ of $Y^*$ such that for each ${\bf y}\in Y_0^*$,  $(\pi^*)^{-1}({\bf y})$ contains a dense uncountable strongly scrambled subset $K^*$ such that
$$K^*\times K^*\setminus \D_{X^*}\subseteq {\rm Trans}(R_{\pi^*}).$$

Since $\tau$ is almost one-to-one, $\tau(Y_0^*)$ is a residual subset of $Y$. Let
$$Y_0=Y_c\cap \tau(Y_0^*).$$
Then $Y_0$ is a residual subset of $Y$. We need verify that  for each $y\in Y_0$,  $\pi^{-1}(y)$ contains an uncountable strongly scrambled subset $K$ satisfying that for any $x_1 \neq x_2\in K$
$$ \pi^{-1}(y)\times \pi^{-1}(y)\subseteq \overline{\O}((x_1,x_2),T).$$

Let $y\in Y_0$. Since $y\in Y_c\cap \tau(Y_0^*) $, $\tau^{-1}(y)=\pi^{-1}(y)$. Thus ${\bf y}=\pi^{-1}(y)\in Y_0^*$.
So $(\pi^*)^{-1}({\bf y})$ contains a dense uncountable strongly scrambled subset $K^*$ such that
$K^*\times K^*\setminus \D_{X^*}\subseteq {\rm Trans}(R_{\pi^*}).$
Let $$K^*=\{(x_\a, {\bf y}): \a\in \Lambda, x_\a\in {\bf y}\},$$
where $\Lambda$ is an uncountable index set. Now let $K=\sigma(K^*)=\{x_\a\}_{\a\in \Lambda}$.
First note that $$\pi(K)=\pi\sigma(K^*)=\tau\pi^*(K^*)=\tau({\bf y})=y,$$
and hence $K\subseteq \pi^{-1}(y)$.

Let $x_1,x_2\in K$ with $x_1\neq x_2$. We show that $\pi^{-1}(y)\times \pi^{-1}(y)\subseteq \overline{\O}((x_1,x_2),T)$. Let $y_1,y_2\in \pi^{-1}(y)={\bf y}$. Then
$$(y_1, {\bf y}), (y_2, {\bf y})\in (\pi^*)^{-1}({\bf y}).$$
Since $\left((x_1,{\bf y}), (x_2, {\bf y})\right)\in K^*\times K^*\setminus\D_{X^*} \subseteq {\rm Trans}(R_{\pi^*})$,
$$\left((y_1,{\bf y}), (y_2, {\bf y})\right)\in \overline{\O}\left(\big((x_1,{\bf y}), (x_2, {\bf y})\big), T\right).$$
It follows that $(y_1,y_2)\in \overline{\O}((x_1,x_2),T)$.
Thus $ \pi^{-1}(y)\times \pi^{-1}(y)\subseteq \overline{\O}((x_1,x_2),T).$ The proof is complete.
\end{proof}

As mentioned above, any minimal proximal flow is weakly mixing. In fact, for minimal proximal flows, one can say more.

\begin{thm}
Let $(X,T)$ be a proximal minimal flow. Then for every $x\in X$, the set
$$\{y\in X: \overline{\O}((x,y),T)=X\times X\}$$
is residual in $X$.
\end{thm}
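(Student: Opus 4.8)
The plan is to combine transitivity of $(X\times X,T)$ (which holds since $X$ is proximal, hence weakly mixing by Glasner's result quoted before Theorem~\ref{pwm}) with a Baire-category argument, upgraded by a key lifting step that uses proximality to pass from a ``generic'' companion point to the \emph{specific} point $x$. First I would fix $x\in X$ and let $W=\overline{\O}((x,x),T)$; more relevantly, I want to show that for a residual set of $y$ the orbit closure of $(x,y)$ is all of $X^2$. The obvious approach is: since $(X^2,T)$ is transitive and $X^2$ is a compact metric space, the set $\mathrm{Trans}(X^2)$ of transitive points is residual in $X^2$; a Kuratowski–Ulam / Fubini-for-category argument would then try to conclude that for residually many $y$, the slice $\{(x',y): x'\in X\}$ meets $\mathrm{Trans}(X^2)$ residually. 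But that gives residually many $y$ such that $(x',y)$ is transitive for \emph{some} $x'$, not for our fixed $x$. This is exactly where proximality must enter.

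The key step I expect to carry the proof: if $(x',y)$ is a transitive point of $X^2$ and $(x,x')\in P(X,T)$, then $(x,y)$ is \emph{also} a transitive point of $X^2$. Indeed, choose $t_i\in T$ with $t_i x'\to x$ and $t_i y\to z$ for some $z$ (passing to a subnet using compactness); since $(x,x')$ is proximal, there is actually a net along which $d(t_ix,t_ix')\to 0$, so $t_ix\to x$ as well, whence $(x,z)\in\overline{\O}((x,y),T)$. Now $(x,z)$ need not equal $(x,x')$, but one can run this more carefully: because $X$ is proximal, \emph{every} point of $X$ lies in $P[x]$, and in a proximal minimal flow orbit closures interact well with the circle operation. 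The cleaner route is: pick a minimal idempotent $u$ with $ux'=x$ (Proposition~\ref{prop-proximal}(3), using that $(x,x')\in P(X)$, after possibly swapping roles); then $u(x',y)=(x,uy)$ lies in $\overline{\O}((x',y),T)=X^2$, so $\overline{\O}((x,uy),T)=X^2$; but $(x,uy)\in\overline{\O}((x,y),T)$ would follow if $uy\in\overline{\O}_y$-orbit-wise consistent with $x$ staying put, which again needs the proximal pair $(x,x')$ to drag the first coordinate. Let me instead argue directly: $\mathrm{Trans}(X^2)$ is a $T$-invariant residual subset of $X^2$, and I claim it is also invariant under applying any $p\in\M$ that fixes the first coordinate appropriately.

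So here is the argument I would actually write. Let $G=\mathrm{Trans}(X^2)$, a dense $G_\delta$ in $X^2$ by transitivity. Let $\pi_1:X^2\to X$ be the first projection; it is open, so $\pi_1(G)$ is residual in $X$, but in fact $\pi_1$ is a factor map of minimal flows only after care — instead I use that $G$ is residual and the set $X\setminus\pi_1(G)$ is meager? That is false in general. The correct tool is: for each $x$, consider $G_x=\{y: (x,y)\in G\}$; Kuratowski–Ulam gives that $\{x: G_x \text{ is residual in }X\}$ is residual in $X$. Call this good set $X^*$. For $x\in X^*$ we are done immediately. For arbitrary $x\in X$: pick $x'\in X^*$, which is nonempty (in fact residual), and since $X$ is proximal there is a minimal idempotent $u$ with $u x'=x$. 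For each $y$ with $(x',y)\in G$ we get $u(x',y)=(x,uy)\in\overline{\O}((x',y),T)=X^2$, i.e. $(x,uy)\in G$. Now the map $y\mapsto uy$ need not be continuous, so I cannot directly push the residual set $G_{x'}$ forward; the remedy is to observe that $uy$ ranges over all of $X$ as $y$ ranges over $X$ (since $uX=X$ for minimal $X$), and more precisely over a set containing $G_{x}$-relevant points — here one invokes that $u\O(y,T)$ is dense, combined with $(x,uy)\in G$ for $y\in G_{x'}$, to conclude $(x,y)\in G$ for a residual set of $y$ via the continuity of $y\mapsto$ (orbit closure) on the residual set of continuity points, or by noting $\overline{\O}((x,y),T)\supseteq\overline{\O}((x,uy),T)=X^2$ directly since $uy=\lim t_iy$ with $t_ix\to x$ (possible because $(x,ux')=(x,x)$... ). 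The main obstacle, and the place to be most careful, is precisely this discontinuity of the idempotent action: I expect the clean fix is to choose the net $t_i\to u$ so that simultaneously $t_i x'\to x$ \emph{and} $t_i x\to x$, which is possible because $(x,x')$ is a proximal pair and $x$ is an almost periodic point, so $x$ returns near itself along the same directions that contract the proximal pair; then $(x,y)\in\overline{\O}((x,y),T)$ trivially while $t_i(x,y)\to(x,uy)$ shows $(x,uy)\in\overline{\O}((x,y),T)$, and chaining orbit closures finishes it for every $y\in G_{x'}$, hence for residually many $y$.
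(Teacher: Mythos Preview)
There is a genuine gap, and it occurs precisely at the transfer step you flag as ``the place to be most careful.'' Suppose $x'\in X^*$, $y\in G_{x'}$, and $u\in J$ is a minimal idempotent with $ux'=x$. You assert that $u(x',y)=(x,uy)\in G$, but your justification is only that $(x,uy)\in\overline{\O}((x',y),T)=X^2$, which is trivially true and says nothing about $(x,uy)$ being a \emph{transitive} point. In fact it never is (for nontrivial $X$): since $(X,T)$ is proximal, every pair in $X^2$ is proximal, and a minimal point of $(X^2,T)$ that is also a proximal pair must lie on the diagonal $\Delta_X$. As $u$ is a minimal idempotent, $(ux',uy)$ is a minimal point of $(X^2,T)$, hence $uy=ux'=x$ and $(x,uy)=(x,x)\in\Delta_X$. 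Its orbit closure is $\Delta_X$, not $X^2$. Thus the chain $\overline{\O}((x,y),T)\supseteq\overline{\O}((x,uy),T)=X^2$ collapses to $\overline{\O}((x,y),T)\supseteq\Delta_X$, which you already knew from proximality. The idempotent trick cannot move you off the diagonal, so Kuratowski--Ulam alone only gives the conclusion for residually many $x$, not for every $x$.

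The paper's proof avoids this obstruction by never invoking weak mixing or idempotents. It fixes $x$ from the start and shows directly that for every nonempty open $V\subseteq X$ and every basic open $U'\times V'\subseteq X^2$ there is $t\in T$ with $tx\in U'$ and $tV\cap V'\neq\emptyset$; this makes each set $A_i=\{y:\exists\,t,\ t(x,y)\in U_i\}$ dense and open, and the intersection over a countable base gives the residual set. The key input is not $2$-fold proximality but $n$-fold proximality: one covers $X=\bigcup_{j=1}^n t_jV$ by minimality, uses $(t_1x,\ldots,t_nx)\in P^{(n)}$ to find $t_0$ with every $t_0t_jx\in U'$, and then picks the $j$ for which $t_0t_jV$ meets $V'$. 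This simultaneous control of $x$ under all the $t_j$ is exactly what your idempotent argument cannot supply.
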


\begin{proof}
First we show that for any non-empty open subsets $V , U',V'$ of $X$, there is some $t\in T$ such that
\begin{equation}\label{b1}
 (\{tx\}\times tV)\cap U'\times V'\neq \emptyset.
\end{equation}
Since $(X,T)$ is minimal, there is a finite subset $\{t_1,t_2,\ldots, t_n\}$ of $T$ such that $X=\bigcup_{i=1}^n t_i V$. As $(X,T)$ is proximal, $(t_1x, t_2x, \ldots, t_n x)\in P^{(n)}$. And there is some $t_0\in T$ such that
$$t_0t_1x, t_0t_2 x, \ldots, t_0 t_n x\in U'.$$
Note that $t_0^{-1}V'\cap \bigcup_{i=1}^n t_iV= t_0^{-1}V'\cap X\neq \emptyset$,
there is some $j\in \{1,2, \ldots, n\}$ such that $t_0^{-1}V'\cap t_j V\neq \emptyset$. Thus
$$(\{t_0t_jx\}\times t_0t_jV)\cap U'\times V'\neq \emptyset.$$
So we have \eqref{b1}.

Let $\{U_i\}_{i=1}^\infty$ be a base for $X\times X$. By \eqref{b1},
the set
$$A_i=\{y\in X: \ \text{there is some }\ t\in T  \ \text{such that}\ t(x,y)\in U_i\}$$
is a dense open subset of $X$.
It follows that
$$\{y\in X: \overline{\O}((x,y),T)=X\times X\}=\bigcap_{i=1}^\infty A_i$$
is residual in $X$.
\end{proof}

Our question is that does the relative version of theorem above hold?
\begin{ques}
Let $\pi: (X,T)\rightarrow (Y,T)$ be a non-trivial open proximal extension of minimal flows. For each $y\in Y$ and each $x\in \pi^{-1}(y)$, is the subset
$$\{x'\in \pi^{-1}(y): \overline{\O}\left((x,x'), T\right)=R_\pi \}$$
residual in $\pi^{-1}(y)$?
\end{ques}

\section{A dichotomy theorem on almost proximal extensions}\label{section-main}

In this section, we show that almost proximal extension has the following dichotomy theorem: any almost proximal extension of minimal flows is either almost finite to one, or almost all fibers contain an uncountable strongly scrambled subset.

\begin{lem}\label{lem-lift-distal}
Let $\pi: (X,T)\rightarrow (Y,T)$ be a distal extension of minimal flows.
For any strongly scrambled set $K$ of $Y$, there is a strongly scrambled set $K'$ of $X$ such that
$\pi|_{K'}: K'\rightarrow K$ is one to one.
\end{lem}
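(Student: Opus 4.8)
The plan is to use the structure theory for distal extensions, specifically that $\pi$ is RIC and open, and to work with the enveloping semigroup action. First I would fix $y \in Y$ with $K \subseteq \pi^{-1}(\ldots)$ — wait, $K$ is a strongly scrambled set of $Y$, so the points of $K$ lie in various fibers. Actually the natural approach: since $\pi$ is distal, it is open, so $\pi^{-1}: Y \to 2^X$ is continuous (Theorem \ref{thm-open}). The idea is to pick a single point $x_0 \in X$ over some $y_0 \in K$, fix $u \in J_{y_0}$ with $ux_0 = x_0$, and then for each $y \in K$ choose $p_y \in \M$ with $p_y y_0 = y$; set $x_y = p_y x_0$ when this is well-defined as a minimal point, and take $K' = \{x_y : y \in K\}$. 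The map $\pi|_{K'}$ is then onto $K$ by construction, and I must check it is injective and that $K'$ is strongly scrambled.

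The key steps, in order: (1) Observe $\pi$ distal $\Rightarrow$ $\pi$ open $\Rightarrow$ $\pi^{-1}$ continuous; also every $x \in X$ is $\pi$-distal, so $ux = x$ for all $u \in J_{\pi(x)}$. (2) For a strong Li–Yorke pair $(y_1,y_2)$ in $Y$ (proximal plus recurrent in $Y^2$), lift it: since $(y_1,y_2) \in P(Y)$, there is a minimal idempotent $w$ with $wy_2 = y_1$ in the appropriate sense; choose $x_1 \in \pi^{-1}(y_1)$ and consider points of $\pi^{-1}(y_1) \times \pi^{-1}(y_2)$ inside $R^2$ — here I use that $\pi$ distal forces $(R_\pi, T)$... no, rather I use that for a distal extension, proximality in the base is "matched" by proximality upstairs when the downstairs pair is proximal: given $(y_1,y_2) \in P(Y)$ and $x_1 \in \pi^{-1}(y_1)$, there exists $x_2 \in \pi^{-1}(y_2)$ with $(x_1,x_2) \in P(X)$. (3) Handle recurrence: if $(y_1,y_2)$ is a recurrent point of $(Y^2,T)$, then since $\pi \times \pi : X^2 \to Y^2$ is a distal extension and distal extensions preserve recurrence downward-to-up along... — more precisely, in a distal extension recurrent points lift to recurrent points. (4) Do this consistently across all of $K$, not just pairs: fix a base point $y_0 \in K$ and $x_0 \in \pi^{-1}(y_0)$ with $u x_0 = x_0$ for $u \in J_{y_0}$; for each $y \in K$, the pair $(y_0,y)$ is proximal, so define $x_y$ to be the unique point of $\pi^{-1}(y)$ proximal to $x_0$ (uniqueness because $\pi$ is distal — two points of $\pi^{-1}(y)$ both proximal to $x_0$ would be proximal to each other, hence equal). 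Uniqueness gives injectivity of $\pi|_{K'}$ immediately. (5) Verify $K' = \{x_y\}$ is strongly scrambled: for $y_1 \neq y_2$ in $K$, show $(x_{y_1}, x_{y_2})$ is proximal (both are proximal to $x_0$, and proximality... is not transitive in general, so this needs care — use instead that $x_{y_1}, x_{y_2}$ are simultaneously proximal via a common minimal left ideal, or re-derive proximality of the pair directly from proximality of $(y_1,y_2)$ together with the distality of $\pi$) and recurrent in $(X^2,T)$ (lift recurrence of $(y_1,y_2)$).

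The main obstacle I anticipate is step (5), the consistency/transitivity issue: proximality is not an equivalence relation, so knowing each $x_y$ is proximal to the base point $x_0$ does not automatically make $(x_{y_1}, x_{y_2})$ proximal. The fix is to be more careful in the construction: rather than defining $x_y$ fiber-by-fiber via "proximal to $x_0$", I would use a single minimal left ideal $I$ of $E(X,T)$ (or a single minimal idempotent picked compatibly) — concretely, pick $p \in \M$ with $p x_0 = x_0$-type normalization and define $x_y$ using elements of one fixed minimal left ideal so that all pairs $(x_0, x_y)$ are proximal \emph{witnessed by the same ideal}, which does propagate to $(x_{y_1},x_{y_2})$ being proximal. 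Combined with the fact that $\pi$ distal makes $\pi \times \pi$ (restricted to $R_{\pi}$, or the relevant product) distal so that recurrence of $(y_1,y_2)$ in $Y^2$ pulls back to recurrence of $(x_{y_1},x_{y_2})$ in $X^2$, this yields that $K'$ is strongly scrambled with $\pi|_{K'}$ a bijection onto $K$.
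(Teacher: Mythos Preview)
Your proposal is correct and follows essentially the same route as the paper. The paper fixes $y_0\in K$ and $x_0\in\pi^{-1}(y_0)$, writes each $y_\alpha\in K$ as $v_\alpha y_0$ for a minimal idempotent $v_\alpha\in J(\M)$, sets $K'=\{v_\alpha x_0\}$, checks proximality of each pair via $v_\beta(v_\alpha x_0,v_\beta x_0)=(v_\beta x_0,v_\beta x_0)$ (using $v_\beta v_\alpha=v_\beta$ in $\M$), and checks recurrence by showing that the idempotent $u_{\alpha\beta}\in J(S_T\setminus T)$ fixing $(y_\alpha,y_\beta)$ must also fix $(v_\alpha x_0,v_\beta x_0)$ because $\pi$ is distal; this last step is exactly your ``$\pi\times\pi$ is distal, so recurrence lifts'' argument, and your ``single minimal left ideal'' fix for the transitivity obstacle is precisely the role played by $\M$. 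Your description of $x_y$ as ``the unique point of $\pi^{-1}(y)$ proximal to $x_0$'' coincides with $v_\alpha x_0$, and uniqueness (hence injectivity of $\pi|_{K'}$) follows from distality just as you say.
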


\begin{proof}
Let $K=\{y_\a\}_{\a\in \Lambda}$, where $\Lambda$ is an index set. Fix a point $y_0\in K$. By the assumption, $K\subseteq P[y_0].$ Then for each $\a\in \Lambda$, there is a minimal idempotent $v_\a\in J({\bf M})$ such that $y_\a=v_\a y_0$ . Thus
$$K=\{v_\a y_0\}_{\a\in \Lambda} \subseteq P[y_0].$$
Since $K$ is a strongly scrambled set, any pair $(y_\a,y_\b)$ is a recurrent point of $(Y\times Y, T)$ and there is some idempotent $u_{\a \b}\in J(S_T\setminus T)$ such that
$$u_{\a\b}(y_\a, y_\b)=(y_\a,y_\b).$$
Now choose a point $x_0\in \pi^{-1}(y_0)$, and let
$$K'=\{v_\a x_0\}_{\a\in \Lambda} .$$
For any $\a,\b\in \Lambda$, since $v_\a, v_\b$ are minimal idempotents, we have
$$v_\b (v_\a x_0, v_\b x_0)=(v_\b v_\a x_0, v_\b^2 x_0)=(v_\b x_0, v_\b x_0)$$
and hence $(v_\a x_0, v_\b x_0) \in P(X)$. By $u_{\a \b} y_\a = y_\a$, we have $\pi( u_{\a \b} v_\a x_0)=u_{\a \b} y_\a = y_\a= \pi( v_\a x_0)$. Since $\pi$ is distal, it follows that $u_{\a \b} v_\a x_0 =v_\a x_0$. Similarly, we have $u_{\a \b}v_\b x_0=v_\b x_0$. So
$$u_{\a \b} (v_\a x_0,v_\b x_0)=(v_\a x_0,v_\b x_0).$$
That is, $(v_\a x_0, v_\b x_0)$ is a recurrent point of $(X\times X,T)$. Thus $K'$ is a strongly scrambled subset of $X$. As $\pi$ is distal and each pair in $K'$ is proximal, $\pi|_{K'}: K'\rightarrow K$ is one to one. The proof is complete.
\end{proof}

\begin{thm}
Let $\pi: (X,T)\rightarrow (Y,T)$ be an almost proximal extension of minimal flows.
Then one of the following holds:
\begin{enumerate}
  \item $\pi$ is almost finite to one;
  \item there is a residual subset $Y_0\subseteq Y$ such that for every $y\in Y_0$, the fiber $\pi^{-1}(y)$ contains an uncountable strongly scrambled set.
\end{enumerate}
\end{thm}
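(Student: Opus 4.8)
The plan is to use the structure theorem for almost proximal extensions to reduce the problem to a proximal but not almost one-to-one extension — where Theorem~\ref{thm-pr-old} already produces scrambled sets — and then to transport those scrambled sets down to $X$ by following fibres coordinatewise. By Theorem~\ref{GP-Structure} there is a RIC-diagram with $\sigma',\tau'$ proximal and $\pi':X'\to Y'$ finite-to-one and equicontinuous, in particular distal. If some fibre of $\pi$ is finite we are in alternative~(1), so assume not; by Proposition~\ref{almost N-1} every fibre of $\pi$ is then infinite. I would first note that $\tau'$ cannot be almost one-to-one: if $\tau'^{-1}(\tau'(y'_*))=\{y'_*\}$ for some $y'_*\in Y'$, then, using $\pi\c\sigma'=\tau'\c\pi'$ and surjectivity of $\sigma'$, the fibre $\pi^{-1}(\tau'(y'_*))=\sigma'\big((\pi')^{-1}(y'_*)\big)$ would be finite. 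Hence $\tau':Y'\to Y$ is proximal but not almost one-to-one, and Theorem~\ref{thm-pr-old} supplies a residual $Y_0\subseteq Y$ such that for every $y\in Y_0$ the fibre $\tau'^{-1}(y)\subseteq Y'$ contains an uncountable strongly scrambled set $K_y$.

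Next I would spell out, using almost proximality, that $Y'$ is literally a flow of finite subsets of $\pi$-fibres. Since $|u\pi^{-1}(\bar y)|=N$ for every $\bar y$ and every $u\in J_{\bar y}$ (Proposition~\ref{a.p equivalenve}), the point $y'=u\c u\pi^{-1}(\bar y)$ appearing in the RIC-construction is just the $N$-element set $u\pi^{-1}(\bar y)$, and, $u$ being idempotent, any enumeration of it is fixed by $u$, hence a minimal point of $(X^N,T)$; distinct coordinates of a minimal point are never proximal, so this $N$-tuple has pairwise non-proximal coordinates. Since $Y'=\{p\c y':p\in\M\}$ and $y'$ is finite, every element of $Y'$ is $py'=\{px^{(1)},\dots,px^{(N)}\}$; here $px^{(k)}\ne px^{(l)}$ for $k\ne l$, for otherwise $(x^{(k)},x^{(l)})\in P(X)$ would contradict minimality. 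Thus every $\mathbf y\in Y'$ is an $N$-element subset of a single $\pi$-fibre carrying a distinguished enumeration which is a minimal point of $X^N$.

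Now fix $y\in Y_0$ and $y'_0\in K_y$, enumerate $y'_0=\{x_0^{(1)},\dots,x_0^{(N)}\}$ so that $z_0=(x_0^{(1)},\dots,x_0^{(N)})$ is minimal, list $K_y=\{y'_\alpha\}_{\alpha\in\Lambda}$ with $|\Lambda|>\aleph_0$, and, since each $y'_\alpha$ is proximal to $y'_0$ in $Y'$, write $y'_\alpha=v_\alpha y'_0$ for a minimal idempotent $v_\alpha$ (Proposition~\ref{prop-proximal}(3)); put $x_\alpha^{(j)}=v_\alpha x_0^{(j)}\in\pi^{-1}(y)$. I claim that for all $\alpha\ne\beta$ and all $j$ with $x_\alpha^{(j)}\ne x_\beta^{(j)}$, the pair $(x_\alpha^{(j)},x_\beta^{(j)})$ is a strong Li-Yorke pair. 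It is proximal since $v_\beta x_\alpha^{(j)}=(v_\beta v_\alpha)x_0^{(j)}=v_\beta x_0^{(j)}=v_\beta x_\beta^{(j)}$, using $pv=p$ for $v\in J,p\in\M$ (Proposition~\ref{prop-minimal-ideal}). It is recurrent since, taking an idempotent $w\in S_T\setminus T$ with $w(y'_\alpha,y'_\beta)=(y'_\alpha,y'_\beta)$, the map $w$ permutes the finite set $y'_\alpha$, yet each point of $y'_\alpha$ is proximal to its $w$-image (Proposition~\ref{prop-proximal}(1)) while distinct points of the minimal enumeration of $y'_\alpha$ are non-proximal, so $wx_\alpha^{(j)}=x_\alpha^{(j)}$, and similarly $wx_\beta^{(j)}=x_\beta^{(j)}$. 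Finally, a counting argument: if each column $C_j=\{x_\alpha^{(j)}:\alpha\in\Lambda\}$ ($1\le j\le N$) were countable, the uncountably many distinct $N$-element sets $y'_\alpha$ would all be subsets of the countable set $C_1\cup\dots\cup C_N$ — impossible; so some $C_j$ is uncountable, and choosing one index per element of $C_j$ produces an uncountable strongly scrambled subset of $\pi^{-1}(y)$. As $Y_0$ is residual, this is alternative~(2).

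I expect the descent from $Y'$ down to $X$ to be the main obstacle. The obvious attempt — lift $K_y$ through the distal extension $\pi'$ by Lemma~\ref{lem-lift-distal} and then apply $\sigma'$ — can genuinely fail, because the proximal map $\sigma'$ may collapse the lifted scrambled set to a countable (even one-point) set, there being no reason that the elements of $Y'$ partition $X$ unless $\pi$ is regular. The fix is not to pick a single representative point inside each $\mathbf y\in K_y$, but to follow all $N$ points of the enumeration $v_\alpha z_0$ simultaneously and pigeonhole over the $N$ columns. A secondary point requiring care is the explicit description of $Y'$ in the second paragraph, which is precisely where almost proximality is used in an essential way.
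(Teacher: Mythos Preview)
Your argument is correct, and it follows the same overall strategy as the paper: pass to a proximal but not almost one-to-one extension, apply the scrambled-set result there, then lift coordinatewise and pigeonhole over the $N$ coordinates to land back in $\pi^{-1}(y)$. The paper, however, takes a slightly longer route to the proximal extension: it first builds the auxiliary minimal flow $X^\#=\overline{\O}(z_0,T)\subseteq X^N$, spends a step proving that $\widetilde{\pi}:X^\#\to Y$ is \emph{regular}, then invokes Theorem~\ref{thm-reg} (the structure theorem for regular almost proximal extensions) to produce $X^\#\to Y^\#\to Y$, and only then applies Theorem~\ref{thm-prox}, Lemma~\ref{lem-lift-distal}, and the projection/pigeonhole. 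Your use of Theorem~\ref{GP-Structure} directly on $\pi$ bypasses the regularization step entirely; the $Y'$ from the RIC-diagram already consists of the $N$-element sets you need, and your recurrence argument (that $w$ must fix each $x_\alpha^{(j)}$ because it permutes a set whose distinct members are mutually non-proximal) is exactly the content of Lemma~\ref{lem-lift-distal} specialized to $\pi':X'\to Y'$. So your route is a genuine streamlining of the paper's proof, while the paper's detour through regularity has the advantage of making the intermediate factor $Y^\#$ a bona fide quotient of $X^\#$ (a partition), which clarifies why the column trick is available. Your final paragraph correctly identifies the real difficulty — that $\sigma'$ can collapse any single lifted column — and the pigeonhole over all $N$ columns is precisely how both proofs resolve it.
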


\begin{proof}
We show that if $\pi$ is not almost finite to one, then there is a residual subset $Y_0\subseteq Y$ such that for every $y\in Y_0$, the fiber $\pi^{-1}(y)$ contains an uncountable strongly scrambled set.

Let $y_0\in Y$ and $u\in J(\M)$ such that $uy_0=y_0$ and let $u\pi^{-1}(y_0)=\{x_1,x_2,\ldots, x_n\}$ for some $n\in \N$. Let $z_0=(x_1,x_2,\ldots, x_n)\in X^n$. Then $z_0$ is a minimal point of $(X^n,T)$. Let
$$X^\#=\overline{\O}(z_0, T)=\{pz_0\in X^n: p\in \M\}\subseteq X^n.$$
Then $(X^\#,T)$ is a minimal flow. Let
$$\psi: X^\#\rightarrow X, \ pz_0\mapsto px_1, \quad \widetilde{\pi}=\pi\circ \psi: X^\#\rightarrow Y,  pz_0\mapsto py_0, \ \forall p\in \M . $$

\begin{equation*}
  \xymatrix@R=0.5cm{
                &         {X}^\# \ar[dl]_{\psi} \ar[dd]^{\widetilde{\pi}}    \\
  X \ar[dr]_{\pi}      \\
                &         Y          }
\end{equation*}

We divide the proof into several steps.

\medskip
\noindent {\bf Step 1. }\ {\em $\widetilde{\pi}$ is regular. }
\medskip

Let $p_1, p_2\in \M$ such that $\widetilde{\pi}(p_1z_0)=\widetilde{\pi}(p_2z_0)$ and $(p_1z_0, p_2z_0)$ is minimal. Let $v\in J(\M)$ such that $v(p_1z_0,p_2z_0)=(p_1z_0,p_2z_0)$. Since $\widetilde{\pi}(p_1 z_0)=\widetilde{\pi}(p_2z_0)$, we have that $p_1y_0 = p_2y_0$. Hence $up_1^{-1}p_2y_0=y_0$. Now let
$$\chi: X^\#\rightarrow X^\#, \ p z_0\mapsto p(up_1^{-1}p_2) z_0.$$
First, we verify that $\chi$ is well defined. Let $p,q\in \M$ such that $pz_0=qz_0$. Since $z_0=(x_1,x_2,\ldots, x_n)$, it follows that $px_j=qx_j, 1\le j\le n$. By $up_1^{-1}p_2\pi^{-1}(y_0)=u\pi^{-1}(y_0)$, $$\{up_1^{-1}p_2x_1,up_1^{-1}p_2 x_2,\ldots, up_1^{-1}p_2 x_n\}=\{x_1,x_2, \ldots, x_n\}.$$
Thus
$$p(up_1^{-1}p_2)x_j=q(up_1^{-1}p_2)x_j, \ 1\le j\le n,$$
i.e., $\chi(pz_0)=p(up_1^{-1}p_2)z_0=q(up_1^{-1}p_2)z_0=\chi(qz_0)$. That is, $\chi$ is well defined. As $\widetilde{\pi}(\chi(pz_0))=p(up_1^{-1}p_2)y_0=py_0=\widetilde{\pi}(pz_0)$, $\chi\in {\rm Aut}_{\widetilde{\pi}}(X^\#,T)$.

Finally, note that
$$\chi(p_1z_0)=\chi(vp_1z_0)=vp_1(up_1^{-1}p_2)z_0=vp_2z_0=p_2z_0.$$
Thus $\widetilde{\pi}$ is regular.

\medskip
\noindent {\bf Step 2. }\ {\em $\widetilde{\pi}$ is almost proximal and not almost finite to one. }
\medskip

Since $\pi$ is not almost finite to one, it is easy to see that $\widetilde{\pi}$ is also not almost finite to one. Next we show it is almost proximal.
We show that $|u\widetilde{\pi}^{-1}(y_0)|<\infty$. Let $pz_0\in u\widetilde{\pi}^{-1}(y_0)$. Then
$up(x_1,x_2,\ldots, x_n)=upz_0=pz_0=(px_1,px_2,\ldots, px_n)$. It follows that
$$\{px_1,px_2,\ldots, px_n\}=\{x_1,x_2,\ldots, x_n\}=u\pi^{-1}(y_0).$$
Thus $(px_1,px_2,\ldots, px_n)$ is a permutation of $(x_1,x_2,\ldots,x_n)$.
Hence $|u\widetilde{\pi}^{-1}(y_0)|\le |S_n|<\infty$, where $S_n$ is the symmetric group on $\{x_1,x_2,\ldots, x_n\}$. That is, $\widetilde{\pi}$ is almost proximal.

\medskip
\noindent {\bf Step 3. }\ {\em There is a residual subset $Y_0\subseteq Y$ such that for every $y\in Y_0$, the fiber $\pi^{-1}(y)$ contains an uncountable strongly scrambled set. }
\medskip

Since $\widetilde{\pi}$ is regular and almost proximal, by Theorem \ref{thm-reg}, there is a finite to one equicontinuous extension $\phi: X^\#\rightarrow Y^\#$ and a proximal extension $\pi^\#: Y^\#\rightarrow Y$ such that $\widetilde{\pi}=\pi^\#\circ \phi$.

\begin{equation*}
  \xymatrix@R=0.5cm{
                &         {X}^\# \ar[dl]_{\psi} \ar[dd]^{\widetilde{\pi}} \ar[dr]^{\phi} &    \\
  X \ar[dr]_{\pi}     &       &  Y^\#  \ar[dl]^{\pi^\#}   \\
                &         Y         &        }
\end{equation*}

Since $\widetilde{\pi}$ is not almost finite to one, $\pi^\#$ is proximal but not almost one to one. By Theorem \ref{thm-prox}, there is a residual subset $Y_0\subseteq Y$ such that for every $y\in Y_0$, the fiber $(\pi^\#)^{-1}(y)$ contains an countable strongly scrambled set $K'_y$. Since $\phi$ is distal, by Lemma \ref{lem-lift-distal}, there is an countable strongly scrambled set $K^\#_y$ such that $\phi(K^\#_y)=K'_y$.
Note that $K^\#_y\subseteq X^\#\subseteq X^n$. Let
$K_j=\pi_j(K^\#_y), 1\le j\le n$, where $\pi_j$ is the projection from $X^n$ to $j$-th coordinate.
Since $$K^\#_y\subseteq K_1\times K_2\times \ldots \times K_n,$$
there is some $j_0\in \{1,2,\ldots, n\}$ such that $|K_{j_0}|$ is uncountable.
Since $K^\#$ is a strongly scrambled set of $X^\#$, $K_{j_0}$ is a strongly scrambled set
of $X$. As the diagram is commutative, it is easy to check that $K_{j_0}\subseteq \pi^{-1}(y)$.
The proof is complete.
\end{proof}

%A minimal flow $(X,T)$ is called {\em an almost proximally equicontinuous flow} if $\pi: X\rightarrow X_{eq}$ is almost proximal; it is called {\em almost finite automorphic} if $\pi: X\rightarrow X_{eq}$ is almost finite to one, where $X_{eq}$ is the maximal equicontinuous factor.

%\begin{thm}\label{null}\cite{HLSY}
%If a minimal flow $(X,T)$ is almost proximally equicontinuous but not almost finite automorphic, then $h_{\infty}(X,T)=\infty$.
%\end{thm}

\section{Examples}\label{section-example}

Since there is no non-trivial proximal minimal flow under abelian group action \cite[Theorem 3.4]{G76}, it is not easy to construct a proximal but not almost one to one extension of minimal flows. In fact this was a question by Furstenberg several years ago. Using category method, Glasner and Weiss constructed the first this kind of extensions of minimal flows and gave a positive answer to this question. In this section, using methods in \cite{CM06}, we give explicit examples of proximal but not almost one to one extensions. And examples constructed are uniformly rigid.

In this section, firstly we briefly introduce Glasner and Weiss' results, which we will use them later. For Glasner and Weiss' methods, refer to \cite{GW79} for more details. Then we show how to give explicit examples of almost proximal but not almost finite to one extensions.

\subsection{Glasner and Weiss' results}\label{GW-exam}\
\medskip

Let $(Y,f)$ be a minimal discrete flow and $Z$ be a compact metric space.
Let $H(Z,Z)$ be the space of all homeomorphisms of $Z$ equipped with the metric
$$\rho_Z(g,h)=\sup_{z\in Z}d_Z(g(z),h(z))+ \sup_{z\in Z}d_Z(g^{-1}(z),h^{-1}(z)).$$
With this metric $H(Z,Z)$ is a complete metric space and a topological group. Let $X=Y\times Z$. Let $H_s(X,X)$ be subspace of $H(X,X)$ which consists of homeomorphisms which fixes all subspace of $X$ of the form $\{y\}\times Z, y\in Y$. Such a homeomorphism $G$ is determined by a continuous map $\a: Y\rightarrow H(Z,Z)$, by $G(y,z)=(y, \a(y)(z))$. Put
$$\mathcal{S}(f)=\{G^{-1}\circ f\circ G: G\in H_s(X,X)\}.$$

(Here $f$ is identified with $f\times \id_Z,$ where $\id_Z$ is the identity map on $Z.$)

If $\G$ is a subgroup of $H(Z,Z)$, let $\G_s\subseteq H_s(X,X)$ be the subgroup of those elements of $H_s(X,X)$ which come from cocycles of $\G$. That is,
$$\G_s=\{G\in H_s(X,X): G(y,z)=(y,\a(y)(z)), \a\in C(Y,\G)\}. $$
Let
$$\mathcal{S}_\G(f)=\{G^{-1}\circ f\circ G: G\in \G_s\}.$$

\begin{thm}\cite[Theorem 1.]{GW79}\label{thm-GW1}
Let $\G$ be a pathwise connected  subgroup of $H(Z,Z)$ such that $(Z,\G)$ is a minimal flow. If $(Y,f)$ is minimal, then for a residual subset $\mathcal {R}\subseteq \overline{\mathcal{S}_\G(f)}$, $(X,F)$ is a minimal flow for every $F\in \mathcal{R}$.
\end{thm}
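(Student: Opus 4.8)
The plan is a Baire category argument inside the complete metric space $(H(X,X),\rho_X)$, of which $\overline{\mathcal{S}_\G(f)}$ is a closed, hence Baire, subspace. First I would record a $G_\delta$ description of minimality. Fix a countable basis $\{U_i\}_{i\ge1}$ of nonempty open subsets of $X$. For a homeomorphism $F$ of $X$, the flow $(X,F)$ is minimal if and only if every $F$-orbit is dense, equivalently $\bigcup_{n\in\Z}F^nU_i=X$ for every $i$; by compactness of $X$ this is the same as: for every $i$ there is $N\in\N$ with $\bigcup_{|n|\le N}F^nU_i=X$. Put
$$\mathcal{O}_{i,N}=\Big\{F\in\overline{\mathcal{S}_\G(f)}\ :\ \bigcup_{|n|\le N}F^nU_i=X\Big\}.$$
A routine shrinking argument (cover $X$ by finitely many compact sets $K_n\subseteq F^nU_i$; then $F^n(K_n)$ is a compact subset of $U_i$ and stays inside $U_i$ for all $G$ with $\rho_X(G,F)$ small) shows each $\mathcal{O}_{i,N}$ is open in $\overline{\mathcal{S}_\G(f)}$. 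Hence $\mathcal{R}:=\bigcap_i\bigcup_N\mathcal{O}_{i,N}$ is exactly $\{F\in\overline{\mathcal{S}_\G(f)}:(X,F)\ \text{minimal}\}$ and is $G_\delta$; by Baire it will be residual once we show each $\bigcup_N\mathcal{O}_{i,N}$ is dense. So it suffices to prove: given $i$ and a nonempty open $\mathcal{W}\subseteq\overline{\mathcal{S}_\G(f)}$, there are $F\in\mathcal{W}$ and $N$ with $\bigcup_{|n|\le N}F^nU_i=X$.

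Next I would reduce this to a perturbation statement about $f$ alone. Since $\G_s$ is a group, $\mathcal{S}_\G(f)=\{G^{-1}fG:G\in\G_s\}$ is a single $\G_s$-conjugacy class, dense in $\overline{\mathcal{S}_\G(f)}$; and for fixed $G_0\in\G_s$ the conjugation $A\mapsto G_0AG_0^{-1}$ is a self-homeomorphism of $H(X,X)$ that preserves $\mathcal{S}_\G(f)$, hence preserves $\overline{\mathcal{S}_\G(f)}$. Choosing $G_0$ with $F_0:=G_0^{-1}fG_0\in\mathcal{W}$, this homeomorphism carries $F_0$ to $f$, carries $\mathcal{W}$ to an open neighbourhood of $f$, and carries the covering property for $(F,U_i)$ to the covering property for $(G_0FG_0^{-1},G_0U_i)$. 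So it suffices to prove: for every nonempty open $V\subseteq X$ and every $\delta>0$ there is $H\in\G_s$ with $\rho_X(f,H^{-1}fH)<\delta$ and $N\in\N$ with $\bigcup_{|n|\le N}f^n(HV)=X$ (equivalently, $\bigcup_{|n|\le N}(H^{-1}fH)^n(V)=X$). Writing $H(y,z)=(y,\beta(y)(z))$ with $\beta\in C(Y,\G)$, one computes $H^{-1}fH(y,z)=(fy,\beta(fy)^{-1}\beta(y)(z))$, so $\rho_X(f,H^{-1}fH)<\delta$ amounts to saying that the coboundary $y\mapsto\beta(fy)^{-1}\beta(y)$ and its inverse stay $\delta$-uniformly close to $\id_Z$, i.e. $\beta$ varies slowly along $f$-orbits (though $\beta$ itself may wander over $\G$). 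Shrinking $V$ to a product box $V_Y\times V_Z$, the fibre of $\bigcup_{|n|\le N}f^n(H(V_Y\times V_Z))$ over $y$ is $\bigcup\{\beta(f^{-n}y)(V_Z):|n|\le N,\ f^{-n}y\in V_Y\}$, and one needs this to be all of $Z$ for every $y$.

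The construction of $\beta$ is where the two hypotheses on $\G$ enter, and is the heart of the matter. Minimality of $(Z,\G)$ and compactness of $Z$ give $g_1,\dots,g_k\in\G$ with $\bigcup_{j=1}^k g_j(V_Z)=Z$ (with a little room to spare after slightly shrinking $V_Z$). Pathwise connectedness of $\G$ lets us concatenate paths $e\to g_1\to\cdots\to g_k\to e$ into a loop $\gamma:[0,1]\to\G$ through all the $g_j$; since $[0,1]$ is compact and the group operations of $\G$ are continuous, a sufficiently fine sampling $e=h_0,h_1,\dots,h_m=e$ of $\gamma$ has all $h_\ell h_{\ell-1}^{-1}$ and their inverses within $\delta$ of $\id_Z$, while $\{h_0,\dots,h_m\}$ still $\delta$-refines $\{g_j\}$, so $\bigcup_\ell h_\ell(V_Z)=Z$. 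On the base side, minimality of $(Y,f)$ makes the return times of every point to $V_Y$ syndetic with a uniform gap bound $L=L(V_Y)$; more is true — for every $\epsilon>0$ there is $N=N(\epsilon)$ such that for all $y$ the window $\{f^ny:|n|\le N\}$ is $\epsilon$-dense in $Y$. The remaining, and principal, task is to build a continuous $\beta:Y\to\G$ with $\delta$-small coboundary whose values, along the $V_Y$-visits of each orbit within a uniformly bounded window, run through a $\delta$-net of $\{h_0,\dots,h_m\}$; then the fibre formula above yields $Z$ over every $y$, and the proof is complete. I expect this cocycle construction to be the main obstacle: $(Y,f)$ need not be zero-dimensional (it may be weakly mixing, with no nontrivial equicontinuous factor), so there is no Rokhlin tower and no small rotation factor from which to pull a ``phase function''; instead $\beta$ must be assembled directly — e.g. from a partition of unity subordinate to a finite cover of $Y$ by sets on which one prescribes where along the loop $\gamma$ to sit, exploiting the uniform $\epsilon$-density of orbit windows and the freedom in the length $m$ to reconcile slow variation with orbital sweeping. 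This is the content of the argument in \cite{GW79}, which I would follow.
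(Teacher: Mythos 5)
This theorem is quoted in the paper from \cite{GW79}; the paper gives no proof of its own, so the only thing to assess is whether your blind attempt would stand as a proof. Your Baire--category frame is correct and is indeed the natural (and Glasner--Weiss's) frame: $\overline{\mathcal{S}_\G(f)}$ is a closed subspace of the complete metric space $(H(X,X),\rho_X)$, minimality is the $G_\delta$ condition $\bigcap_i\bigcup_N\mathcal{O}_{i,N}$ with each $\mathcal{O}_{i,N}$ open by the compactness/shrinking argument, and conjugating by a fixed $G_0\in\G_s$ (which preserves $\mathcal{S}_\G(f)$, hence its closure) correctly reduces density to a perturbation statement at $f$ itself; the computation $H^{-1}fH(y,z)=(fy,\beta(fy)^{-1}\beta(y)(z))$ and the fibrewise description of $\bigcup_{|n|\le N}f^n(H(V_Y\times V_Z))$ are also right.

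The genuine gap is exactly the step you flag and then defer: producing, for an \emph{arbitrary} minimal $(Y,f)$, a continuous $\beta:Y\to\G$ whose coboundary $y\mapsto\beta(fy)^{-1}\beta(y)$ (and its inverse) is uniformly $\delta$-close to $\id_Z$, yet such that for \emph{every} $y\in Y$ the values $\beta(f^{-n}y)$ at the visits $f^{-n}y\in V_Y$, $|n|\le N$, run through a $\delta$-net of the loop $h_0,\dots,h_m$ covering $Z$. This is the entire analytic content of \cite[Theorem 1]{GW79}, and it is not routine: smallness of the coboundary only caps how fast $\beta$ can move along orbits, it does not force it to move at all, and a ``partition of unity subordinate to a finite cover of $Y$'' gives no a priori control that $\beta$ actually advances along the loop on every orbit window --- the danger is precisely a $\beta$ that is nearly $f$-invariant on some orbits (the constant $\beta$ has zero coboundary and sweeps nothing), and since $Y$ need not be zero-dimensional there is no Rokhlin tower or return-time partition from which to read off a uniform ``phase'' that increments by a definite amount per visit to $V_Y$. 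Glasner and Weiss resolve this with an explicit construction exploiting pathwise connectedness of $\G$ together with a carefully built slowly varying time-change function on $Y$; without supplying that construction (or an equivalent), your argument establishes the soft reductions but not the density of $\bigcup_N\mathcal{O}_{i,N}$, so as a standalone proof it is incomplete --- it ultimately rests on citing the very result to be proved.
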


\begin{thm}\cite[Theorem 3.]{GW79}\label{thm-GW2}
Let $\G$ be a pathwise connected  subgroup of $H(Z,Z)$ with the following property: for every pair of points $z_1,z_2\in Z$ there exist neighbourhoods $U$ and $V$ of $z_1$ and $z_2$ respectively, such that for every $\ep>0$ there exists $h\in \G$ with ${\rm diam}\ (h(U\cup V))<\ep$. Then for a residual subset $\mathcal {R}\subseteq \overline{\mathcal{S}_\G(f)}$, $(X,F)$ is a proximal extension of $(Y,f)$ for every $F \in \mathcal{R}$.
\end{thm}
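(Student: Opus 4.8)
The plan is to show that for a generic $F$ in the closure $\overline{\mathcal{S}_\G(f)}$, any two points in a common fiber $\{y\}\times Z$ are proximal for $(X,F)$. Since $\overline{\mathcal{S}_\G(f)}$ is a complete metric space (a closed subset of the complete group $H(X,X)$ conjugated by $f$), it is a Baire space, so it suffices to exhibit the proximality condition as a countable intersection of dense open sets. First I would fix a countable dense set $\{(y_k,z_k,z_k')\}_{k\in\N}$ of triples in $\{(y,z,z'): y\in Y,\ z,z'\in Z\}$, together with a sequence $\ep_m\downarrow 0$. For each pair $(k,m)$ define
\[
\mathcal{O}_{k,m}=\bigl\{F\in\overline{\mathcal{S}_\G(f)}:\ \exists\, n\in\N\ \text{with}\ d_Z\bigl(p_Z F^n(y_k,z_k),\ p_Z F^n(y_k,z_k')\bigr)<\ep_m\bigr\},
\]
where $p_Z$ is the projection of $X=Y\times Z$ onto $Z$. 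Each $\mathcal{O}_{k,m}$ is open, since the condition involves only finitely many iterates of $F$ and $F\mapsto F^n$ is continuous on $H(X,X)$. I would then argue $\bigcap_{k,m}\mathcal{O}_{k,m}$ is exactly the set of $F$ for which every fiber pair is proximal: if $\pi(x)=\pi(x')=y$ and $(x,x')$ is approximated by triples in our dense set, a diagonal/continuity argument passes the proximality (which is a closed-type condition once one has it along a subsequence of times going to a point $p\in E(X,F)$ with $px=px'$) from the dense triples to all fiber pairs. The details here are routine once density of $\mathcal{O}_{k,m}$ is established.

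The crux is density of each $\mathcal{O}_{k,m}$ in $\overline{\mathcal{S}_\G(f)}$. Fix $F_0\in\overline{\mathcal{S}_\G(f)}$; by definition of the closure, it suffices to work with $F_0$ of the form $G_0^{-1}\circ f\circ G_0$ for $G_0\in\G_s$, and then show $\mathcal{O}_{k,m}$ meets an arbitrary $\rho$-ball around $F_0$. Write $G_0(y,z)=(y,\a_0(y)(z))$ with $\a_0\in C(Y,\G)$. The idea is to perturb $\a_0$ to a new cocycle $\a\in C(Y,\G)$, equal to $\a_0$ outside a small neighborhood of the point $y_k$ (so the resulting conjugate $F=G^{-1}fG$ is $\rho$-close to $F_0$), but arranged so that near $y_k$ the homeomorphism $\a(y_k)$ contracts a neighborhood $U\cup V$ of the two points $\a_0(y_k)^{-1}(z_k)$ and $\a_0(y_k)^{-1}(z_k')$ to something of diameter $<\ep_m$. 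This is precisely where the hypothesis on $\G$ is used: for the pair of points $z_1=\a_0(y_k)^{-1}(z_k)$, $z_2=\a_0(y_k)^{-1}(z_k')$ we get neighborhoods $U,V$ and, for any prescribed $\delta>0$, an $h\in\G$ with $\operatorname{diam}(h(U\cup V))<\delta$. Pathwise connectedness of $\G$ then lets me interpolate between $\a_0(y)$ and the desired value along a path inside $\G$ as $y$ moves through a small neighborhood of $y_k$, keeping $\a$ continuous and $\G$-valued and keeping the sup-distance between $\a$ and $\a_0$ small wherever the orbit segment we care about does not visit that neighborhood.

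With such an $\a$, consider the orbit of $(y_k, z_k)$ under $F=G^{-1}fG$. Conjugating back, $F^n(y_k,z_k) = G^{-1}\bigl(f^n(y_k,\a(y_k)(z_k)^{\phantom{1}})\bigr)$ is governed by the $f$-orbit of $y_k$ in $Y$; since $(Y,f)$ is minimal, $f^n y_k$ returns to the small neighborhood of $y_k$ for some $n\ge 1$, and at that return time both $F^n(y_k,z_k)$ and $F^n(y_k,z_k')$ have their $Z$-coordinates lying in the image under $G^{-1}$ of a set of diameter $<\delta$ (coming from $h(U\cup V)$); choosing $\delta$ small relative to the modulus of continuity of the relevant maps forces the $Z$-coordinates to be within $\ep_m$, i.e. $F\in\mathcal{O}_{k,m}$. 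The main obstacle I anticipate is the bookkeeping in this perturbation: one must simultaneously (i) keep $\rho(F,F_0)$ small, which constrains how much $\a$ may differ from $\a_0$ and on how large a set, and (ii) guarantee a return time $n$ of $f^n y_k$ into the perturbation region at which the contraction has ``already happened'' — this needs the support neighborhood of the perturbation to be chosen after fixing a suitable return time via minimality, and then the $\rho$-closeness is recovered because that neighborhood can be taken as small as we like. Handling the case of a general $F_0\in\overline{\mathcal{S}_\G(f)}$ (not literally a conjugate) is a routine approximation step: first move to a nearby genuine conjugate, then perturb. Once all $\mathcal{O}_{k,m}$ are dense open, $\mathcal{R}=\bigl(\bigcap_{k,m}\mathcal{O}_{k,m}\bigr)$ is residual and every $F\in\mathcal{R}$ gives a proximal extension $(X,F)\to(Y,f)$, which is the assertion.
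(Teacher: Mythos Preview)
The paper does not prove this theorem; it is quoted from Glasner--Weiss \cite{GW79} without argument, so there is no proof in the paper to compare against. I will therefore simply assess your proposal.

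Your overall architecture is the right one and is essentially what Glasner--Weiss do: exhibit the proximality condition as a countable intersection of open subsets of the Baire space $\overline{\mathcal{S}_\G(f)}$, and prove each is dense by perturbing the conjugating cocycle, using path-connectedness of $\G$ to interpolate and the contraction hypothesis to produce proximality at a return time supplied by minimality of $(Y,f)$. However, your particular choice of open sets leaves a genuine gap.

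You base $\mathcal{O}_{k,m}$ on a countable dense set of \emph{point} triples $(y_k,z_k,z_k')$ and then claim that $F\in\bigcap_{k,m}\mathcal{O}_{k,m}$ forces every fiber pair to be proximal ``by a diagonal/continuity argument''. This does not follow. Membership in the intersection only says that each individual pair $\bigl((y_k,z_k),(y_k,z_k')\bigr)$ is proximal, and the proximal relation $P(X,F)=\bigcap_m\bigcup_n\{(x,x'):d(F^nx,F^nx')<\ep_m\}$ is merely $G_\delta$, not closed. A $G_\delta$ containing a dense subset of $R_\pi$ need not be all of $R_\pi$; the witnessing times $n=n(k,m)$ for different $k$ are unrelated, so no element $p$ of the enveloping semigroup collapsing a limiting pair can be extracted from them.

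The fix is already present in the hypothesis on $\G$: it gives, for each pair of points, \emph{open neighbourhoods} $U,V$ that are uniformly contractible by elements of $\G$. By compactness of $Z\times Z$ choose finitely many such pairs $(U_j,V_j)$ whose products cover $Z\times Z$, fix one $y_0\in Y$, and set
\[
\mathcal{O}_{j,m}=\Bigl\{F:\ \exists\,n\in\N,\ \mathrm{diam}\,p_Z F^{n}\bigl(\{y_0\}\times(\overline{U_j}\cup\overline{V_j})\bigr)<\ep_m\Bigr\}.
\]
These are open for the same reason yours are, and now $F\in\bigcap_{j,m}\mathcal{O}_{j,m}$ genuinely forces every pair in the fibre over $y_0$ to be proximal, since any $(z,z')$ lies in some $U_j\times V_j$ and a \emph{single} time $n$ contracts that whole set. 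Proximality over all fibres then follows from minimality of $(Y,f)$ exactly as in Lemma~\ref{proximal} of the present paper, whose proof uses only minimality of the base. Your density sketch adapts without essential change and is in fact more natural here, because the hypothesis is already phrased in terms of shrinking $U\cup V$. One small correction to that sketch: from the conjugacy formula $p_Z F^{n}(y,z)=\a(f^{n}y)^{-1}\a(y)(z)$ one sees the contracting $h\in\G$ must be planted in $\a$ near the return point $f^{n}y_0$, not near $y_k$ as you wrote.
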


According to \cite{GW79}, if we choose $Z={\bf P}^n, n\ge 1$ to be the projective $n$-space, and let $\G$ be pathwise connected component of $\id_Z$ in $H(Z,Z)$, then $\G$ satisfies the conditions of Theorem \ref{thm-GW1} and Theorem \ref{thm-GW2}. Thus for an arbitrary minimal infinite flow $(Y,f)$, there are many minimal homeomorphisms of $Y\times Z$ which are proximal but not almost one to one extensions of $(Y,f)$.

\subsection{Skew-product}\
\medskip

Let $(Y,f)$ be a discrete flow and $(Z,d_Z)$ a compact metric space. Denote the set of all the homeomorphisms of $Z$ to $Z$ with $H(Z,Z)$. For $\varphi _1, \varphi _2\in H(Z,Z)$, set
$$D_Z(\varphi _1,\varphi _2)=\sup_{z\in Z}d_Z(\varphi _1(z),\varphi _2(z)).$$ %+ \sup_{z\in Z}d(f^{-1}(z), g^{-1}(z)), $$

Assume $X=Y\times Z$ and let $\rho_X $ denote the max-metric on the product space $Y\times Z$,
$$\rho_X\left((y_1,z_1),(y_2,z_2)\right)=\max\{d_Y(y_1,y_2),d_Z(z_1,z_2)\},$$
where $d_Y$ is the metric of $Y$.

A continuous map $\sigma: Y\rightarrow H(Z,Z)$ is called a {\em cocycle}. By a given cocycle $\sigma$, one can define
\[f_\sigma : X\rightarrow X, \ (y,z)\mapsto \left(f(y), \sigma (y)(z)\right), \ \forall (y,z)\in X.\]
The new flow $(X,f_\sigma)$ is called a {\em skew-product flow}.

\medskip

For $(y,z)\in X$, set $f^n_\sigma (y,z)= (f^n (y), \sigma_n(y)(z))$.
Then
$$\sigma_n(y)=
\left\{
             \begin{array}{ll}
               \sigma(f^{n-1}y)\cdots \sigma(fy)\sigma(y), & n\ge 1 \hbox{;} \\
               \id_Z, & n=0 \hbox{;}\\
               \sigma(f^ny)^{-1}\cdots \sigma(f^{-2}y)^{-1}\sigma(f^{-1}y)^{-1}, & n<0 \hbox{.}
             \end{array}
           \right.
$$

\subsection{A cocycle}\
\medskip

Let $Y=Z(2)=\{0,1\}^\N$ with the metric
$$d(\a,\b) = \frac1{\min \left\{i\in \N:
\a_i\neq \b_i\right\}}, \a=(\a_i)_{i=1}^\infty, \b=(\b_i)_{i=1}^\infty \in
Z(2).$$
The map $$\tau : Z(2)\rightarrow Z(2)$$
is defined as follows: for
every $\a \in Z(2)$, $\tau (\a)=\a + 10000\dots$, where the addition
is modulo $2$ from the left to right. Obviously, $\tau$ is continuous.
Moreover, it can be shown that $\tau$ is invertible and $(Z(2),\tau)$ is an equicontinuous minimal
flow. The flow $(Z(2),\tau)$ is called {\em adding machine} or {\em
odometer}. Similarly, one can define $(Z(k)=\{0,1,\ldots, k-1\}^\N, \tau)$.

In the sequel an increasing sequence $\{n_i\}_{i=1}^\infty \subseteq \N$ is fixed. For any $\a\in Z(2)$, $\a$ can be written as
\begin{equation}\label{s1}
    \a=\a^1 \a^2 \a^3 \dots, \ \text{where}\ \a^k \ \text{is a block of $n_k$ digits of $\a$}.
\end{equation}
%Let $e: Z(2)\rightarrow \Z ^+$, $e(\a)= \a_1+2\a_2+2^2\a_3+\dots + 2^{i-1}\a_i+\dots$ with $\a =(\a_i)_{i=1}^\infty$ and $\underline n =e^{-1}(n)$ for $n \in \Z ^+.$
Let $e: \bigcup_{n=1}^\infty \{0,1\}^n\rightarrow \Z_+$ be the {\em
evaluation function}: for $x=x_1x_2\ldots x_n \in \{0,1\}^n$,
$$e(x)= x_1+2x_2+2^2x_3+\ldots + 2^{n-1}x_n$$
and let
$$\underline n = e^{-1}(n), \ \forall n \in \Z_+ .$$

\medskip

Let $Z=\bbs^1$ and
\begin{equation}\label{}
  \Phi=\{\varphi_k^j: 0\le j\le
2^{n_k}-2\}_{k=1}^{\infty}\subseteq H(\bbs^1,\bbs^1).
\end{equation}
We use $\Phi$ to construct a cocycle $\sigma: Z(2)\rightarrow H(\bbs^1,\bbs^1)$:
\begin{equation}\label{}
\sigma(\a)=
\left\{
  \begin{array}{ll}
    \varphi^{e(\a^k)}_{k}, & \hbox{$\a\neq 1^\infty$ and $\a^k$ is the first block in
(\ref{s1}) containing at least one zero digit;} \\
    \id_{\bbs^1}, & \hbox{$\a =1^\infty$.}
  \end{array}
\right.
\end{equation}

%Now we define a map $g: Z(2)\rightarrow \Phi$: when $\a\neq 1^\infty$ by $g(\a)=\varphi^{e(\a^k)}_{k}$, where $\a^k$ is the first block in (\ref{s1}) containing at least one zero digit; and when $\a =1^\infty$, we set $g(\a)=id$.

\begin{lem}
If $\Phi$ satisfies the following condition
\begin{equation}\label{C1}
  \displaystyle \lim_{k\to \infty} \max_{0\le j\le 2^{n_k}-2} D_{\bbs^1}(\varphi^{j}_k, \id_{\bbs^1} ) = 0,\tag{C1}
\end{equation}
then $\sigma: Z(2)\rightarrow H(\bbs^1,\bbs^1)$ is continuous and hence it is a cocycle.
\end{lem}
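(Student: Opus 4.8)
The plan is to show that $\sigma$ is continuous at every point $\a\in Z(2)$ by a standard $\ep$–$\d$ argument in which the metric $\rho$ on $H(\bbs^1,\bbs^1)$ (that is, $D_{\bbs^1}$ together with the symmetric term for inverses, as in Section~\ref{GW-exam}) is controlled. There are two cases: $\a\neq 1^\infty$ and $\a=1^\infty$. In the first case, let $\a^k$ be the first block in the decomposition \eqref{s1} that contains a zero. Since $\a^k$ contains a zero, any $\b$ agreeing with $\a$ on the first $n_1+n_2+\cdots+n_k$ coordinates also has $\b^k$ as its first block containing a zero, and moreover $e(\b^k)=e(\a^k)$, so $\sigma(\b)=\varphi_k^{e(\a^k)}=\sigma(\a)$. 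Thus $\sigma$ is in fact locally constant, hence continuous, at every $\a\neq 1^\infty$. This case uses only the structure of the cocycle and needs no hypothesis on $\Phi$.

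The only real work is continuity at $\a=1^\infty$, where $\sigma(1^\infty)=\id_{\bbs^1}$. Here I would fix $\ep>0$ and use hypothesis \eqref{C1} to pick $K$ so large that $\max_{0\le j\le 2^{n_k}-2} D_{\bbs^1}(\varphi_k^j,\id_{\bbs^1})<\ep/2$ for all $k\ge K$; by passing to inverses one also gets $\max_j D_{\bbs^1}((\varphi_k^j)^{-1},\id_{\bbs^1})<\ep/2$ for $k\ge K$, after possibly enlarging $K$ — note that $D_{\bbs^1}(g,\id)$ small forces $D_{\bbs^1}(g^{-1},\id)$ small by uniform continuity of inversion on the compact group, or directly since $d_{\bbs^1}(g^{-1}w,w)=d_{\bbs^1}(w,gw)$ after the substitution $w=g^{-1}z$ is not quite an equality of suprema, so one argues via uniform continuity instead. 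Then choose $\d>0$ so that $d(1^\infty,\b)<\d$ forces $\b$ to agree with $1^\infty$ on the first $n_1+\cdots+n_{K}$ coordinates. For such $\b$: either $\b=1^\infty$, in which case $\sigma(\b)=\id$ and we are done; or the first block $\b^k$ of $\b$ containing a zero has index $k\ge K+1$, so $\sigma(\b)=\varphi_k^{e(\b^k)}$ with $k\ge K+1$, and then $\rho_{\bbs^1}(\sigma(\b),\id_{\bbs^1})=D_{\bbs^1}(\varphi_k^{e(\b^k)},\id_{\bbs^1})+D_{\bbs^1}((\varphi_k^{e(\b^k)})^{-1},\id_{\bbs^1})<\ep$. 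This proves continuity at $1^\infty$.

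The main obstacle — really the only subtlety — is handling the inverse term in the metric on $H(\bbs^1,\bbs^1)$: hypothesis \eqref{C1} only bounds $D_{\bbs^1}(\varphi_k^j,\id)$, not $D_{\bbs^1}((\varphi_k^j)^{-1},\id)$, so one must invoke that inversion is (uniformly) continuous on the topological group $H(\bbs^1,\bbs^1)$, equivalently that $D_{\bbs^1}(g,\id)\to 0$ implies $D_{\bbs^1}(g^{-1},\id)\to 0$, to conclude that the bound on the $\varphi_k^j$ automatically yields a bound on their inverses for large $k$. Once that observation is in place, everything else is the routine locally-constant/prefix argument sketched above, and continuity of $\sigma$ gives that it is a cocycle by definition.
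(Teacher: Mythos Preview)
Your proof is correct and takes essentially the same approach as the paper: $\sigma$ is locally constant away from $1^\infty$, and at $1^\infty$ one invokes (C1) directly. Your one flagged obstacle is in fact no obstacle at all---the substitution $z\mapsto gz$ \emph{is} a bijection of $\bbs^1$, so $D_{\bbs^1}(g^{-1},\id)=\sup_z d_{\bbs^1}(g^{-1}z,z)=\sup_w d_{\bbs^1}(w,gw)=D_{\bbs^1}(g,\id)$ exactly; the paper's proof accordingly works only with $D_{\bbs^1}$ and never mentions inverses.
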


\begin{proof}
Let $\b_n, \a \in Z(2)$ such that $\b_n \to \a, n\to\infty$. We show that $\lim_{n\to \infty} D_{\bbs^1}(\sigma(\b_n), \sigma(\a))=0$.
	
If $\a \ne 1^\infty$, then one has $\sigma(\b _n)=\sigma(\a)$ when $n$ is large enough and $D_{\bbs^1}(\sigma(\b_n), \sigma(\a))=0$. Now assume that $\a =1^\infty$. For $\b_n$, we have that
$ \b_n=\b_n^1 \b_n^2 \b_n^3 \ldots$, where $\b_n^k$ is a block of $n_k$ digits of $\b_n$.
Thus $\sigma(\b_n)=\phi_k^{e(\b_n^k)}$, where $\b_n^k$ is the first block containing at least one zero digit. Since $\b_n\to\a, n\to\infty$ and $\lim_{k\to \infty} \max_{0\le j\le 2^{n_k}-2} D_{\bbs^1}(\varphi^{j}_k, \id_{\bbs^1} ) = 0$, we have that
$$\lim_{n\to \infty} D_{\bbs^1}(\sigma(\b_n), \id_{\bbs^1})=0.$$
That is, $\sigma$ is continuous.
\end{proof}

\medskip
Thus when \eqref{C1} is satisfied, $\sigma$ is a cocycle. Hence we have a skew product flow $(X=Z(2)\times \bbs^1, F_\Phi)$:
\begin{equation}\label{}
  F_\Phi\triangleq \tau_\sigma: X\rightarrow X, \
F_\Phi(\a, y)=(\tau(\a), \sigma(\a) y)=
\left\{
  \begin{array}{ll}
    (\tau(\a), \varphi^{e(\a^k)}_{k}(y)), & \hbox{$\a\neq 1^\infty$;} \\
    (\underline{0}, y), & \hbox{$\a =1^\infty$,}
  \end{array}
\right.
\end{equation}
where $\a^k$ is the first block in
(\ref{s1}) containing at least one zero digit when $\a\neq 1^\infty$.

%If $\a\neq 1^\infty$, obviously $\sigma$ is continuous at $\a$. For
%the point $\a=1^\infty$, when $\max_j D(\v^j_k, id)\to 0$ as $k\to
%\infty$, we have $\sigma$ is continuous at $1^\infty$. Hence if
%\begin{equation}\label{C1}
   % \lim_{k\to \infty} \max_{0\le j\le 2^{n_k}-2} D(\v^{j}_k, id) = 0,
%\end{equation}
%then $\sigma$ is a cocycle. And we can define a new flow
%$(X=Q\times \bbs^1, T_\sigma)$.

\subsection{The form of $F_\Phi ^n$}\
\medskip

Let $\a=\a^1\a^2\dots \in Z(2)$ as in (\ref{s1}) and $z_0\in \bbs^1$. Let
$$F^n_\Phi (\a ,z_0)=(\tau ^n\a,z_n).$$ We need to know the formula of $z_n$ for some special $n$.

For simplicity, we always assume the following conditions hold:
\begin{equation}\label{C2}
   \varphi^0_k = \id_{\bbs^1},\ \forall k\in \N,\tag{C2}
\end{equation}
and
\begin{equation}\label{C3}
  \Psi_k= \varphi_k^{2^{n_k}-2} \circ \varphi_k^{2^{n_k}-3} \circ \cdots \circ \varphi^1_k\circ \varphi^0_k=\id_{\bbs^1},\  \forall k\in\N.\tag{C3}
\end{equation}

%\medskip

First we have a formula for $\a= \underline 0$. By an easy induction, we have

\begin{lem}\label{lem-6.4}
Let $(\underline 0 ,y_0) \in Z(2)\times \bbs^1$ and $(\underline n , y_n)=F_\Phi ^n (\underline 0 , y_0)$.
Let $m_k=2^{n_1+n_2+\ldots +n_k}$ for all $k\in \N$. Then
\begin{equation}\label{eq1}
y_{m_k}=\varphi^0_{k+1}\circ\psi_k(y_0)=y_0,
\end{equation}	
and
\begin{equation}\label{2}
y_{s\cdot m_k}=\varphi^{s-1}_{k+1}\circ \varphi^{s-2}_{k+1}\circ \dots \circ \varphi^1_{k+1}\circ \varphi ^0_{k+1}(y_0), 1\le s <2^{n_{k+1}}.
\end{equation}
\end{lem}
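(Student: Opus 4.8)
The plan is to prove Lemma \ref{lem-6.4} by induction, tracking carefully what the cocycle $\sigma$ does along the orbit of $(\underline 0, y_0)$ under $F_\Phi$. The key observation is that if we start at $\underline 0 = 0^\infty$, then after $t$ steps we are at the point $\underline t \in Z(2)$, whose first coordinates are exactly the binary expansion of $t$ read from left to right (via $e$). So I would first describe, for each $t$ with $0 \le t < m_{k}$, which block $\a^j$ is ``the first block containing at least one zero digit'' when $\a = \underline t$, and hence which map $\varphi$ gets applied at step $t$.

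The main structural fact I would isolate: writing $t = s \cdot m_k + r$ with $0 \le r < m_k$ and $0 \le s < 2^{n_{k+1}}$, the orbit of $(\underline 0, y_0)$ under $F_\Phi$ from time $s\cdot m_k$ to time $(s+1)\cdot m_k$ traverses a full ``cycle at level $\le k$'' during which the composition of all the applied cocycle maps is exactly $\Psi_k \circ \Psi_{k-1} \circ \cdots \circ \Psi_1 = \id_{\bbs^1}$ by \eqref{C3} — except that at the single moment when the level-$k$ block rolls over, the map $\varphi^s_{k+1}$ is applied (here using \eqref{C2} to know that the ``$0$''-indexed maps are trivial, so they contribute nothing). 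More precisely I would prove by induction on $k$ the statement: $y_{s\cdot m_k} = \varphi^{s-1}_{k+1}\circ \cdots \circ \varphi^1_{k+1}\circ \varphi^0_{k+1}(y_0)$ for $1 \le s < 2^{n_{k+1}}$, together with $y_{m_k} = y_0$ as the base case $s=1$ (using \eqref{C3} and \eqref{C2} for level $k$ to collapse the intermediate composition). The passage from $y_{s\cdot m_k}$ to $y_{(s+1)\cdot m_k}$ then only adds the factor $\varphi^s_{k+1}$ on the left, because all lower-level contributions in between compose to the identity by the inductive hypothesis applied at level $k-1$ (the segment $[s\cdot m_k, (s+1)\cdot m_k)$ consists of $2^{n_k}$ sub-segments of length $m_{k-1}$, and the product of the level-$\le(k-1)$ maps over a full such super-segment is $\Psi_k = \id$).

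Concretely the induction should be organized as: base case $k=1$, where one checks directly from the definition of $F_\Phi$ and $e$ that as $t$ runs from $0$ to $m_1 = 2^{n_1}$, the maps applied are $\varphi^0_1, \varphi^1_1, \ldots, \varphi^{2^{n_1}-2}_1$ and then (at the rollover of the first block) $\varphi^0_2$; using \eqref{C3} for $k=1$ gives $y_{m_1} = \varphi^0_2(y_0) = y_0$ by \eqref{C2}, and the intermediate values give \eqref{2} for $k=0$ shifted — so I would actually phrase the inductive statement to directly yield both \eqref{eq1} and \eqref{2}. For the inductive step, assume the formula holds at level $k-1$; decompose $[0, m_{k+1})$ into blocks of length $m_k$, note that within each such block the level-$(k-1)$ formula tells us the accumulated map returns to (a left-multiple of) the identity at each multiple of $m_k$, and the only ``new'' contribution at the boundary $s \cdot m_k$ is $\varphi^{s-1}_{k+1}$ (the $(s-1)$-st map at level $k+1$, because $\underline{s\cdot m_k}$ has its level-$\le k$ digits all zero and level-$(k+1)$ digit-block equal to $\underline{s}$).

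The main obstacle I expect is purely bookkeeping: correctly matching the ``first block containing a zero digit'' description of $\sigma(\a)$ to the binary-odometer dynamics, i.e.\ verifying that when the orbit passes through $\underline t$ with $t = s\cdot m_k$, the relevant block really is the $(k+1)$-st one and its evaluation is $s$ (modulo the lower blocks being all zero), and simultaneously keeping \eqref{C2} and \eqref{C3} deployed at the right levels so that all the ``interior'' compositions telescope to $\id_{\bbs^1}$. There is no hard analysis here — it is the kind of statement that is ``easy by induction'' as the authors say — but writing the index arithmetic so that the two displayed formulas \eqref{eq1} and \eqref{2} come out exactly as stated, with the off-by-one in the exponents ($s$ versus $s-1$) consistent, will require care.
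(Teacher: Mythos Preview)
Your approach is correct and is exactly the ``easy induction'' the paper alludes to without writing out; the paper gives no further proof, so there is nothing to compare against beyond the phrase itself. One small bookkeeping correction worth flagging (which is precisely the off-by-one you anticipated): the level-$(k+1)$ map $\varphi^{s-1}_{k+1}$ enters not because of the block decomposition of $\underline{s\cdot m_k}$ but because $\sigma$ is evaluated at the \emph{previous} point $\underline{s\cdot m_k - 1} = 1^{n_1+\cdots+n_k}\,\underline{s-1}\,0^\infty$, whose first $k$ blocks are all $1$'s so that the $(k+1)$-st block (with evaluation $s-1$) is the first one containing a zero.
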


Now we see the general case. Let $\a=\a^1\a^2\dots \in Z(2)$ as in
(\ref{s1}) and $z_0\in \bbs^1$. We denote $F^n_\Phi (\a ,z_0)=(\tau ^n\a,z_n).$
Let
\begin{equation}\label{s4}
\psi_k ^+=\varphi^{e(\a^k)-1}_k\circ\varphi^{e(\a^k)-2}_k \circ \dots \circ \varphi^{1}_k\circ \varphi^{0}_k,\ \forall k\in \N,
\end{equation}
and
\begin{equation}\label{s5}
    \psi _k ^-=\varphi^{2^{n_k}-2}_k\circ\varphi^{2^{n_k}-3}_k\circ
\dots \circ \varphi^{e(\a^k)+1}_k\circ \varphi^{e(\a^k)}_k, \ \forall k\in \N.
\end{equation}
By the assumption (\ref{C3}), one has that $(\psi_k ^+)^{-1}=\psi ^-_k.$

\medskip

By an easy induction, we have

\begin{lem}\label{lem-6.5}
Let $\a=\a^1\a^2\dots \in Z(2)$ as in (\ref{s1}) and $z_0\in \bbs^1$. Let $F^n_\Phi (\a ,z_0)=(\tau ^n\a,z_n).$ Let $m_k=2^{n_1+n_2+\ldots +n_k}$ for all $k\in \N$. Then
\begin{equation}\label{s6}
F_\Phi ^{m_k -e(\a^1\a^2\dots\a^k)}(\a,z_0)=(0^{n_1+\ldots + n_k}\tau(\a^{k+1}\a^{k+2}\cdots),z_{m_k -e(\a^1\a^2\dots\a^k)}),
\end{equation}
where $z_{m_k-e(\a^1\a^2\dots \a^k)}=\varphi^{e(\a^{k+1})}_{k+1}\circ
\psi^-_k\circ \psi^-_{k-1}\circ\dots \circ \psi^-_1 (z_0). $
And
\begin{equation}\label{1}
F_\Phi^{m_k}(\a,z_0)=(\a^1\a^2\dots\a^k\tau(\a^{k+1}\a^{k+2}\dots),z_{m_k}),
\end{equation}
where $z_{m_k}=\psi^+_1\circ \psi^+_2\circ \dots \circ \psi^+_k \circ \varphi ^{e(\a^{k+1})}_{k+1}\circ \psi^-_k\circ \dots \circ \psi^-_1 (z_0).$
\end{lem}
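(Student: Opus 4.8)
\textbf{Proof proposal for Lemma~\ref{lem-6.5}.}

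The plan is to prove both formulas \eqref{s6} and \eqref{1} by induction, but the cleanest route is to first establish a general orbit formula and then specialize. First I would fix $\a=\a^1\a^2\cdots$ and track how the block structure of the base point evolves under $\tau$. Since $\tau$ adds $10000\dots$ from the left with carry, applying $F_\Phi$ repeatedly to $(\a,z_0)$ passes through base points $\tau^j\a$; crucially, starting from a base point whose first block has already been "zeroed out" makes the cocycle act through the next relevant block. So the first step is the bookkeeping lemma: for a point of the form $(0^{n_1+\ldots+n_{j-1}}\a^j\a^{j+1}\cdots,w)$ with $\a^j\neq 1^{n_j}$, the partial orbit of length $2^{n_j}-e(\a^j)$ carries the base coordinate to $0^{n_1+\ldots+n_j}\tau(\a^{j+1}\cdots)$ and composes the fiber map by $\psi^-_j=\varphi^{2^{n_j}-2}_j\circ\cdots\circ\varphi^{e(\a^j)}_j$ on the left — this is exactly the odometer running the $j$-th block from value $e(\a^j)$ up to overflow, each step contributing one $\varphi^i_j$. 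This is a direct induction on the number of odometer steps within a single block, using the definition of $\sigma$ and \eqref{C2}.

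Granting that block-level step, formula \eqref{s6} follows by chaining: starting from $(\a,z_0)$, the orbit first zeros block $1$ (after $2^{n_1}-e(\a^1)$ steps, composing by $\psi^-_1$), then block $2$ (after $2^{n_2}-e(\a^2)$ more steps, composing by $\psi^-_2$ on the left), and so on through block $k$, and then the next application of the cocycle at base point $0^{n_1+\ldots+n_k}\tau(\a^{k+1}\cdots)$ contributes $\varphi^{e(\a^{k+1})}_{k+1}$ — wait, one must be careful about whether $\tau$ has already touched block $k+1$; since $e(\a^1\cdots\a^k)<m_k$ unless all of $\a^1,\dots,\a^k$ are all-ones, and the time index in \eqref{s6} is $m_k-e(\a^1\cdots\a^k)$, the total number of odometer steps is arranged precisely so the carry has rippled through blocks $1,\dots,k$ exactly once and left block $k+1$ incremented by one, i.e.\ reading $\tau(\a^{k+1}\cdots)$; I would verify this by summing $\sum_{j\le k}(2^{n_j}-e(\a^j))\cdot 2^{n_1+\ldots+n_{j-1}}$ and checking it equals $m_k - e(\a^1\cdots\a^k)$ via the positional value of $e$. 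The extra leftmost factor $\varphi^{e(\a^{k+1})}_{k+1}$ in $z_{m_k-e(\a^1\a^2\dots\a^k)}$ then records that first step into block $k+1$.

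For \eqref{1}, I would push the orbit $e(\a^1\cdots\a^k)$ further steps so that blocks $1,\dots,k$ are restored to $\a^1,\dots,\a^k$ (the odometer, having overflowed, now counts blocks $1$ through $k$ back up from $0$ to $e(\a^j)$), which composes on the left by $\psi^+_1\circ\psi^+_2\circ\cdots\circ\psi^+_k$; combined with \eqref{C3} giving $(\psi^+_j)^{-1}=\psi^-_j$, the base coordinate returns to $\a^1\cdots\a^k\tau(\a^{k+1}\cdots)$ and the fiber map is exactly $\psi^+_1\circ\cdots\circ\psi^+_k\circ\varphi^{e(\a^{k+1})}_{k+1}\circ\psi^-_k\circ\cdots\circ\psi^-_1$, as claimed. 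The main obstacle is purely combinatorial: keeping the order of composition straight (new factors enter on the left, since $\sigma_n(y)=\sigma(f^{n-1}y)\cdots\sigma(y)$) and confirming that the carry pattern of the odometer matches the stated time indices. Once the single-block step is set up correctly, everything else is a finite composition of that step, so I would state the block step as a sublemma, prove it by induction on steps-within-a-block, and then assemble \eqref{s6} and \eqref{1} by concatenation without further computation.
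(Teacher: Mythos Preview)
Your block-by-block chaining is exactly the ``easy induction'' the paper has in mind, and the structure of your argument for \eqref{1} (count the first $k$ blocks back up from zero, picking up $\psi^+_k,\ldots,\psi^+_1$ in that order on the left) is correct. But your sublemma for \eqref{s6} contains two genuine bookkeeping errors that would make the chaining fail as written.

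First, the step count in the sublemma is off by a factor of $m_{j-1}$: from $(0^{n_1+\ldots+n_{j-1}}\a^j\a^{j+1}\cdots,w)$ it takes $(2^{n_j}-e(\a^j))\cdot m_{j-1}$ applications of $F_\Phi$, not $2^{n_j}-e(\a^j)$, to carry the base to $0^{n_1+\ldots+n_j}\tau(\a^{j+1}\cdots)$, since every increment of block $j$ costs a full $m_{j-1}$-cycle of the lower blocks. You seem to know this when you later write the weighted sum, so this may be a slip; but the sublemma as stated is false.

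Second, and more seriously, your chained sum $\sum_{j\le k}(2^{n_j}-e(\a^j))\cdot m_{j-1}$ equals $\sum_{j\le k}m_j - e(\a^1\cdots\a^k)$, not $m_k-e(\a^1\cdots\a^k)$. The discrepancy is the carry you flagged but did not resolve: after the $j$-th block step the base already sits at $0^{n_1+\ldots+n_j}\tau(\a^{j+1}\cdots)$, so block $j+1$ has value $e(\a^{j+1})+1$, and the $(j+1)$-st step must run only $2^{n_{j+1}}-e(\a^{j+1})-1$ increments. On the fiber side the same carry means each block step actually deposits $\varphi^{e(\a^{j+1})}_{j+1}\circ\psi^-_j$ on the left, not just $\psi^-_j$; when you chain, that stray $\varphi^{e(\a^{j+1})}_{j+1}$ combines with the next block's shortened product $\varphi^{2^{n_{j+1}}-2}_{j+1}\circ\cdots\circ\varphi^{e(\a^{j+1})+1}_{j+1}$ to give exactly $\psi^-_{j+1}$. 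With these two corrections in place the step count telescopes to $m_k-e(\a^1\cdots\a^k)$ and the fiber composition to $\varphi^{e(\a^{k+1})}_{k+1}\circ\psi^-_k\circ\cdots\circ\psi^-_1$, so your outline survives; but as written the sublemma and the sum are both wrong.
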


\begin{rem}
(i) To simply the calculation, if we require that
	\begin{equation*}
	\varphi^1_k =\varphi ^2_k =\dots =\varphi^{2^{n_k -1}-1}_k\triangleq\varphi_k
	\end{equation*}
	\begin{equation*}
	\varphi^{2^{n_k -1}}_k=\varphi^{2^{n_k -1}+1}_k=\dots =\varphi^{2^{n_k}-2}\triangleq \varphi^-_k=(\varphi_k)^{-1},
	\end{equation*}
then $\psi^+_k=(\varphi_k)^{c_k},$ $\psi^-_k=(\varphi^-_k)^{c_k},$ where $c_k=2^{n_k -1}-1-|e(\a^k)-2^{n_k-1}|.$	 Thus (\ref{1}) will be
		\begin{equation}z_{m_k}=(\varphi_1)^{c_1}\circ (\varphi_2)^{c_2}\circ\dots \circ (\varphi_k)^{c_k}\circ \varphi ^{e(\a^{k+1})}_{k+1}\circ (\varphi ^-_k)^{c_k}\circ \dots \circ (\varphi ^-_1)^{c_1}(z_0).	
		\end{equation}

\medskip
\noindent (ii) By (\ref{2}), (\ref{s6}), for $j<k,$ $1\leq s<2^{n_{j+1}}$, we have
	\begin{equation}
	F^{m_k-e(\a^1\a^2\dots \a^k)+s\cdot m_j}_\Phi (\a ,z_0)=(\underline s\tau(\a^{k+1}\a ^{k+2}\dots), z_{m_k -e(\a^1\a^2\dots \a^k)+s\cdot m_j}),
	\end{equation}
	where $z_{m_k -e(\a^1\a^2\dots \a^k)+s\cdot m_j}=\varphi ^{s-1}_{j+1}\circ\varphi ^{s-2}_{j+1}\circ \dots\circ \varphi ^{1}_{j+1} \circ\varphi ^{0}_{j+1}(z_{m_k -e(\a^1\a^2\dots \a^k)}).$

\end{rem}

%Let $\Psi(\a^1\a^2\ldots \a^k)= \psi(\a^1)\circ\ldots \circ
%\psi(\a^{k-1})\circ \psi(\a^k)$. Then (\ref{s7}) can be written as
%\begin{equation}\label{s8}
%\sigma_{m_k}(\a)=\Psi(\a^1\a^2\ldots
%\a^k)\circ\v^{e(\a^{k+1})}_{k+1}\circ \Psi(\a^1\a^2\ldots
%\a^k)^{-1}, \ \forall k\in \N.
%\end{equation}
%\medskip

\subsection{Uniform rigidity}\
\medskip

A discrete flow $(X,F)$ is {\em rigid} if there exists an increasing sequence $\{n_i\}_{i\in \N}$ in $\N$ such that $F^{n_i}x$ converges to $x$ as $i$ goes to infinity for every $x\in X$ ( i.e., $F^{n_i}$ converges pointwisely to the identity map). A flow $(X, F)$ is {\em uniformly rigid} if there exists an increasing sequence $\{n_i\}_{i\in \N}$ in $\N$ such that $\sup\limits_{x\in X} d(x,F^{n_i}x) \to 0$ as $i$ goes to infinity ( i.e., $F^{n_i}$ converges uniformly to the identity map). %$\lim_{i\to\infty} D(T^{n_i},\id )=0$
Refer to \cite{GM} for more information about topological rigidity.

\begin{lem}\label{easy1}
Let $\F$ be a finite subset of $H(X,X)$, where $(X,\rho_X)$ is a compact metric space. Then for any
$\ep>0$ there is a $\d>0$ such that for any continuous map $h: X\rightarrow X$
with $D_X(h,\id_X)<\d$, we have
$$D_X(\psi\circ h \circ \psi^{-1}
, \id_X)<\ep ,  \ \forall \ \psi \in \F.$$
\end{lem}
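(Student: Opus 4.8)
The plan is to prove this by a routine compactness argument, exploiting that $\F$ is finite so we need only handle one $\psi$ at a time and then take a minimum. First I would fix $\ep>0$ and a single $\psi\in\F$. Since $\psi$ is a homeomorphism of the compact metric space $X$, it is uniformly continuous; hence there is $\eta_\psi>0$ such that $\rho_X(\psi(x),\psi(x'))<\ep$ whenever $\rho_X(x,x')<\eta_\psi$. Now for any continuous $h:X\to X$ with $D_X(h,\id_X)<\eta_\psi$, and any $x\in X$, put $x'=\psi^{-1}(x)$; then $\rho_X(h(x'),x')=\rho_X(h(x'),\id_X(x'))<\eta_\psi$, so $\rho_X(\psi(h(x')),\psi(x'))<\ep$, i.e. $\rho_X(\psi\circ h\circ\psi^{-1}(x),x)<\ep$. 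Taking the supremum over $x\in X$ gives $D_X(\psi\circ h\circ\psi^{-1},\id_X)\le\ep$ (one can start with $\ep/2$ in the uniform-continuity step to get a strict inequality if desired).

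Having done this for each $\psi$, I would set $\d=\min_{\psi\in\F}\eta_\psi$, which is positive because $\F$ is finite. Then $D_X(h,\id_X)<\d$ forces $D_X(h,\id_X)<\eta_\psi$ for every $\psi\in\F$, and the estimate above applies simultaneously to all $\psi\in\F$, giving $D_X(\psi\circ h\circ\psi^{-1},\id_X)<\ep$ for all $\psi\in\F$, as required.

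There is no real obstacle here: the only point requiring a word is that each $\psi\in\F$, being a homeomorphism of a compact space, is uniformly continuous, and that a finite minimum of positive numbers is positive. One should also be a little careful that the quantity $D_X(h,\id_X)$ is well-defined for a merely continuous (not necessarily invertible) map $h$, which it is, since $D_X$ as used in this statement only involves the forward map; this matches the way $D_Z$ was defined on $H(Z,Z)$ in the earlier subsection and extends verbatim to continuous self-maps. So the whole argument is a two-line $\ep$–$\d$ manipulation plus a finite minimum, and I would present it essentially as written above.
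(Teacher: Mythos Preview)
Your argument is correct and is essentially the same as the paper's: both use uniform continuity of the (finitely many) homeomorphisms in $\F$ on the compact space $X$ and then take a finite minimum. The only cosmetic difference is that you invoke uniform continuity of $\psi$ and substitute $x'=\psi^{-1}(x)$, whereas the paper phrases it via uniform continuity of $\psi^{-1}$; your version in fact matches the conclusion $D_X(\psi\circ h\circ\psi^{-1},\id_X)<\ep$ more directly.
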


\begin{proof}
For any $\ep>0$ there exists $\d >0$ such that whenever $\rho_X(x_1,x_2)<\d,$  \[\rho_X(\psi^{-1}(x_1),\psi^{-1}(x_2)) <\ep, \ \forall \psi\in \F.\]
Then for continuous map $h: X\rightarrow X$
with $D(h,\id_X)<\d$, we have that
$$\rho_X(h(\psi(x)),\psi(x))<\d, \forall x\in X, \forall \psi\in \F.$$
It follows that \[\rho_X(\psi ^{-1}\circ h\circ \psi (x),x)<\ep,\  \forall x\in X, \forall \psi\in \F. \] That is,
\[D_X(\psi\circ h \circ \psi^{-1}
	, \id_X)<\ep , \ \forall \psi\in \F.\] The proof is complete.
\end{proof}

\begin{prop}\label{prop-rigid}
Suppose that $\Phi$ satisfies \eqref{C1}, \eqref{C2} and \eqref{C3}. Then the skew product flow     $(Z(2)\times \bbs^1,F_\Phi)$ is uniformly rigid.
\end{prop}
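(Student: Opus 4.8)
The plan is to exhibit an explicit sequence $\{N_i\}$ along which $F_\Phi^{N_i}$ converges uniformly to the identity on $X=Z(2)\times \bbs^1$. The natural candidates are the ``return times of the odometer'' $N_i = m_i = 2^{n_1+n_2+\cdots+n_i}$, since $\tau^{m_i}$ moves every $\a\in Z(2)$ only in the coordinates beyond the first $n_1+\cdots+n_i$, so $d_Y(\tau^{m_i}\a,\a)\to 0$ uniformly. Thus the first coordinate is taken care of automatically, and the whole problem reduces to controlling the second ($\bbs^1$) coordinate $z_{m_i}$ uniformly in $(\a,z_0)$.

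The key computational input is Lemma \ref{lem-6.5}: for $\a=\a^1\a^2\cdots$ and $(\tau^{m_k}\a, z_{m_k}) = F_\Phi^{m_k}(\a,z_0)$ we have
\[
z_{m_k}=\psi^+_1\circ \psi^+_2\circ \cdots \circ \psi^+_k \circ \varphi ^{e(\a^{k+1})}_{k+1}\circ \psi^-_k\circ \cdots \circ \psi^-_1 (z_0),
\]
where $(\psi^+_j)^{-1}=\psi^-_j$ by \eqref{C3}. So $z_{m_k}$ is a conjugate of the ``middle'' homeomorphism $\varphi^{e(\a^{k+1})}_{k+1}$ by $\Psi:=\psi^+_1\circ\cdots\circ\psi^+_k$. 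By \eqref{C1}, $D_{\bbs^1}(\varphi^{e(\a^{k+1})}_{k+1},\id)\le \max_{0\le j\le 2^{n_{k+1}}-2} D_{\bbs^1}(\varphi^j_{k+1},\id)$, which tends to $0$ as $k\to\infty$ uniformly in $\a$. The remaining issue is that the conjugating map $\Psi$ depends on $\a$ and could in principle amplify a small perturbation; but here is the crucial observation to exploit: each $\psi^+_j$ is itself a composition of maps drawn from the \emph{finite} set $\Phi_j=\{\varphi^0_j,\ldots,\varphi^{2^{n_j}-2}_j\}$, and more importantly, whatever $\Psi$ is, it is built only out of the factors $\varphi^i_j$ with $j\le k$. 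I would therefore apply Lemma \ref{easy1}: given $\ep>0$, there is $\d>0$ so that conjugating any $h$ with $D_{\bbs^1}(h,\id)<\d$ by any element of the finite set $\F_k:=\{\text{all finite compositions of length}\le \text{something fixed from }\Phi_1\cup\cdots\cup\Phi_{k}\}$ stays $\ep$-close to $\id$.

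The honest obstacle is that $\F_k$ grows with $k$, so Lemma \ref{easy1} cannot be applied with a single $\d$ valid for all $k$. The fix is a two-scale argument: fix $\ep>0$; choose $K$ large enough that $\max_{0\le j\le 2^{n_{k}}-2}D_{\bbs^1}(\varphi^j_{k},\id)$ is already so tiny for $k\ge K$ that, after conjugation by the (now \emph{fixed}, finite) collection $\F_{K}$ of compositions from $\Phi_1\cup\cdots\cup\Phi_{K}$, the result is still $<\ep/2$; and \emph{additionally} choose $K$ large enough that for $k\ge K$, $d_Y(\tau^{m_k}\a,\a)<\ep/2$ for all $\a$. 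Wait---this is not quite enough, because for $k\ge K$ the conjugator $\Psi=\psi^+_1\circ\cdots\circ\psi^+_k$ involves $\psi^+_j$ for $K<j\le k$ too, not only for $j\le K$. Here one uses \eqref{C1} again from the \emph{outside}: the outer factors $\psi^+_{K+1},\ldots,\psi^+_k$ are themselves uniformly close to $\id$ (each $\varphi^i_j$ with $j>K$ is within the tiny bound), so pre- and post-composing by them changes $D_{\bbs^1}$-distance to $\id$ by a controllably small amount. Concretely I would write $\Psi = A\circ B$ with $A=\psi^+_1\circ\cdots\circ\psi^+_K$ (in the fixed finite family) and $B=\psi^+_{K+1}\circ\cdots\circ\psi^+_k$ (uniformly $\ep'$-close to $\id$ with $\ep'$ controllable via \eqref{C1}), and similarly on the other side, then estimate
\[
D_{\bbs^1}\big(\Psi\circ\varphi^{e(\a^{k+1})}_{k+1}\circ\Psi^{-1},\ \id\big)
\]
by first absorbing $B, B^{-1}$ (close to $\id$, so cheap) and then applying Lemma \ref{easy1} to the fixed family for the $A$-conjugation. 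Choosing the free parameters in the right order ($\ep$; then $\d$ from Lemma \ref{easy1} applied to $\F_K$; then $K$ large via \eqref{C1} so the middle factor and the $B$-factors are all $\ll\d$) yields $\sup_{(\a,z_0)\in X}\rho_X\big(F_\Phi^{m_k}(\a,z_0),(\a,z_0)\big)<\ep$ for all $k\ge K$. Hence $F_\Phi^{m_k}\to \id_X$ uniformly, so $(Z(2)\times\bbs^1,F_\Phi)$ is uniformly rigid. The bookkeeping of which compositions actually occur in $\psi^+_j$ (a prefix-composition of $\varphi^0_j,\varphi^1_j,\ldots,\varphi^{e(\a^j)-1}_j$) is routine and uses only \eqref{C2}, \eqref{C3}; the one genuinely delicate point is the order of quantifiers in this two-scale estimate.
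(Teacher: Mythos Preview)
You correctly isolate the obstacle: the conjugating family $\{\psi^+_1\circ\cdots\circ\psi^+_k : \a\in Z(2)\}$ grows with $k$, so Lemma~\ref{easy1} does not furnish a single $\d$ valid for all $k$. But your two-scale fix breaks precisely at the claim that $B=\psi^+_{K+1}\circ\cdots\circ\psi^+_k$ is ``uniformly $\ep'$-close to $\id_{\bbs^1}$ with $\ep'$ controllable via \eqref{C1}''. Condition \eqref{C1} bounds only the individual generators $\varphi^j_\ell$; the map $\psi^+_\ell=\varphi^{e(\a^\ell)-1}_\ell\circ\cdots\circ\varphi^0_\ell$ is a composition of up to $2^{n_\ell}-1$ of them, and nothing in \eqref{C1}--\eqref{C3} controls how far this long product sits from $\id_{\bbs^1}$. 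Concretely, with \eqref{C5}-type maps one gets $\psi^+_{2\ell}(x)=x^{t_\ell^{\,e(\a^{2\ell})-1}}$ in angle coordinates, whose exponent reaches $t_\ell^{\,2^{n_{2\ell}-1}-1}$ --- exactly the quantity that is driven to $\infty$ in the proof of Proposition~\ref{prop-proximal}. So for the very examples the paper builds, the $\psi^+_\ell$ are \emph{not} close to $\id_{\bbs^1}$, and hence neither is your $B$.

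The gap is not a mere technicality. Mix rotations on the odd-indexed blocks with power maps on the even-indexed ones; for each even $k$ choose $\a$ so that $\psi^+_j$ is a rotation (or the identity) for $j<k$ while $\psi^+_k(x)=x^{s_k}$ with $s_k\to\infty$, and let $\varphi^{e(\a^{k+1})}_{k+1}$ be the rotation by $\theta_{k+1}$. Since conjugation by a rotation is an isometry for $D_{\bbs^1}$, the second coordinate of $F_\Phi^{m_k}$ is governed by $D_{\bbs^1}\!\big(\psi^+_k\,\varphi\,(\psi^+_k)^{-1},\id_{\bbs^1}\big)$, and in angle coordinates $\sup_{0\le y\le 1-\theta_{k+1}}\big((y+\theta_{k+1})^{s_k}-y^{s_k}\big)=1-(1-\theta_{k+1})^{s_k}\to 1-e^{-c}>0$ whenever $s_k\theta_{k+1}\to c\in(0,\ln 2)$. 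Thus $F_\Phi^{m_k}\not\to\id_X$ uniformly for such $\Phi$, even though \eqref{C1}--\eqref{C3} hold. The paper's own one-line appeal to Lemma~\ref{easy1} does not address the $k$-dependence of the conjugators either, so the difficulty you flagged is genuine and is not resolved there.
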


\begin{proof}
By the formula \eqref{1}, for any $(\a, z_0)\in X=Z(2)\times \bbs^1 ,$ we get
	\[F^{m_k}_\Phi (\a,z_0)=(\a^1\a^2\dots \a^k\tau(\a^{k+1}\a^{k+2}\dots), \psi(\a^1\a^2\dots \a^k)\varphi ^{e(\a^{k+1})} _{k+1}\psi^{-1}(\a^1\a^2\dots \a^k)(z_0)),\]
where $\psi(\a^1\a^2\dots \a^k)=\psi^+_1\circ\psi^+_2\circ\dots \circ \psi^+_k .$
By the assumption \eqref{C1} that $$\lim_{k\to \infty} \max_{0\le j\le 2^{n_k}-2} D_{\bbs^1}(\varphi^{j}_k, \id_{\bbs^1}) = 0,$$ and Lemma \ref{easy1}, we have that
\[\lim_{k\to \infty} D_X(F_\Phi^{m_k},\id_X) =0 ,\]
that is, $(Z(2)\times \bbs^1,F_\Phi)$ is uniformly rigid.
\end{proof}

%Since $\F$ is a finite set and any element is continuous, this lemma
%is easy to verify. Now it will be shown that we can choose $\{n_i\}$
%and $\{\v_k^j\}$ such that $$\lim_{k\to \infty} D(F_\sigma^{m_k},
%id) =0 ,$$ i.e. $(X,T_\sigma)$ is uniformly rigid.
%\medskip

\subsection{Minimality}\
\medskip

Now we specify the homeomorphisms $\{\varphi_{2k-1}^j: 1\le j\le
2^{n_{2k-1}}-2\}_{k=1}^\infty$ to make the
flow $(Z(2)\times \bbs^1,F_\Phi)$ minimal. Let
\begin{equation}\label{C4}
\varphi_{2k-1}^j(z)=
\left\{
             \begin{array}{ll}
               e^{i\theta_k}z, & 1< j \le 2^{n_{2k-1}-1}-1 \hbox{;} \\
               e^{-i\theta_k}z, & 2^{n_{2k-1}-1}-1<j\le 2^{n_{2k-1}}-2 \hbox{.}
             \end{array}
           \right.\tag{C4}
\end{equation}
where $\theta_k = \cfrac {2\pi}{2^{n_{2k-1}-1}-1}$. That is,
$\varphi^{j}_{2k-1}$ is a rotation of $\bbs^1$ with angle $\theta_k$ in the
anti-clockwise direction if $1< j \le 2^{n_{2k-1}-1}-1$ and in the
opposite direction otherwise. We remark that this construction satisfies the assumption \eqref{C3}, i.e., $\psi _{2k-1} =\id.$

\begin{prop}\label{prop-min}
Suppose that $\Phi$ satisfies \eqref{C1}, \eqref{C2}, \eqref{C3} and \eqref{C4}. Then the skew product flow     $(Z(2)\times \bbs^1,F_\Phi)$ is uniformly rigid and minimal.
\end{prop}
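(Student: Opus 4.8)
The plan is to observe that uniform rigidity is already available and then to concentrate all the work on minimality. Indeed, $\Phi$ satisfies \eqref{C1}, \eqref{C2} and \eqref{C3}, so Proposition \ref{prop-rigid} applies verbatim and yields that $(Z(2)\times\bbs^1,F_\Phi)$ is uniformly rigid, with return times $m_k=2^{n_1+\cdots+n_k}$. For minimality I would show that the only non-empty closed $F_\Phi$-invariant subset of $X=Z(2)\times\bbs^1$ is $X$ itself.

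The heart of the argument will be the claim that $\{\underline 0\}\times\bbs^1\subseteq\overline{\O}\big((\underline 0,z),F_\Phi\big)$ for every $z\in\bbs^1$. To prove it I would exploit the odd-indexed blocks, where by \eqref{C4} (together with $\varphi^0_{2\ell-1}=\id_{\bbs^1}$ from \eqref{C2}) the map $\varphi^j_{2\ell-1}$ is rotation of $\bbs^1$ by $\theta_\ell=\tfrac{2\pi}{2^{n_{2\ell-1}-1}-1}$ for $1\le j\le 2^{n_{2\ell-1}-1}-1$. Taking $k=2\ell-2$ in formula \eqref{2} of Lemma \ref{lem-6.4}, the partial composition $\varphi^{s-1}_{2\ell-1}\circ\cdots\circ\varphi^0_{2\ell-1}$ is then rotation by $(s-1)\theta_\ell$ for $1\le s\le 2^{n_{2\ell-1}-1}$, so that
$$F_\Phi^{\,s\cdot m_{2\ell-2}}(\underline 0,z)=\big(\underline{s\cdot m_{2\ell-2}},\,e^{\,i(s-1)\theta_\ell}z\big).$$
Two things then happen simultaneously as $s$ varies and $\ell\to\infty$: the second coordinate runs over all the $N_\ell$-th roots of unity applied to $z$, where $N_\ell=2^{n_{2\ell-1}-1}-1$, and since $\theta_\ell=2\pi/N_\ell$ this set is $\tfrac{2\pi}{N_\ell}$-dense in $\bbs^1$ and fills it up because $N_\ell\to\infty$; and the base point $\underline{s\cdot m_{2\ell-2}}$ begins with $n_1+\cdots+n_{2\ell-2}$ zeros, hence lies within $\tfrac{1}{n_1+\cdots+n_{2\ell-2}+1}$ of $\underline 0$. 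Since $\{n_i\}$ is increasing, $n_{2\ell-1}\to\infty$ and $n_1+\cdots+n_{2\ell-2}\to\infty$, so letting $\ell\to\infty$ gives the claim.

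Once the claim is in hand I would conclude by the standard ``spread one fibre over the whole base'' argument. Let $M$ be a non-empty closed $F_\Phi$-invariant subset of $X$. The projection $(\a,z)\mapsto\a$ is a factor map onto the minimal flow $(Z(2),\tau)$, so its image on $M$ is a non-empty closed invariant subset of $Z(2)$, hence all of $Z(2)$; in particular $(\underline 0,z)\in M$ for some $z\in\bbs^1$. By the claim and closedness and $F_\Phi$-invariance of $M$, $\{\underline 0\}\times\bbs^1\subseteq\overline{\O}\big((\underline 0,z),F_\Phi\big)\subseteq M$. Applying $F_\Phi^{\,n}$ and using that $\sigma_n(\underline 0)$ is a homeomorphism of $\bbs^1$ gives $\{\underline n\}\times\bbs^1=F_\Phi^{\,n}\big(\{\underline 0\}\times\bbs^1\big)\subseteq M$ for every $n\ge 0$. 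Since $\{\underline n:n\ge 0\}$ is dense in $Z(2)$ and $\bbs^1$ is compact, $M\supseteq\overline{\{\underline n:n\ge 0\}}\times\bbs^1=Z(2)\times\bbs^1=X$, so $M=X$ and $(X,F_\Phi)$ is minimal.

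The only real obstacle is setting up the first claim correctly: one must choose to work with the odd-indexed blocks, where the cocycle values are the rotations from \eqref{C4}, so that the partial compositions appearing in \eqref{2} sweep out a full cycle $0,\theta_\ell,2\theta_\ell,\dots,N_\ell\theta_\ell$ of rotations, and one must simultaneously keep track of the odometer coordinate so that it returns arbitrarily close to $\underline 0$ at exactly those times $s\cdot m_{2\ell-2}$. After that, everything is routine, and the only background facts used tacitly are that an increasing integer sequence diverges and that the forward $\tau$-orbit of $\underline 0$ is dense in $Z(2)$.
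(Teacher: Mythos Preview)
Your proof is correct and follows essentially the same route as the paper. The central claim that $\{\underline 0\}\times\bbs^1\subseteq\overline{\O}\big((\underline 0,z),F_\Phi\big)$ is proved exactly as in the paper, using the odd-indexed blocks and formula \eqref{2} to produce the iterates $F_\Phi^{\,s\cdot m_{2\ell-2}}(\underline 0,z)$ whose fibre coordinates sweep through a $\theta_\ell$-net in $\bbs^1$ while the base coordinates return to $\underline 0$; the only cosmetic difference is that the paper finishes by showing directly that every orbit is dense (passing from an arbitrary $(\alpha,z_0)$ to a limit point over $\underline 0$ via minimality of $(Z(2),\tau)$), whereas you use the equivalent closed-invariant-set characterization of minimality.
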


\begin{proof}
Let $\a\in Z(2)$ and $\bbs^1_{\a}={\a}\times \bbs^1.$
First we show that for any $y_0\in \bbs^1$,
\[\overline{\O}((\underline 0,y_0),F_\Phi)=Z(2)\times \bbs^1.\]
By (\ref{2}), for $1\le s<2^{n_{2k-1}}$, one has that
\[F^{s\cdot m_{2k-2}}_\Phi(\underline 0,y_0)=(0^{n_1+n_2+\dots n_{2k-2}}\underline s0^\infty ,y_{s\cdot m_{2k-2}}),\] where $y_{s\cdot m_{2k-2}}=\varphi^{s-1}_{2k-1}\circ \varphi^{s-2}_{2k-1}\circ \dots \circ \varphi^1_{2k-1}\circ \varphi ^0_{2k-1}(y_0).$
As a result, $\{y_{s\cdot m_{2k-2}}:\ 1\le s \le 2^{n_{2k-1}-1}-1\}$ is $\theta _k =\cfrac {2\pi}{2^{n_{2k-1}-1}-1}-$dense in $\bbs^1.$ Moreover, for any $y\in \bbs^1,$ there exists $1\le s_k\le 2^{n_{2k-1}-1}-1$ such that \[\lim_{k\to \infty}y_{s_k\cdot m_{2k-2}}=y.\] Thus \[F^{s_k\cdot m_{2k-2}}_\Phi(\underline 0 ,y_0)\to (\underline 0,y), k\to\infty.\] Thus $\bbs^1_{\underline 0} \subseteq \overline{\O}((\underline 0,y_0),F_\Phi).$
Since $F^n_\Phi(\bbs^1_{\underline 0})=\bbs^1_{\underline n},$ we have \[Z(2)\times \bbs^1=\overline{\bigcup_{n\in \Z _{+}} \bbs^1_{\underline n}} \subseteq  \overline{\O}((\underline 0,y_0),F_\Phi).\]
Thus \[\overline{\O}((\underline 0,y_0),F_\Phi)=Z(2)\times \bbs^1, \ \forall y_0\in \bbs^1.\]

Now we show that for any $(\a ,z_0 )\in Z(2)\times \bbs^1,$ one has \[\overline{\O} ((\a, z_0),F_\Phi)=Z(2)\times \bbs^1.\] As $(Z(2),\tau )$ is minimal, there exists some $\{p_k\}$ such that \[\lim _{k\to \infty}\tau ^{p_k} (\a)=\underline 0.\]
Without loss of generality, we may assume that
$$\lim_{k\to \infty} F^{p_{k}}_\Phi(\a,z_0 ) =(\underline 0, y_0)$$ for some $y_0 \in \bbs^1.$
Thus $Z(2)\times \bbs^1=\overline{\O}( (\underline 0, y_0),F_\Phi) \subseteq \overline{\O} ((\a , z_0),F_\Phi).$ Thus
$$\O ((\a, z_0),F_\Phi)=Z(2)\times \bbs^1.$$
Hence $(Z(2)\times \bbs^1,F_\Phi)$ is minimal. Moreover, by Proposition \ref{prop-rigid}, it is uniformly rigid. The proof is complete.
\end{proof}

%Let $x_0=({\bf 0}, y_0) \in X$. By (\ref{s3})
%$\{\sigma_{sm_{2k}}({\bf 0}): 1\le s < 2^{n_{2k+1}}\}$ is a $\cfrac
%{2\pi R}{2^{n_{2k+1}-1}-1}$-net of $\bbs^1$, where $R$ is the radius of
%$\bbs^1$. Since $$\lim_{k\to \infty} \tau^{sm_{2k}}({\bf 0})={\bf 0},$$
%we have $X_{{\bf 0}}=\{{\bf 0}\}\times \bbs^1\subseteq
%\w_{T_\sigma}(x_0)$. Thus $\w_{T_{\sigma}}(x_0)=X$. Because
%$\{X_{\tau^n({\bf 0})}=\{\tau^n({\bf 0})\}\times
%\bbs^1\}_{n=0}^{\infty}$ is dense in $Q\times \bbs^1$, it is easy to
%verify that $(X,T_\sigma)$ is minimal.

%\medskip
\subsection{Proximality}\label{1-a.p}\
\medskip

In this subsection, we specify the homeomorphisms $\{\varphi_{2k}^j: 1\le j\le
2^{n_{2k}}-2\}_{k=1}^\infty$ to make the
extension $\pi: (Z(2)\times \bbs^1, F_\Phi )\to (Z(2),\tau), \ (\a , z_0)\mapsto \a$ to be proximal.
Let
\begin{equation}\label{C5}
  \varphi_{2k}^j( e^{i{2\pi x }})=
\left\{
\begin{array}{ll}
\exp\big(i{2\pi  x^{t_k} }\big), & 1< j \le 2^{n_{2k}-1}-1 \hbox{;} \\
\exp\big( {i2\pi  x^{1/{t_k}}}\big), & 2^{n_{2k}-1}-1<j\le 2^{n_{2k}}-2 \hbox{,}
\end{array}
\right.\tag{C5}
\end{equation}
where $\{t_k\}$ is a decreasing sequence tending to $1.$
Notice that the construction of $\varphi _{2k} ^j$  also satisfies the convention \eqref{C3}, i.e., $\psi _{2k}=\id.$

\medskip
To verify that $\pi$ is proximal, we begin with the following lemma.

\begin{lem}\label{proximal}
	Let $\pi: (X,T)\rightarrow (Y,T)$ be an extension of minimal flows. If there exists $y_0 \in Y$ such that for any $x_1 , x_2 \in \pi ^{-1} (y_0) ,$ $(x_1,x_2)\in P(X,T),$ then $\pi$ is proximal.	
\end{lem}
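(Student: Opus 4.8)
The plan is to show that the hypothesis forces $u\pi^{-1}(y_0)$ to be a singleton for any $u\in J_{y_0}$, and then to propagate this to every fiber via Lemma \ref{lem-number}; once $|v\pi^{-1}(y)|=1$ for all $y\in Y$ and $v\in J_y$, proximality of $\pi$ follows from Proposition \ref{prop-proximal}(2). Equivalently, one may simply invoke the chain of equivalences in Proposition \ref{a.p equivalenve} in the case $N=1$; I will spell out the argument behind it.

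First I would fix $u\in J_{y_0}$ and put $A=u\pi^{-1}(y_0)$. By the remark preceding Lemma \ref{lem-number}, $A$ is an almost periodic set of $(X,T)$, since it is of the form $uZ$ with $Z=\pi^{-1}(y_0)$. Consequently, every two-point subset $\{x_1,x_2\}\subseteq A$ with $x_1\neq x_2$ is again an almost periodic set, i.e.\ $(x_1,x_2)$ is a minimal point of $(X^2,T)$; as noted in the introduction, a minimal point off the diagonal is never proximal, so $(x_1,x_2)\notin P(X,T)$. On the other hand $x_1,x_2\in\pi^{-1}(y_0)$, so by the hypothesis $(x_1,x_2)\in P(X,T)$ — a contradiction. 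Hence $A=u\pi^{-1}(y_0)$ has no two distinct elements, and since it is nonempty (it contains $ux$ for any $x\in\pi^{-1}(y_0)$) it is a singleton.

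Finally, for an arbitrary $y\in Y$ and $v\in J_y$, Lemma \ref{lem-number} gives $|v\pi^{-1}(y)|=|u\pi^{-1}(y_0)|=1$. Thus for any $x,x'\in\pi^{-1}(y)$ one has $vx=vx'$, and since $v$ acts on $X$ as an element of $E(X,T)$, Proposition \ref{prop-proximal}(2) yields $(x,x')\in P(X,T)$. As $y$ was arbitrary, $\pi$ is proximal. The only steps needing care are the identification of $u\pi^{-1}(y_0)$ as an almost periodic set and the use of the ``minimal $\Rightarrow$ not proximal'' fact for pairs; beyond assembling these ingredients there is no real obstacle.
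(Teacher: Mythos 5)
Your argument is correct, but it takes a different route from the paper. The paper's own proof is a direct translation argument: given $(x_1,x_2)\in R_\pi$ over a point $y_1$, minimality of $(Y,T)$ provides $p\in E(X,T)$ with $py_1=y_0$; then $px_1,px_2\in\pi^{-1}(y_0)$ are proximal by hypothesis, so $qpx_1=qpx_2$ for some $q\in E(X,T)$, and Proposition \ref{prop-proximal}(2) gives $(x_1,x_2)\in P(X,T)$ immediately. You instead first show that the hypothesis forces $u\pi^{-1}(y_0)$ to be a singleton (via the fact that a minimal pair off the diagonal cannot be proximal), propagate this to all fibers by Lemma \ref{lem-number}, and then collapse each fiber with an idempotent $v\in J_y$ --- in effect verifying condition (3) of Proposition \ref{a.p equivalenve} with $N=1$. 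Your steps are all sound: $u\pi^{-1}(y_0)$ is indeed an almost periodic set by the remark preceding Lemma \ref{lem-number}, and $vx=vx'$ with $v$ acting through $E(X,T)$ does yield proximality. What the paper's route buys is brevity and self-containedness (no appeal to Lemma \ref{lem-number} or to the minimal-pair fact); what your route buys is the slightly stronger structural conclusion $|v\pi^{-1}(y)|=1$ for every $y\in Y$ and $v\in J_y$, and an explicit link between this lemma and the Section \ref{section-ap} framework for almost proximal extensions.
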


\begin{proof}
	Let $(x_1,x_2)\in R_\pi$ and $\pi (x_1)=\pi (x_2)=y_1 .$ Since $(Y,T)$ is minimal, we can find $p\in E(X,T) $ such that $$py_1=y_0 .$$
	So $px_1,px_2\in \pi^{-1}(y_0).$ According to the assumption, there exists $q\in E(X,T)$ such that $$qpx_1=qpx_2.$$
	It follows that $$(x_1,x_2)\in P(X,T).$$
Thus $\pi$ is proximal.
\end{proof}

\begin{prop}\label{prop-proximal}
Suppose that $\Phi$ satisfies \eqref{C2}, \eqref{C4} and \eqref{C5}. Then the
extension $\pi: (Z(2)\times \bbs^1, F_\Phi )\to (Z(2),\tau), \ (\a , z)\mapsto \a$ is a proximal extension of uniformly rigid minimal flows.
\end{prop}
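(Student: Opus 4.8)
The plan is to verify the hypothesis of Lemma \ref{proximal} for a single fiber of $\pi$, and then to read off minimality and uniform rigidity from the results already established. Concretely, I would fix two points $(\underline 0,z_1),(\underline 0,z_2)\in\pi^{-1}(\underline 0)=\{\underline 0\}\times\bbs^1$, write $z_j=e^{2\pi i x_j}$ with $x_j\in[0,1)$, and aim to show they form a proximal pair. Since $F_\Phi$ acts on the $Z(2)$-coordinate as $\tau$, the two orbits always have identical first coordinates, so it suffices to produce times $n_j\to\infty$ along which the $\bbs^1$-coordinates of $F_\Phi^{n_j}(\underline 0,z_1)$ and $F_\Phi^{n_j}(\underline 0,z_2)$ converge to a common point; then Lemma \ref{proximal} yields proximality of the whole extension.

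For this I would use the explicit formula of Lemma \ref{lem-6.4} at the even scales. Putting $s_{k'}=2^{n_{2k'}-1}$ and applying that lemma with $k=2k'-1$ (so $k+1=2k'$ and $1\le s_{k'}<2^{n_{2k'}}$), the $\bbs^1$-coordinate of $F_\Phi^{\,s_{k'} m_{2k'-1}}(\underline 0,z_j)$ equals
$$\varphi^{s_{k'}-1}_{2k'}\circ\cdots\circ\varphi^{1}_{2k'}\circ\varphi^{0}_{2k'}(z_j).$$
By \eqref{C2} and \eqref{C5} each factor occurring here is either $\id_{\bbs^1}$ or the single contracting homeomorphism $c_{k'}\colon e^{2\pi i x}\mapsto e^{2\pi i x^{t_{k'}}}$, and at least $2^{n_{2k'}-1}-2$ of them equal $c_{k'}$; hence the composition is $c_{k'}^{\,\ell_{k'}}$ with $\ell_{k'}\ge 2^{n_{2k'}-1}-2$, i.e.\ $e^{2\pi i x}\mapsto e^{2\pi i x^{N_{k'}}}$ with $N_{k'}=t_{k'}^{\,\ell_{k'}}$. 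Thus the $\bbs^1$-coordinate of $F_\Phi^{\,s_{k'} m_{2k'-1}}(\underline 0,z_j)$ is precisely $e^{2\pi i x_j^{N_{k'}}}$.

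The key observation is then that $x^{N}\to 0$ as $N\to\infty$ for \emph{every} $x\in[0,1)$, with no exceptional point, because in these coordinates one always has $0\le x_j<1$. Choosing $\{t_k\}$ to decrease to $1$ slowly enough relative to the increasing sequence $\{n_i\}$ that $t_{k'}^{\,2^{n_{2k'}-1}}\to\infty$ (which is clearly arrangeable), we get $N_{k'}\to\infty$, hence $x_1^{N_{k'}},x_2^{N_{k'}}\to 0$, hence both $\bbs^1$-coordinates tend to $e^{0}=1\in\bbs^1$ and their distance tends to $0$. Therefore $F_\Phi^{\,s_{k'} m_{2k'-1}}(\underline 0,z_1)$ and $F_\Phi^{\,s_{k'} m_{2k'-1}}(\underline 0,z_2)$ approach the diagonal as $k'\to\infty$, so every pair in $\pi^{-1}(\underline 0)$ is proximal, and $\pi$ is proximal by Lemma \ref{proximal}. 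I expect the only point genuinely requiring care to be this compatibility between $\{t_k\}$ and $\{n_i\}$ forcing $N_{k'}\to\infty$; everything else is just plugging the homeomorphisms prescribed in \eqref{C2} and \eqref{C5} into Lemma \ref{lem-6.4}.

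Finally, for the claim that both flows are uniformly rigid and minimal: $(Z(2),\tau)$ is the odometer, which is minimal and uniformly rigid since $\tau^{m_k}$ alters only digits beyond the first $n_1+\cdots+n_k$, so $\tau^{m_k}\to\id_{Z(2)}$ uniformly. For the total space, the construction under \eqref{C2}, \eqref{C4}, \eqref{C5} also satisfies \eqref{C1} — the rotations by $\theta_k=2\pi/(2^{n_{2k-1}-1}-1)$ and the maps $e^{2\pi i x}\mapsto e^{2\pi i x^{t_k}}$ with $t_k\downarrow 1$ both converge uniformly to $\id_{\bbs^1}$ — and \eqref{C3} ($\psi_{2k-1}=\psi_{2k}=\id_{\bbs^1}$, as remarked right after \eqref{C4} and \eqref{C5}); hence Proposition \ref{prop-min} applies and $(Z(2)\times\bbs^1,F_\Phi)$ is uniformly rigid and minimal, completing the proposed argument.
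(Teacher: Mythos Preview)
Your proposal is correct and follows essentially the same approach as the paper: reduce to a single fiber via Lemma~\ref{proximal}, then use the formula~\eqref{2} of Lemma~\ref{lem-6.4} at times $s\cdot m_{2k-1}$ with $s=2^{n_{2k}-1}$ to see that the $\bbs^1$-coordinates are driven by iterates of $e^{2\pi i x}\mapsto e^{2\pi i x^{t_k}}$ toward the common point $1\in\bbs^1$, and invoke Proposition~\ref{prop-min} for minimality and uniform rigidity after checking \eqref{C1} and \eqref{C3}. You are in fact slightly more careful than the paper in one place: the paper simply writes the limit as $d_{\bbs^1}(1,1)=0$, implicitly assuming $t_k^{\,2^{n_{2k}-1}-1}\to\infty$, whereas you correctly flag that this requires $\{t_k\}$ to decrease to $1$ slowly enough relative to $\{n_i\}$, which is indeed an extra (easily arranged) compatibility condition not stated in \eqref{C5}.
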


\begin{proof}
First notice that \eqref{C4} and \eqref{C5} imply \eqref{C1} and \eqref{C3}.  It follows that $(Z(2)\times \bbs^1, F_\Phi )$ is uniformly rigid and minimal by Proposition \ref{prop-min}.

By Lemma \ref{proximal}, it suffices to show that for arbitrary $z_0^{(1)}=e^{i{2\pi x_1 }}, z_0^{(2)} =e^{i{2\pi x_2 }} \in \bbs^1,$ one has \[((\underline 0 ,z_0^{(1)}),(\underline 0 ,z_0^{(2)}))\in P(Z(2)\times \bbs^1,F_\Phi).\]
Let $d_{\bbs^1}$ be the metric of $\bbs^1$. Let $F^{m}_\Phi((\underline 0 ,z_0^{(j)}))=(\underline 0 ,z_m^{(j)})$ for $j=1,2$ and $m\in \N$. Let $m_k=2^{n_1+n_2+\ldots +n_k}$ for all $k\in \N$. Then by Lemma \ref{lem-6.4},
\begin{equation*}
  \begin{split}
    & \quad \lim _{k\to \infty}d_{\bbs^1}\left(z^{(1)}_{2^{n_{2k}-1}\cdot m_{2k-1}},z^{(2)}_{2^{n_{2k}-1}\cdot m_{2k-1}}\right)\\
& =\lim _{k\to \infty} d_{\bbs^1}\left (\exp\Big(i{2\pi  x_1^{t_k^{(2^{n_{2k}-1}-1)}} }\Big), \exp\Big(i{2\pi  x_2^{t_k^{(2^{n_{2k}-1}-1)}} }\Big)\right)\\
&=d_{\bbs^1}(1,1)=0.
   \end{split}
\end{equation*}
In particular,
$$\lim_{k\to\infty} \rho_{X}\left((F_\Phi\times F_\Phi)^{2^{n_{2k}-1}\cdot m_{2k-1}}\left((\underline 0 ,z_0^{(1)}),(\underline 0 ,z_0^{(2)})\right)\right)=0.$$
It follows that $((\underline 0 ,z_0^{(1)}),(\underline 0 ,z_0^{(2)}))\in P(Z(2)\times \bbs^1,F_\Phi)$.
\end{proof}

\subsection{Proximal extensions and $n$-weakly mixing extensions}\
\medskip

In Section \ref{subsection-wm}, we mentioned that the fact that $\pi: (X,T)\rightarrow (Y,T)$ is weakly mixing can not imply that $\pi$ is totally weakly mixing. In this subsection, we give such examples.
The main result of this subsection is as follows:

\begin{thm}\label{thm-exam-wm}
\begin{enumerate}
  \item There are proximal extensions of discrete minimal flows which are weakly mixing but not $3$-weakly mixing.
  \item There are proximal extensions of discrete minimal flows which are totally weakly mixing.
\end{enumerate}
\end{thm}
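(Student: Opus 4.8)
The plan is to obtain (1) from the machinery already developed and (2) after enlarging the fibre. For (1), I would take the skew product $(Z(2)\times\bbs^1,F_\Phi)$ of \S\ref{1-a.p}, i.e. with $\Phi$ satisfying \eqref{C2}, \eqref{C4} and \eqref{C5}. By Proposition \ref{prop-proximal} the first-coordinate projection $\pi\colon(Z(2)\times\bbs^1,F_\Phi)\to(Z(2),\tau)$, $(\a,z)\mapsto\a$, is a proximal extension of minimal flows; it is open (a product projection) and non-trivial, so Theorem \ref{pwm} shows that $\pi$ is weakly mixing. The remaining point is that each of the homeomorphisms appearing in \S\ref{1-a.p} --- the rotations in \eqref{C4}, the maps $e^{i2\pi x}\mapsto e^{i2\pi x^{t_k}}$ and $e^{i2\pi x}\mapsto e^{i2\pi x^{1/t_k}}$ in \eqref{C5}, and $\id_{\bbs^1}$ --- is orientation preserving, so $\sigma(\a)$ is orientation preserving for every $\a\in Z(2)$, and hence $F_\Phi\times F_\Phi\times F_\Phi$ preserves the cyclic order of triples lying in a common fibre of $\pi$. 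Therefore
$$W=\bigl\{(\a,z_1,z_2,z_3)\in R^3_\pi : z_1,z_2,z_3\ \text{pairwise distinct and positively cyclically ordered on}\ \bbs^1\bigr\}$$
is a nonempty, open, $(F_\Phi\times F_\Phi\times F_\Phi)$-invariant subset of $R^3_\pi$ whose closure is disjoint from the nonempty open set of triples that are distinct and negatively cyclically ordered. Hence $(R^3_\pi,T)$ is not transitive, i.e. $\pi$ is not $3$-weakly mixing. (Note the base $Z(2)$ has to be non-trivial here, consistent with the absence of non-trivial proximal minimal $\Z$-flows.)

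For (2) the circle fibre can no longer be used: running the argument above for $4$-tuples of distinct points --- whose cyclic configurations fall into exactly three classes, every homeomorphism of $\bbs^1$ permuting these three classes among themselves --- shows that no skew product over $(Z(2),\tau)$ with fibre $\bbs^1$ is $4$-weakly mixing. I would therefore carry out the construction of \S\ref{GW-exam} with a fibre $Z$ admitting a pathwise connected subgroup $\G\le H(Z,Z)$ such that (i) $(Z,\G)$ is minimal, (ii) $\G$ has the contraction property in the hypothesis of Theorem \ref{thm-GW2}, and (iii) for every $n\ge1$ the diagonal action of $\G$ on $Z^n$ has a dense orbit. Any fibre carrying no finite ``circle-type'' combinatorial invariant will serve, for instance the Hilbert cube $Z=[0,1]^{\N}$ with $\G$ its identity component, or an infinite-dimensional fibre assembled, in the spirit of \S\ref{GW-exam} and \cite{CM06}, from blocks of increasing size, the structure maps at stage $k$ acting on the $k$-th block so as to combine strong contraction (to force proximality) with enough spreading of $n$-tuples (to force $n$-weak mixing) along disjoint families of odometer towers.

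Given such $Z$ and $\G$, Theorems \ref{thm-GW1} and \ref{thm-GW2} supply a residual set of cocycles $F\in\overline{\mathcal S_\G(\tau)}$ for which the total space $(Z(2)\times Z,F)$ is minimal and the first-coordinate projection $\pi$ is proximal. It then remains to guarantee, for the same $F$, that $(R^n_\pi,T)=(Z(2)\times Z^n,F^{(n)})$ is transitive for every $n\ge2$, where $F^{(n)}$ is the skew product over $(Z(2),\tau)$ with fibre $Z^n$ and structure group $\G$ acting diagonally. For fixed $n$ and fixed basic open sets $U,V$ in $Z(2)\times Z^n$, the set of those $F$ for which some power of $F^{(n)}$ maps $U$ onto a set meeting $V$ is open, and it is dense by the same steering-and-perturbation argument as in the proof of Theorem \ref{thm-GW1}, this time using (iii). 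Intersecting these countably many residual sets (over all $n\ge2$ and a countable base for $Z(2)\times Z^n$) with the residual sets giving minimality of $(Z(2)\times Z,F)$ and proximality of $\pi$ produces a single cocycle $F$ for which $\pi$ is a proximal extension of minimal flows that is $n$-weakly mixing for every $n$, i.e. totally weakly mixing; and, just as in Propositions \ref{prop-rigid} and \ref{prop-proximal}, one checks that this example is uniformly rigid.

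The soft half is (1). The real work in (2) is to reconcile proximality --- which forces the fibre maps to contract uniformly along one fixed sequence of times --- with total weak mixing --- which forces them to pull $n$-tuples apart at other times. The point I expect to be the main obstacle is to show that, once $\G$ is genuinely flexible (so that $Z^n$ carries no circle-type obstruction to transitivity for any $n$), the condition ``$(R^n_\pi,T)$ is transitive'' is dense, hence residual, in $\overline{\mathcal S_\G(\tau)}$ simultaneously for all $n\ge2$, so that the relevant countable intersection of residual sets is non-empty. This is precisely the step at which the methods of \cite{GW79} and \cite{CM06} are brought in.
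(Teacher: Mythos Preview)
Your argument for (1) is exactly the paper's: the same skew product, openness plus proximality gives weak mixing via Theorem \ref{pwm}, and orientation preservation of all the fibre maps in \eqref{C4}--\eqref{C5} kills $3$-weak mixing by a cyclic-order obstruction.

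For (2) your strategy matches the paper's as well: isolate conditions (i)--(iii) on $(Z,\G)$, prove a residuality statement for $n$-weak mixing under (iii) by rerunning the proof of \cite[Theorem 4]{GW79} (this is stated in the paper as Theorem \ref{thm-GW4}), and intersect with the residual sets from Theorems \ref{thm-GW1}--\ref{thm-GW2}. The difference is in the choice of fibre. The paper simply takes $Z=\mathbf{P}^n$ with $n\ge 2$ and $\G$ the identity component of $H(Z,Z)$, pointing to \cite{GW79} for the verification of (i)--(iii); this is concrete and already documented. You instead reach for the Hilbert cube or an ad hoc infinite-dimensional fibre. That is not wrong in spirit, but you would still owe a verification that the diagonal $\G$-action on $Z^n$ is transitive for every $n$; this is not automatic from the absence of a ``circle-type'' invariant, and for the Hilbert cube it requires nontrivial input about its homeomorphism group. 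The projective-space choice buys you exactly this step for free. Your side remark that no circle-fibre skew product can be $4$-weakly mixing is a nice sharpening, though the paper does not state it.
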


\begin{proof}[Proof of (1) of Theorem \ref{thm-exam-wm}]
Suppose that $\Phi$ satisfies \eqref{C2}, \eqref{C4} and \eqref{C5}. We show that the
extension $$\pi: (Z(2)\times \bbs^1, F_\Phi )\to (Z(2),\tau), \ (\a , z)\mapsto \a$$ is weakly mixing but not $3$-weakly mixing. Since $\pi$ is open proximal by Proposition \ref{prop-proximal}, it is  weakly mixing by Theorem \ref{pwm}.
Now we show that $\pi$ is not $3$-weakly mixing, i.e., $(R_\pi ^3,F_\Phi^{(3)})$ is not transitive.

We may regard $R_\pi ^3$ as $Z(2)\times \bbs^1 \times \bbs^1\times \bbs^1.$ Suppose $(R_\pi ^3,F_\Phi^{(3)})$ is transitive, and let $(\a,x_1,x_2,x_3)\in Z(2)\times \bbs^1 \times \bbs^1\times \bbs^1$ be a transitive point. Let $y_1, y_2,y_3$ be distinct points of $\bbs^1$ and $\b\in Z(2)$. Then $(\b, y_1,y_2,y_3)$ in the orbit closure of the transitive point $(\a,x_1,x_2,x_3)$. By the construction of $\Phi$ (\eqref{C4} and \eqref{C5}), $F_\Phi$ preserves orientation. It follows that $(\b, y_1,y_3,y_2)$ can not be in the orbit closure of the transitive point $(\a,x_1,x_2,x_3)$.
Thus $(R_\pi ^3,F_\Phi^{(3)}) $ is not transitive.
\end{proof}

For the proof of (2) of Theorem \ref{thm-exam-wm}, we need some preparations. We use the notations in Subsection \ref{GW-exam}.

\begin{thm}\cite[Theorem 4.]{GW79}\label{thm-GW3}
Let $\G$ be a pathwise connected  subgroup of $H(Z,Z)$ such that $(Z,\G)$ is a weakly mixing flow. Then for a residual subset $\mathcal {R}\subseteq \overline{\mathcal{S}_\G(f)}$, $(X,F)$ is a weakly mixing extension of $(Y,f)$ for every $F\in \mathcal{R}$.
\end{thm}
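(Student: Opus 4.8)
The plan is a Baire--category argument inside $\overline{\mathcal{S}_\G(f)}$, which is a closed subset of $H(X,X)$ with the complete metric analogous to $\rho_Z$ (uniform distance of homeomorphisms and of their inverses), hence a Baire space. The first observation is that every $F\in\overline{\mathcal{S}_\G(f)}$ is a skew product over $(Y,f)$: for $G(y,z)=(y,\a(y)(z))$ one has $(G^{-1}\circ f\circ G)(y,z)=(f(y),\a(f(y))^{-1}\a(y)(z))$, so every $F\in\mathcal{S}_\G(f)$ has the form $F(y,z)=(f(y),\sigma_F(y)(z))$ with $\sigma_F\in C(Y,\G)$, and this form passes to uniform limits. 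Hence $\pi\colon(X,F)\to(Y,f)$, $(y,z)\mapsto y$, is always a factor map; identifying $R_\pi$ with $Y\times Z\times Z$, the restriction of $F\times F$ acts as $\hat F(y,z_1,z_2)=(f(y),\sigma_F(y)(z_1),\sigma_F(y)(z_2))$, and the assertion ``$(X,F)$ is a weakly mixing extension of $(Y,f)$'' is exactly that $(R_\pi,\hat F)$ is topologically transitive. Since $(Y,f)$ is infinite minimal, $Y$ — and hence $R_\pi$ — has no isolated points, so this is equivalent to: for all non-empty open $U,V\subseteq R_\pi$ there is $n$ with $\hat F^{\,n}U\cap V\ne\emptyset$. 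Fixing a countable base $\{W_i\}$ of $R_\pi$ and putting $\mathcal{O}_{i,j}=\{F\in\overline{\mathcal{S}_\G(f)}:\ \hat F^{\,n}W_i\cap W_j\ne\emptyset\text{ for some }n\}$, each $\mathcal{O}_{i,j}$ is open (the witnessing $n$ persists under small perturbations of $F$), so it suffices to prove each $\mathcal{O}_{i,j}$ is dense; then $\mathcal{R}=\bigcap_{i,j}\mathcal{O}_{i,j}$ is the required residual set. (Intersecting with the residual set of Theorem~\ref{thm-GW1} one may in addition make every $F\in\mathcal{R}$ minimal.)

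For density, since $\mathcal{O}_{i,j}$ is open and $\mathcal{S}_\G(f)$ is dense, it is enough to approximate an arbitrary $F_0=G_0^{-1}\circ f\circ G_0\in\mathcal{S}_\G(f)$, with $G_0(y,z)=(y,\a_0(y)(z))$ and $\a_0\in C(Y,\G)$, within a given $\ep>0$ by some $F_1\in\mathcal{S}_\G(f)$ with $\hat F_1^{\,N}W_i\cap W_j\ne\emptyset$ for some $N$. Pick basic boxes $A\times B_1\times B_2\subseteq W_i$ and $A'\times B_1'\times B_2'\subseteq W_j$, with $A,A'$ open in $Y$ and $B_k,B_k'$ open in $Z$. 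Three ingredients will be combined. First, since $(Z,\G)$ is weakly mixing, $(Z\times Z,\G)$ is transitive, so $\{g\in\G:\ g(B_1\times B_2)\cap(B_1'\times B_2')\ne\emptyset\}$ is non-empty and open; fix $g$ in it. Second, since $(Y,f)$ is infinite minimal, $\{n\ge1:\ A\cap f^{-n}A'\ne\emptyset\}$ is infinite, and for every height $M$ there is a Rokhlin-type column — a non-empty open $A_0$, contained in any prescribed non-empty open set, with $\overline{A_0},\overline{fA_0},\dots,\overline{f^{M-1}A_0}$ pairwise disjoint (choose a point $y_0$ in the prescribed set, disjoint closed neighbourhoods of $y_0,fy_0,\dots,f^{M-1}y_0$, and intersect their preimages). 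Third, since $\G$ is pathwise connected, there is a path $\Theta\colon[0,1]\to\G$, uniformly continuous for $\rho_Z$, with $\Theta(0)=\id_Z$ and $\Theta(1)$ any prescribed element of $\G$.

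Now we assemble the perturbation. Choose $N$ large with $A\cap f^{-N}A'\ne\emptyset$ and a column of height $2N+1$ whose base $A_0$ lies in $A\cap f^{-N}A'$; fix $w_0$ in an inner core $A_0'$ with $\overline{A_0'}\subseteq A_0$, and set $\Theta(1):=\a_0(w_0)\circ g^{-1}\circ\a_0(f^Nw_0)^{-1}\in\G$. Build a continuous $\psi\colon Y\to[0,1]$ that vanishes off the column, equals $\min(j,2N-j)/N$ on the core of level $j$, and tapers back to $0$ in each level's collar (possible by Urysohn, since the levels have disjoint closures); one checks $|\psi(fy)-\psi(y)|\le 1/N$ for all $y$. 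Set $H(y,z)=(y,\Theta(\psi(y))(z))$ and $F_1:=(H\circ G_0)^{-1}\circ f\circ(H\circ G_0)\in\mathcal{S}_\G(f)$, whose cocycle is $\sigma_{F_1}(y)=\a_0(f(y))^{-1}\circ\Theta(\psi(f(y)))^{-1}\circ\Theta(\psi(y))\circ\a_0(y)$. Telescoping this product along $w_0,fw_0,\dots,f^{N}w_0$ — using $\psi=0$ on the core of level $0$ and $\psi=1$ on the core of level $N$ — gives $\sigma_{F_1}(f^{N-1}w_0)\circ\cdots\circ\sigma_{F_1}(w_0)=\a_0(f^Nw_0)^{-1}\circ\Theta(1)^{-1}\circ\a_0(w_0)=g$. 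As $w_0\in A$, $f^Nw_0\in A'$, and $g(B_1\times B_2)\cap(B_1'\times B_2')\ne\emptyset$, picking $z_k\in B_k$ with $g(z_k)\in B_k'$ yields $(w_0,z_1,z_2)\in W_i$ and $\hat F_1^{\,N}(w_0,z_1,z_2)\in W_j$, i.e.\ $F_1\in\mathcal{O}_{i,j}$. Finally, $F_1$ is $\ep$-close to $F_0$: $\sigma_{F_1}(y)$ differs from $\sigma_{F_0}(y)=\a_0(f(y))^{-1}\circ\a_0(y)$ only through the increment $\Theta(\psi(f(y)))^{-1}\circ\Theta(\psi(y))$ conjugated by $\a_0(f(y))$; this increment has $\rho_Z$-size at most the modulus of uniform continuity of $\Theta$ at $1/N$, and since $\{\a_0(y)^{\pm1}:y\in Y\}$ is a compact, hence equicontinuous (Arzel\`a--Ascoli), family of homeomorphisms — in the spirit of Lemma~\ref{easy1} — conjugating by it does not amplify, so the distance from $F_1$ to $F_0$ is bounded by a modulus evaluated at $1/N$, which is $<\ep$ once $N$ is large enough.

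The step that is the main obstacle is precisely this perturbation design: one must produce a \emph{net} change of prescribed size $\Theta(1)$ in the cocycle while keeping the \emph{total} perturbation below $\ep$. Pathwise connectedness of $\G$ is what makes this possible — it lets the change be spread over a long column so that the per-step increment of the cocycle is as small as one pleases (the role of $1/N$) — but the accompanying estimate is not automatic: it rests on the equicontinuity of the compact family $\{\a_0(y)^{\pm1}\}$, so that conjugation by the fixed cocycle $\a_0$ cannot amplify small increments, and on the continuity of $\psi$ across the level boundaries of the column, which is why the column is taken with pairwise disjoint closures and the collar interpolation is carried by $\Theta(0)=\id_Z$. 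When $Y$ has a basis of clopen sets — as for the odometer $Z(2)$ used in the examples below — the base $A_0$ can be taken clopen and the collar issue disappears.
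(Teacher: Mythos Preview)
The paper does not prove this statement; it is quoted verbatim from \cite[Theorem~4]{GW79} and then invoked (``by the same proof'') to obtain the $n$-weakly mixing version. So there is no proof in the paper to compare your proposal against. That said, your Baire--category scheme is exactly the Glasner--Weiss strategy, and most of the ingredients (the skew-product description of $R_\pi$, openness of the sets $\mathcal O_{i,j}$, the column construction, the telescoping identity for the cocycle) are correctly assembled.

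There is, however, one genuine gap: a circularity in your order of choices. You first pick $N$, then $w_0$, and only \emph{after} that set $\Theta(1)=\a_0(w_0)\,g^{-1}\,\a_0(f^Nw_0)^{-1}$ and choose a path $\Theta$ from $\id_Z$ to $\Theta(1)$. But your final closeness estimate is ``a modulus evaluated at $1/N$, which is $<\ep$ once $N$ is large enough'' --- and that modulus is the modulus of continuity of $\Theta$, which depends on $\Theta(1)$, hence on $w_0$ and on $N$ itself. Pathwise connectedness of $\G$ gives \emph{some} path to each element, but not paths with a uniform modulus into the compact set $\a_0(Y)\cdot\{g^{-1}\}\cdot\a_0(Y)^{-1}$; so as written you cannot let $N\to\infty$ and conclude the increment $\Theta(\psi(fy))^{-1}\Theta(\psi(y))$ tends to $\id_Z$.

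The standard repair --- and this is how the Glasner--Weiss argument is organized --- is to reduce to $F_0=f$ before building the perturbation. Since $F_0=G_0^{-1}fG_0$, approximating $F_0$ within $\ep$ by an element of $\mathcal O_{i,j}$ is the same (by uniform continuity of $G_0^{\pm1}$) as approximating $f$ within some $\ep'=\ep'(\ep,G_0)$ by an $F'\in\mathcal S_\G(f)$ for which $\hat{F'}{}^{\,n}$ hits $(G_0\times G_0)W_i$ into $(G_0\times G_0)W_j$. With $\a_0\equiv\id$ the target endpoint becomes $\Theta(1)=g^{-1}$, a \emph{fixed} element of $\G$, so the path $\Theta$ and its modulus are fixed before $N$ is chosen; then ``take $N$ large'' is legitimate, and the rest of your argument goes through unchanged.
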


By the same proof of \cite[Theorem 4.]{GW79}, we can show the following: If $\G$ is a pathwise connected  subgroup of $H(Z,Z)$ such that $(Z^n,\G)$ is a transitive flow, then for a residual subset $\mathcal {R}_n \subseteq \overline{\mathcal{S}_\G(f)}$, $(X,F)$ is a $n$-weakly mixing extension of $(Y,f)$ for every $F\in \mathcal{R}_n$.
Let ${\mathcal R}=\bigcap_{n=2}^\infty {\mathcal R}_n$. Then we have the following result which slightly generalizes Theorem \ref{thm-GW3}.

\begin{thm}\label{thm-GW4}
Let $\G$ be a pathwise connected  subgroup of $H(Z,Z)$ such that $(Z,\G)$ is a flow such that $(Z^n, \G)$ is transitive for all $n\ge 2$. Then for a residual subset $\mathcal {R}\subseteq \overline{\mathcal{S}_\G(f)}$, $(X,F)$ is a totally weakly mixing extension of $(Y,f)$ for every $F\in \mathcal{R}$.
\end{thm}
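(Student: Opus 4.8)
The plan is to prove an exponent-by-exponent refinement of Theorem \ref{thm-GW3} and then intersect over $n$. Recall that the extension $\pi:(X,F)=(Y\times Z,F)\to(Y,f)$ is $n$-weakly mixing exactly when $(R^n_\pi,F^{(n)})$ is transitive, where $F^{(n)}$ denotes the map induced by $F$ on $R^n_\pi$. The useful observation is that $R^n_\pi$ does not depend on the cocycle: since $\pi$ is the projection onto the first coordinate, $R^n_\pi=\{((y,z_1),\dots,(y,z_n)):y\in Y,\ z_1,\dots,z_n\in Z\}$ is naturally identified with $Y\times Z^n$, and under this identification $F^{(n)}$ is a skew product over $(Y,f)$ whose fiber maps act diagonally on $Z^n$ through the cocycle defining $F$. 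Fix once and for all a countable base $\{O_i\}_{i\in\N}$ of $Y\times Z^n$.

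First I would fix $n\ge 2$ and, for each pair of nonempty basic open sets $O_i,O_j$, put
$$W^{(n)}_{i,j}=\Big\{F\in\overline{\mathcal{S}_\G(f)}:\ (F^{(n)})^kO_i\cap O_j\neq\emptyset\ \text{for some }k\ge1\Big\}.$$
Each $W^{(n)}_{i,j}$ is clearly open in $\overline{\mathcal{S}_\G(f)}$, so the real point is density. This is exactly the Glasner--Weiss perturbation scheme from the proof of \cite[Theorem 4.]{GW79}: given $F_0\in\overline{\mathcal{S}_\G(f)}$ and $\ep>0$ one constructs $F$ within $\ep$ of $F_0$ in $\overline{\mathcal{S}_\G(f)}$ together with $k\ge1$ so that $(F^{(n)})^k$ carries a prescribed point of $O_i$ into $O_j$. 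Along $(Y,f)$ this is governed by minimality of $(Y,f)$; in the fibers one must drive an $n$-tuple of points of $Z$ from one prescribed open box into another, and this is precisely what transitivity of $(Z^n,\G)$ supplies; the necessary change of cocycle over a finite stretch of a $Y$-orbit is realized by an arbitrarily small modification, using the pathwise connectedness of $\G$, exactly as in \cite{GW79}. Granting density, $\mathcal{R}_n=\bigcap_{i,j}W^{(n)}_{i,j}$ is a dense $G_\d$ subset of $\overline{\mathcal{S}_\G(f)}$, which is a Baire space since it is a closed subspace of a complete metric space; and for every $F\in\mathcal{R}_n$ the flow $(R^n_\pi,F^{(n)})$ is transitive, i.e., $\pi$ is $n$-weakly mixing.

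Finally I would set $\mathcal{R}=\bigcap_{n=2}^{\infty}\mathcal{R}_n$. As a countable intersection of residual subsets of the Baire space $\overline{\mathcal{S}_\G(f)}$, $\mathcal{R}$ is residual; and by construction every $F\in\mathcal{R}$ yields an extension $\pi:(X,F)\to(Y,f)$ that is $n$-weakly mixing for all $n\ge2$, hence totally weakly mixing, as desired.

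The step I expect to be the main obstacle is the density of the sets $W^{(n)}_{i,j}$, that is, checking that the perturbation argument of \cite{GW79} carries over verbatim with ``$(Z^n,\G)$ transitive'' replacing the weak mixing hypothesis on $(Z,\G)$. The only substantive change is to invoke transitivity of $(Z^n,\G)$ in order to steer $n$ fiber coordinates simultaneously where \cite{GW79} steers two (equivalently, uses weak mixing of $(Z,\G)$); pathwise connectedness of $\G$ and the structure of $\overline{\mathcal{S}_\G(f)}$ enter exactly as there.
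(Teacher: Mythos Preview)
Your proposal is correct and follows essentially the same approach as the paper: for each $n\ge2$ you invoke the Glasner--Weiss perturbation argument with transitivity of $(Z^n,\G)$ in place of weak mixing of $(Z,\G)$ to obtain a residual set $\mathcal{R}_n$ of $n$-weakly mixing extensions, and then set $\mathcal{R}=\bigcap_{n\ge2}\mathcal{R}_n$. The paper's proof is just this outline, stated in two sentences before the theorem; your write-up simply makes the Baire-category bookkeeping explicit.
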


According to \cite{GW79}, if we choose $Z={\bf P}^n, n\ge 2$ to be the projective $n$-space, and let $\G$ be pathwise connected component of $\id_Z$ in $H(Z,Z)$, then $\G$ satisfies the conditions of Theorem \ref{thm-GW1}, Theorem \ref{thm-GW2} and Theorem \ref{thm-GW4}. Thus for an arbitrary minimal infinite flow $(Y,f)$, there are many minimal homeomorphisms of $Y\times Z$ which are proximal and totally weakly mixing extensions of $(Y,f)$. And we have proved (2) of Theorem \ref{thm-exam-wm}.

\begin{rem}
Note that the examples in Theorem \ref{thm-exam-wm}-(1) are uniformly rigid by Proposition \ref{prop-proximal}. By the proof of Proposition 6.5 of \cite{GM}, one may also require that the examples in Theorem \ref{thm-exam-wm}-(2) are uniformly rigid.
\end{rem}

\subsection{Almost proximal}\
\medskip

In this subsection, we modify the construction of homeomorphisms $\{\varphi_{2k}^j: 1\le j\le
2^{n_{2k}}-2\}_{k=1}^\infty$ to make the
extension $$\pi: (Z(2)\times \bbs^1, F_\Phi )\to (Z(2),\tau), \ (\a , z_0)\mapsto \a$$ almost proximal.

Let $n\in \N $ be a fixed number. Let $\omega=e^{i\frac{2\pi}{n}}$. We always write intervals on $\bbs^1$ anticlockwise, so $[z_1,z_2]$ denotes the anticlockwise closed interval beginning at $z_1$ and ending at $z_2$.
$$S_q=[\w^{q-1}, \w^q]=[e^{i\frac{2\pi}{n} (q-1) }, e^{i\frac{2\pi}{n} q }], \ 1\le q\le n.$$
Then
$$\bbs^1=S_1\cup S_2\cup\ldots \cup S_n.$$
Thus $\bbs^1$ is divided into $n$ closed intervals equally.
For each $q\in \{1,2,\ldots,n\}$, we may regard $S_q$ as $[0,1]$  (via the map $S_q=[e^{i\frac{2\pi}{n} (q-1) }, e^{i\frac{2\pi}{n} q }]\rightarrow [0,1], e^{i2\pi x}\mapsto nx-(q-1)$) and let the map
$\varphi_{2k}^j|_{S_q}: S_q\rightarrow S_q$ be isomorphic to
$g_k(x)=x^{t_k}: [0,1]\rightarrow [0,1]$ when $1\le j \le 2^{n_{2k}-1}-1$; and $g_k^{-1}(x)=x^{1/t_k}: [0,1]\rightarrow [0,1] $ when $2^{n_{2k}-1}-1<j\le 2^{n_{2k}}-2$, where $\{t_k\}$ is a decreasing sequence tending to $1.$

To be precise, for each $q\in \{1,2,\ldots,n\}$, when $z=e^{i2\pi x}\in S_q$, $\varphi_{2k}^j: S_q\rightarrow S_q$ is defined as follows:
\begin{equation}\label{C6}
  \varphi_{2k}^j(z)= \varphi_{2k}^j(e^{i2\pi x})=
\left\{
\begin{array}{ll}
\exp\left({i2\pi \frac{(nx-(q-1))^{t_k}+(q-1)}{n}}\right), & 1\le j \le 2^{n_{2k}-1}-1 \hbox{;} \\
\exp\left( {i2\pi  \frac{(nx-(q-1))^{1/t_k}+(q-1)}{n}}\right), & 2^{n_{2k}-1}-1< j\le 2^{n_{2k}}-2 \hbox{.}
\end{array}
\right. \tag{C6}
\end{equation}
where $\{t_k\}$ is a decreasing sequence tending to $1.$
Thus for each $q\in \{1,2,\ldots, n\}$, $\varphi_{2k}^j|_{S_q}: S_q\rightarrow S_q$ has exactly two fixed points, one repulsive and one attractive. Notice that the construction of $\varphi _{2k} ^j$  also satisfies the convention \eqref{C3}, i.e., $\psi _{2k}=id.$

%Given one point $a_1\in \bbs^1$ and let $a_2,\dots,a_n$ be points in $\bbs^1$ obtained by rotating $a_1$ in the positive direction at angle $\frac{2\pi}{n},\frac{4\pi}{n},\dots,\frac{2(n-1)\pi}{n}$, respectively .
%Let $a_1,a_2,\dots,a_n$ be $n$ dots on the circle $\bbs^1$ with same distance between $a_i$ and $a_{i+1}$ for $\forall i\in \{1,2,\dots,n\}.$ Let $S_i$ be the arc with the endpoints $a_i$ and $a_{i+1},1\le i\le n$, where $i$ is taken to be mod $n.$    In light of $f:[0,1)\to S_i,\ x\mapsto e ^{\frac{2\pi ix}{n}}$ is homeomorphic, one may represent $S_1$ as the unit interval $[0,1)$ and similarly $S_i=[i,i+1).$ Thus,
%\begin{equation}
%	S_1\cup S_2\cup\dots\cup S_n,\quad diam(S_i)=\frac{1}{n},
%\end{equation}
%and any two distinct $S_i$, $S_j$ have exactly one point in common. Without loss of generality we may consider any $\varphi _{2k} ^j$ on $S_i$ as a map from $I=[0,1)\to I$. For $k\ge 1$ and $y\in I,$ take
%$$\varphi_{2k}^j(z)=
%\left\{
%\begin{array}{ll}z^{t_k}, & 1< j \le 2^{n_{2k}-1}-1 \hbox{;} \\z^{1/{t_k}}, & 2^{n_{2k}-1}-1<j\le 2^{n_{2k}}-2 \hbox{.}
%\end{array}
%\right.
%$$
%where $\{t_k\}$ is a decreasing sequence tending to $1.$

%From this  construction we now can see that for any $j\ne 0$ every $\varphi _{2k} ^j$ on $S_i$ with endpoints $a_i$ and $a_{i+1}$ has exactly one fixed point,i.e., $a_i$.

Note that when $n=1,$ $S_1$ is exactly $\bbs^1$ and \eqref{C6} coincides with \eqref{C5}.

\begin{prop}\label{prop-almost-proximal}
Suppose that $\Phi$ satisfies \eqref{C2}, \eqref{C4} and \eqref{C6}. Then the
extension $$\pi: (Z(2)\times \bbs^1, F_\Phi )\to (Z(2),\tau), \ (\a , z)\mapsto \a$$ is an almost proximal extension of uniformly rigid minimal flows, and the cardinality of maximal almost periodic set in each fiber is $n$.
\end{prop}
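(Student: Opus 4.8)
The plan is to follow the same scheme used for Proposition~\ref{prop-proximal}, but now tracking the behaviour on each of the $n$ arcs $S_1,\dots,S_n$ separately. First I would verify that the hypotheses \eqref{C4} and \eqref{C6} force \eqref{C1} and \eqref{C3}; indeed \eqref{C6} is built so that $\psi_{2k}=\id$, and since $t_k\to 1$ the maps $\varphi_{2k}^j$ converge uniformly to $\id_{\bbs^1}$, while \eqref{C4} already gives \eqref{C1} and \eqref{C3} on the odd-indexed blocks. Hence Proposition~\ref{prop-min} applies and $(Z(2)\times\bbs^1,F_\Phi)$ is a uniformly rigid minimal flow, so $\pi$ is an extension of uniformly rigid minimal flows and it only remains to show it is almost proximal with maximal almost periodic cardinality exactly $n$.

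For the upper bound I would use Proposition~\ref{a.p equivalenve}(3): it suffices to exhibit one $\a\in Z(2)$ and $u\in J_\a$ with $|u\pi^{-1}(\a)|=n$. Take $\a=\underline 0$ and let $u$ be any minimal idempotent with $u\underline0=\underline0$. Then $u\pi^{-1}(\underline0)$ is the set of minimal points in the fiber $\{\underline0\}\times\bbs^1$. The key calculation is that, exactly as in Proposition~\ref{prop-proximal} but now arc-by-arc, for any two points $z_0^{(1)},z_0^{(2)}$ lying in the \emph{same} arc $S_q$ one has, using Lemma~\ref{lem-6.4} applied to the times $2^{n_{2k}-1}\cdot m_{2k-1}$, that $\varphi_{2k}^{\,\cdot}$ iterated this many times drives both points toward the attractive fixed point of $g_k$ inside $S_q$ (recall $g_k(x)=x^{t_k}$ on $[0,1]$ has attractive fixed point $0$, i.e.\ the endpoint $\w^{q-1}$, and one checks $d_{\bbs^1}\to 0$ between the two images); hence any two points in the same arc are proximal. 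Consequently, if $A$ is an almost periodic set in a fiber, $A$ contains at most one point from each $S_q$ (two points of $A$ in the same arc would be a proximal pair inside a minimal set, forcing them equal), so $|A|\le n$; by Lemma~\ref{lem-number} this bounds every fiber.

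For the lower bound I would produce an almost periodic set of size exactly $n$ in some fiber: take the $n$ repulsive fixed points $r_1,\dots,r_n$, one in the interior of each $S_q$ (these are the common repulsive fixed points of all $\varphi_{2k}^j$), and consider the point $z_0=(r_1,\dots,r_n)\in(\bbs^1)^n$ over $\underline 0$. Because \eqref{C4} acts by rigid rotations (permuting the arcs cyclically and mapping repulsive fixed points to repulsive fixed points) and \eqref{C6} fixes each $r_q$, the point $(\underline0,r_1,\dots,r_n)$ should be a minimal point of the product flow: its orbit closure under $F_\Phi^{(n)}$ is contained in the set where the $n$ coordinates occupy $n$ distinct arcs with the ``repulsive'' position, a compact set on which $F_\Phi^{(n)}$ acts, and one argues minimality via the odd-block rotations exactly as in Proposition~\ref{prop-min}. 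Then $\{r_1,\dots,r_n\}$ is an almost periodic set of cardinality $n$ in $\pi^{-1}(\underline0)$, so the maximal such cardinality is exactly $n$.

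The main obstacle I expect is the lower bound, specifically showing that $(\underline0,r_1,\dots,r_n)$ is genuinely a \emph{minimal} point of $(X^n,T)$ and not merely recurrent — i.e.\ that the repulsive fixed points cannot be ``lost'' in the limit. The delicate point is that the arcs get permuted by the rotations \eqref{C4} (so one must track how $r_1,\dots,r_n$ are permuted and check the collection $\{r_1,\dots,r_n\}$ is preserved), and that along subsequences realising the odd-block rotations the even-block maps \eqref{C6} are uniformly close to the identity and hence do not perturb the $r_q$; this is where Lemma~\ref{easy1} and the estimate $\lim_k D_{\bbs^1}(\varphi_{2k}^j,\id)=0$ are used. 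Once minimality of this product point is established, everything else is a routine combination of the arc-wise proximality estimate and the results already proved.
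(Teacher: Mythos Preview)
Your upper-bound argument is essentially the paper's: two fiber points in the same arc $S_q$ are proximal via the iterates $2^{n_{2k}-1}\cdot m_{2k-1}$, so an almost periodic set meets each arc in at most one point and hence has cardinality $\le n$. (One small slip: $u\pi^{-1}(\underline 0)$ is not ``the set of minimal points in the fiber''---every point of a minimal flow is minimal---but the conclusion you want only needs the arc-wise proximality, which you have.)

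The lower-bound argument, however, does not work as written. First, the maps in \eqref{C6} have no \emph{interior} fixed points: on $S_q\cong[0,1]$ the map $g_k(x)=x^{t_k}$ fixes only the endpoints $0$ and $1$, which on $\bbs^1$ are the arc endpoints $\omega^{q-1},\omega^q$; there is no repulsive fixed point in the interior of $S_q$. Second, the odd-block rotations \eqref{C4} rotate by $\theta_k=\frac{2\pi}{2^{n_{2k-1}-1}-1}$, which is in general \emph{not} a multiple of $\frac{2\pi}{n}$, so they neither permute the arcs nor preserve the set $\{\omega^0,\dots,\omega^{n-1}\}$. Thus the picture of ``rotations permuting repulsive fixed points while even-block maps fix them'' collapses, and minimality of your candidate product point cannot be obtained this way.

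The paper's device is different and worth noting: by construction every $\varphi_k^j$ (both the rotations in \eqref{C4} and the arc-wise maps in \eqref{C6}) commutes with multiplication by $\omega=e^{2\pi i/n}$. Hence for \emph{any} $z_0$ the $n$-tuple $x_j=(\alpha,z_0\omega^{j-1})$ satisfies $\rho_X(F_\Phi x_j,F_\Phi x_{j+1})=\rho_X(x_j,x_{j+1})=\frac{2\pi}{n}$ for all iterates. One then picks a minimal point $(y_1,\dots,y_n)$ in $\overline{\O}((x_1,\dots,x_n),F_\Phi^{(n)})$ with $y_1=x_1$; the preserved spacing together with orientation preservation forces $y_j=x_j$ for all $j$, so $(x_1,\dots,x_n)$ is itself minimal. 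This $\omega$-equivariance replaces your fixed-point analysis entirely and gives the size-$n$ almost periodic set with no restriction on $z_0$.
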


\begin{proof}
First notice that \eqref{C4} and \eqref{C6} imply \eqref{C1} and \eqref{C3}.  It follows that $(Z(2)\times \bbs^1, F_\Phi )$ is uniformly rigid and minimal by Proposition \ref{prop-min}.
	
Let $\a\in Z(2)$ and $z_0\in \bbs^1$. Let $$x_j=(\a, z_0 e^{i\frac{2\pi}{n}(j-1)})=(\a, z_0\w^{j-1})\in \pi^{-1}\a),\ 1\le j\le n.$$
First we show that $\{x_1, x_2,\ldots, x_n\}$ is an almost periodic set. To attain that aim, we show $(x_1,x_2,\ldots, x_n)$ is a minimal point of $(X^n, F_\Phi^{(n)})$, where $F^{(n)}_\Phi=F_\Phi\times \ldots \times F_\Phi$ ($n$ times).

By the conditions \eqref{C4} and \eqref{C6}, we have
\begin{equation}\label{z1}
  \rho_X(F_\Phi(x_{j}), F_\Phi(x_{j+1}))=\rho_X(x_j, x_{j+1})=\frac{2\pi}{n}, \ 1\le j\le n-1.
\end{equation}
Let $(y_1,y_2,\ldots, y_n)$ be a minimal point of $\overline{\O}((x_1,x_2,\ldots, x_n), F_\Phi^{(n)})$ with $y_1=x_1$. (First we choose any minimal point $(y_1', y_2',\ldots, y_n')$ in $\overline{\O}((x_1,x_2,\ldots, x_n), F_\Phi^{(n)})$. Since $(X,F_\Phi)$ is minimal, there is some $p\in \M$ such that $py_1'=x_1$. Let $(y_1,y_2,\ldots, y_n)=p(y_1', y_2',\ldots, y_n')$. Then $(y_1,y_2,\ldots, y_n)$ is a minimal point of $\overline{\O}((x_1,x_2,\ldots, x_n), F_\Phi^{(n)})$ with $y_1=x_1$.) By \eqref{z1},
$$\rho_X(y_j,y_{j+1})=\rho_X(x_j, x_{j+1})=\frac{2\pi}{n}, \ 1\le j\le n-1.$$
Since $F_\Phi$ preserves orientation, we have
$$[x_1,x_2]=[y_1, y_2]=[x_1,y_2],$$
and hence $x_2=y_2$. By the same reason, we have $x_j=y_j$ for $3\le j\le n$. Thus $(x_1,x_2,\ldots, x_n)=(y_1,y_2,\ldots, y_n)$ is minimal.

Next we show that for each $\a\in Z(2), z_1,z_2\in \bbs^1$, if $\rho_X((\a,z_1),(\a,z_2))<\frac{2\pi}{n}$, then $(\a,z_1),(\a,z_2)$ are proximal. Without loss of generality, we may assume that $\a=\underline{0}$ and $z_1,z_2\in [1,\omega).$
By \eqref{C6}, for  $z=e^{i2\pi x}\in S_1=[1,\omega]$,
\begin{equation*}
  \varphi_{2k}^j(z)= \varphi_{2k}^j(e^{i2\pi x})=
\left\{
\begin{array}{ll}
\exp\left({i2\pi \frac{(nx)^{t_k}}{n}}\right), & 1\leq j \le 2^{n_{2k}-1}-1 \hbox{;} \\
\exp\left( {i2\pi \frac{(nx)^{1/{t_k}}}{n}}\right), & 2^{n_{2k}-1}-1<j\le 2^{n_{2k}}-2 \hbox{.}
\end{array}
\right.
\end{equation*}
Let $m_k$ and $y_m$ be defined as in Lemma \ref{lem-6.4}. According to (\ref{2}), for $1\le s<2^{n_{2k}}$,
	\[F^{s\cdot m_{2k-1}}_\Phi(\underline 0,y_0)=(0^{n_1+n_2+\dots n_{2k-1}}\underline s0^\infty ,y_{s\cdot m_{2k-1}}),\]
	where $y_{s\cdot m_{2k-1}}=\varphi^{s-1}_{2k}\circ \varphi^{s-2}_{2k}\circ \dots \circ \varphi^1_{2k}\circ \varphi ^0_{2k}(y_0).$ Let $z_1=e^{i2\pi a_1}, z_2=e^{i2\pi a_2}$ and $s=2^{n_{2k}-1}$. Then for $j=1,2$,
$$F^{sm_{2k-1}}_\Phi (\underline{0},z_j)=\left(0^{n_1+n_2+\dots n_{2k-1}}\underline s0^\infty, \exp\Big({i2\pi \frac{(na_j)^{t_k^{2^{n_{2k}-1}}}}{n}}\Big)\right)\to (\underline{0}, 1),\  k\to\infty$$
Thus
\[((\underline 0,z_1),(\underline 0,z_2))\in P(Z(2)\times \bbs^1,F_\Phi).\]

To sum up, we have showed that for $\a\in Z(2)$, $A\subseteq \pi^{-1}(\a)$ is an almost periodic set with maximal cardinality if and only if $A=\{(\a, z_0 e^{i\frac{2\pi}{n}(j-1)})=(\a, z_0\w^{j-1}): 1\le j\le n\}$ for some $z_0\in \bbs ^1$. The proof is complete.
\end{proof}

\subsection{Some remarks}\
\medskip

The method to construct the flow $(Z(2)\times \bbs^1, F_\Phi)$ is modified from the examples in \cite{CM06}. This kind of construction originally was from the study of a triangular map of the unit square $[0,1]^2$, which is a continuous map $F: [0,1]^2\rightarrow [0,1]^2$ of the form $F(x,y)=(f(x),g_x(y))$. For a short survey of triangular maps, see \cite{Smi}.

\medskip

We may replace $\bbs^1$ by $\T^n$, ${\bf P}^n$ etc. to get similar minimal flows. Since $\bbs^1$ is enough for our purpose, we do not use them in this paper. But for different purpose, using manifold with higher dimension may be useful.

\section{Further discussion}\label{section-ques}

In this section, we give some questions.

\subsection{Proximality and chaos}\
\medskip

First we restate Problem 5.23. in \cite{AGHSY} as follows.

\begin{ques}\label{ques2}
If a minimal flow  is not point distal ( i.e., for any point $x\in X$ , there is $x'\neq x$ such that $(x,x')$ is proximal), is it chaotic in the sense of Li-Yorke?
\end{ques}

In this paper,we show that if a minimal flow $(X,T)$ is an almost proximal but not almost finite to one extension of some flow $(Y,T)$, then $(X,T)$ is not point distal and it is Li-Yorke chaotic.
See \cite{AGHSY} for another special case about Question \ref{ques2}.

%Though we can not give an answer to Question \ref{ques2}, we hope that our method will be useful in studying this question.

\subsection{Openness and perfectness}\
\medskip

It is a well known fact that for a homomorphism of ergodic measure preserving systems either almost all fibers have constant, finite cardinality or almost all fibers have the cardinality of the continuum.
In Theorem \ref{thm-wm} and Theorem \ref{thm-pr-old}, almost all fibers are perfect.
In fact, in topological case we have the following general result:

\begin{thm}\cite[Theorem 6.31]{AAG}\label{thm-AAG}
Let $\pi: (X,T)\rightarrow (Y,T)$ be an extension of minimal flows. Then one of the following holds:
\begin{enumerate}
  \item $\pi$ is almost finite to one.

  \item Every fiber $\pi^{-1}(y)$ is infinite and $\{y\in Y: \pi^{-1}(y) $ is perfect $\}$ is a residual subset of $Y$.
\end{enumerate}
\end{thm}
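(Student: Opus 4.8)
The plan is to reduce to open extensions via the O-diagram and then to read off the dichotomy from the minimality of the induced system of fibers inside the hyperspace $2^X$.

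\emph{Reduction to the open case.} I would first apply the O-diagram to $\pi$, obtaining almost one-to-one maps $\sigma:X^*\to X$, $\tau:Y^*\to Y$ and an open extension $\pi^*:X^*\to Y^*$ with $X^*=\{(x,\mathbf y)\in X\times Y^*:x\in\mathbf y\}$. Exactly as in the proof of Theorem~\ref{thm-prox}, for $y$ in the dense $G_\delta$ set $Y_c$ of continuity points of $\pi^{-1}$ one has $\tau^{-1}(y)=\{\mathbf y\}$ with $\mathbf y=\pi^{-1}(y)$, and $\sigma$ carries $(\pi^*)^{-1}(\mathbf y)=\{(x,\mathbf y):x\in\mathbf y\}$ bijectively onto $\pi^{-1}(y)$; hence over the residual set $Y_c$ the fibers of $\pi$ and of $\pi^*$ agree up to homeomorphism, so $\pi^{-1}(y)$ is perfect iff the matching fiber of $\pi^*$ is. Moreover, for \emph{every} $y$, picking $y^*\in\tau^{-1}(y)$ and regarding $y^*$ as a closed set $\mathbf y^*\subseteq X$, one has $\mathbf y^*\subseteq\pi^{-1}(y)$ with $|\mathbf y^*|=|(\pi^*)^{-1}(y^*)|$; thus an infinite fiber of $\pi^*$ forces every fiber of $\pi$ to be infinite, while a constant $N$-point fiber of $\pi^*$ yields (over $Y_c$) a finite fiber of $\pi$. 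Both alternatives therefore transfer, and it suffices to treat the open extension $\pi^*$; I assume henceforth that $\pi$ is open.

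\emph{The cardinality trichotomy for open $\pi$.} By Theorem~\ref{thm-open} the map $\phi:Y\to 2^X,\ y\mapsto\pi^{-1}(y)$ is continuous, and it is equivariant, so $\mathcal Y:=\phi(Y)$ is a minimal subsystem of $(2^X,T)$. The function $c(y)=|\pi^{-1}(y)|\in\{1,2,\dots,\infty\}$ is $T$-invariant and, by openness, lower semicontinuous (given $k$ distinct points in $\pi^{-1}(y_0)$, disjoint neighbourhoods have open images whose intersection is a neighbourhood of $y_0$ over which each fiber still meets all $k$ of them). Hence $\{c\le k\}$ is closed and invariant for every finite $k$, so by minimality of $Y$ it is $\emptyset$ or $Y$: either $c\equiv N<\infty$, whence every fiber has $N$ points and $\pi$ is almost finite to one (alternative (1)), or $c\equiv\infty$.

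\emph{Perfectness is residual once $c\equiv\infty$.} Call a compact set $A$ \emph{$n$-scattered} if some $x\in A$ has $A\cap B_{1/n}(x)=\{x\}$. A Hausdorff-limit argument (a limit of $(\ge\!1/n)$-isolated points stays $(\ge\!1/n)$-isolated, using lower semicontinuity of $\pi^{-1}$) shows the $n$-scattered sets form a closed set $\mathcal I_n\subseteq 2^X$, so the perfect sets $\mathcal P=2^X\setminus\bigcup_n\mathcal I_n$ are $G_\delta$. Consequently $Y_{\mathrm{perf}}=\phi^{-1}(\mathcal P)=\{y:\pi^{-1}(y)\text{ perfect}\}$ is a $G_\delta$ subset of $Y$, and it is $T$-invariant since homeomorphisms preserve perfectness. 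A nonempty invariant subset of the minimal flow $Y$ is dense, and a dense $G_\delta$ is residual, so $Y_{\mathrm{perf}}$ is either empty or residual; if it is residual we are in alternative (2), all fibers being infinite because $c\equiv\infty$.

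\textbf{The main obstacle.} What remains, and what I expect to be the crux, is to exclude $Y_{\mathrm{perf}}=\emptyset$: one must produce at least one perfect fiber once every fiber is infinite, equivalently show that the closed sets $\mathcal I_n\cap\mathcal Y$ cannot cover $\mathcal Y$. The natural descent by the Cantor--Bendixson derivative $A\mapsto A'$ is of no direct use, since it is neither upper nor lower semicontinuous on $2^X$ (a shrinking cluster can create or destroy isolated points in the limit), so one cannot simply pass to a smaller minimal subsystem. My proposed route is to exploit the truncated count $N_n(A)=|\{x\in A:A\cap B_{1/n}(x)=\{x\}\}|$, which is bounded by a compactness constant $K(n)$ (such points are $1/n$-separated) and is upper semicontinuous (the sets $\{N_n\ge j\}$ are closed by the same limit argument as for $\mathcal I_n$); being integer-valued it is then continuous off a meager set of $\mathcal Y$, and at a continuity point the $(\ge\!1/n)$-isolated points vary continuously, giving a continuous local selection of an isolated point over an open subset of $Y$. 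The genuine difficulty is that the radius $1/n$ is not $T$-invariant while the minimality argument needs invariance, so converting this local rigidity into the existence of a perfect fiber requires a finer, transfinite Cantor--Bendixson/rank analysis in the minimal system $\mathcal Y$ (this is the substance of \cite[Theorem~6.31]{AAG}); the soft steps above are routine.
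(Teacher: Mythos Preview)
The paper does not contain a proof of this statement at all: Theorem~\ref{thm-AAG} is simply quoted from \cite[Theorem~6.31]{AAG} and then used as motivation for Question~\ref{ques3}. So there is nothing in the paper to compare your proposal against.

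As for your proposal itself, the soft parts are in good shape: the O-diagram reduction to an open extension, the lower-semicontinuity/minimality argument that the fiber-cardinality is constant (either a fixed $N$ or $\infty$), and the observation that $Y_{\mathrm{perf}}$ is an invariant $G_\delta$ and hence either empty or residual, are all correct and standard. The genuine gap is exactly where you flag it: you have not shown that $Y_{\mathrm{perf}}\neq\emptyset$ when every fiber is infinite. Your proposed route via the truncated isolated-point count $N_n$ gives local continuous selections of isolated points, but as you note this structure is not $T$-invariant, and you do not carry the argument through; instead you defer to ``the substance of \cite[Theorem~6.31]{AAG}''. Since that reference \emph{is} the theorem you are trying to prove, this is circular. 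A complete argument really does require the transfinite Cantor--Bendixson analysis carried out in \cite{AAG} (working in the minimal subsystem $\mathcal Y\subseteq 2^X$ and iterating a suitable derivative until a perfect fiber is forced), and your sketch stops short of supplying it.
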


But the proof of theorem above can not imply that in (2) every fiber is perfect even for open extensions. Thus we have the following question:

\begin{ques}\label{ques3}
Let $\pi: (X,T)\rightarrow (Y,T)$ be an extension of minimal flows. If $\pi$ is open and is not finite to one, is every fiber $\pi^{-1}(y)$ perfect?
\end{ques}

\subsubsection{Some special cases}\
\medskip

For some special case, we have positive answer for Question \ref{ques3}.
First we have that for some special weakly mixing extensions, each fiber is perfect.

\begin{thm}\cite{SY18}
Let $\pi: (X,T)\rightarrow (Y,T)$ be a non-trivial weakly mxing RIC extension of minimal flows. Then each fiber is perfect.
\end{thm}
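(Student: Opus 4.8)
The plan is to argue by contradiction: assume that some fiber of $\pi$ has an isolated point, and squeeze a contradiction out of the weak mixing hypothesis together with the RIC structure. Two preliminary observations set the stage. Since $\pi$ is RIC it is open, so by Theorem \ref{thm-open} the assignment $y\mapsto\pi^{-1}(y)$ is a continuous map $Y\to 2^X$; and by Theorem \ref{thm-total-wm} the extension $\pi$ is totally weakly mixing, so for every $n\ge2$ the relation $R^n_\pi$ is transitive and (RIC again) its minimal points are dense in $R^n_\pi$. Also, a non-trivial weakly mixing extension is never distal: a transitive point $(a,b)$ of $R_\pi$ must have $a\ne b$ (otherwise $\overline{\O}((a,a),T)=\D_X$ gives $R_\pi=\D_X$ and $\pi$ is an isomorphism), and the orbit of $(a,b)$ is dense in $R_\pi\supseteq\D_X$, so it accumulates on the diagonal, forcing $(a,b)\in P(X)$; thus $\pi$ is not a distal extension.

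Next I would reduce to the case that every fiber is infinite. If $\pi$ were almost finite to one, Proposition \ref{almost N-1} would give $N\in\N$ and a residual set of $y\in Y$ with $|\pi^{-1}(y)|=N$, while Theorem \ref{thm-wm} gives a residual set of $y$ with $\pi^{-1}(y)$ perfect; since $Y$ is a Baire space these residual sets meet, and a nonempty finite set is not perfect --- a contradiction. (Equivalently: a RIC almost proximal extension is, by Theorem \ref{GP-Structure} with trivial side maps, finite to one equicontinuous, hence distal, contradicting the previous paragraph.) So from now on every fiber of $\pi$ is infinite.

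Let $I=\{x\in X : x\text{ is isolated in }\pi^{-1}(\pi(x))\}$. Then $I$ is $T$-invariant, since each $t\in T$ is a homeomorphism carrying $\pi^{-1}(y)$ onto $\pi^{-1}(ty)$; hence, if $I\ne\emptyset$, $\overline I$ is a nonempty closed invariant set, so $\overline I=X$ by minimality and $I$ is dense in $X$. On the other hand, Theorem \ref{thm-wm} gives a residual $Y_0\subseteq Y$ with $\pi^{-1}(y)$ perfect, hence $I\cap\pi^{-1}(y)=\emptyset$, for $y\in Y_0$; since $\pi$ is continuous and open, $\pi^{-1}(Y\setminus Y_0)$ is meager in $X$, so $I$ is meager yet dense. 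A purely topological argument cannot stop here --- the obstruction is that "isolated in its fiber" is not an open condition in general (fibers may ``pinch off'': $x_0$ can be isolated in $\pi^{-1}(y_0)$ while fibers over points near $y_0$ meet a small ball about $x_0$ in more than one point). This is where the RIC identity $\pi^{-1}(py_0)=p\c u\pi^{-1}(y_0)$ (for $u\in J_{y_0}$, $p\in\M$) and the density of minimal points in the $R^n_\pi$ must enter, and it is the main obstacle.

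To overcome it I would argue as follows. Fix $x_0\in I$, set $y_0=\pi(x_0)$, pick $u\in J_{y_0}$ and $\d>0$ with $\ol B(x_0,2\d)\cap\pi^{-1}(y_0)=\{x_0\}$; note $u\pi^{-1}(y_0)\cap B(x_0,\d)=\{x_0\}$, since $ux\in\pi^{-1}(y_0)$ for $x\in\pi^{-1}(y_0)$. Now use that the fiber $\pi^{-1}(y_0)$ is infinite to choose distinct $x_0,x_1,\dots,x_m\in\pi^{-1}(y_0)$, approximate the tuple $(x_0,x_1,\dots,x_m)\in R^{m+1}_\pi$ by a nearby minimal point --- so there is $v\in J$ fixing all its coordinates --- and transport that minimal tuple by a suitable element of $\M$, using the RIC formula and the (separate) continuity of the circle operation $p\mapsto p\c A$, so that two of its coordinates come to lie inside $\ol B(x_0,\d)$ and in a single fiber; pulling back by $u$ and chasing the circle operation one forces those two coordinates into $\pi^{-1}(y_0)\cap\ol B(x_0,2\d)=\{x_0\}$, contradicting their distinctness. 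The delicate bookkeeping --- interchanging the approximations with the non-jointly-continuous actions of $u$, $v$ and of the circle operation --- is the technical heart. Once it is carried out, $I=\emptyset$, i.e. every fiber of $\pi$ has no isolated point; being also closed and nonempty, every fiber is perfect.
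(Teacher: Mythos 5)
The first half of your argument is fine: the reduction to all fibers being infinite (via Theorem \ref{thm-wm} and Proposition \ref{almost N-1}), the observation that the set $I$ of fiber-isolated points is invariant, dense if nonempty, yet meager, and your frank admission that dense-plus-meager is not a contradiction are all correct. Note, for the record, that the paper does not prove this theorem at all --- it is quoted from \cite{SY18} --- so the only question is whether your argument stands on its own. It does not, because the entire weight of the proof rests on the one step you leave as ``delicate bookkeeping'': the existence of a ``suitable element of $\M$'' carrying a minimal tuple so that two of its coordinates land inside $\ol B(x_0,\d)$ and in a single fiber. Elements of $\M$ act with no metric control whatsoever (they are pointwise limits of group elements and are typically discontinuous), so transporting a minimal tuple by $p\in\M$ preserves distinctness of its coordinates (distality) and preserves membership in a common fiber, but gives no way to force two prescribed coordinates into a prescribed small ball. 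Indeed, if the ball is $\ol B(x_0,2\d)$ and the target fiber is $\pi^{-1}(y_0)$ itself, the existence of such an element is \emph{equivalent} to what you are trying to prove (two distinct points of $\pi^{-1}(y_0)$ within $2\d$ of $x_0$), so as stated the plan is circular at its core. The subsequent step is also unjustified: ``pulling back by $u$'' does not keep points near $x_0$, since $u\in J_{y_0}$ is not continuous, need not fix $x_0$, and does not map the fiber you landed in onto $\pi^{-1}(y_0)$ unless that fiber already sits over a preimage of $y_0$ under $u$.

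A symptom of the gap is that weak mixing plays no identifiable role in your final squeeze: you invoke only the RIC identity, density of minimal points in $R^n_\pi$, and separate continuity of $p\mapsto p\c A$. But all of these hold for any RIC extension, e.g.\ for a nontrivial isometric (equicontinuous) extension with infinite fibers, where squeezing two distal fiber points into an arbitrarily small ball is manifestly impossible (distances within fibers are preserved); and for finite-to-one distal extensions, which are also RIC, the conclusion itself is false. So any correct proof must use the weak mixing hypothesis precisely at the point where your sketch is silent, and nothing in the proposal indicates how transitivity of $R_\pi$ (or of $R^n_\pi$) would be converted into the required localization of two distinct points of the fixed fiber $\pi^{-1}(y_0)$ near the putative isolated point. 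What you have is a correct reduction plus a statement of the main difficulty, not a proof of the theorem; the missing step is the theorem.
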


In the rest of the section, we show that if a distal extension is not finite to one, each fiber is perfect. To prove this result, we need Furstenberg-Ellis's structure theorem.

Furstenberg's structure theorem for distal flows \cite{F63} says that any distal minimal flow can be constructed by equicontinuous extensions. We state the result in its relative version by Ellis \cite{Ellis}. Let $\pi: (X,T)\rightarrow (Y,T)$ be a distal extension of minimal flows. Then there is an ordinal $\eta$
(which is countable when $X$ is metrizable) and a family of flows
$\{(Z_n,T)\}_{n\le\eta}$ such that
\begin{enumerate}
  \item[(i)] $Z_0=Y$,
  \item[(ii)] for every $n <\eta$ there exists a homomorphism
$\rho_{n+1} :Z_{n+1}\to Z_{n}$ which is equicontinuous,
  \item[(iii)] for a limit ordinal $\nu\le\eta$ the flow $Z_\nu$
is the inverse limit of the flows $\{Z_\iota\}_{\iota<\nu}$,
\item[(iv)] $Z_\eta=X$.
\end{enumerate}
\begin{equation}\label{a5}
  Y= Z_0  \stackrel{\rho_1} \longleftarrow  Z_1   \stackrel{\rho_2}\longleftarrow \cdots  \stackrel{\rho_n} \longleftarrow Z_n  \stackrel{\rho_{n+1}} \longleftarrow Z_{n+1}
 \longleftarrow \cdots \stackrel{\rho_{\eta}} \longleftarrow Z_\eta=X.
\end{equation}

When $Y=\{pt\}$ is the trivial flow, we have the structure theorem for a distal minimal flow.

\begin{thm}
Let $\pi: (X,T)\rightarrow (Y,T)$ be a distal extension of minimal flows. If $\pi$ is not finite to one, then each fiber is perfect.
\end{thm}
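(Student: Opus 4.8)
The plan is to combine Furstenberg-Ellis's structure theorem \eqref{a5} for the distal extension $\pi$ with a transfinite induction argument, reducing the general case to the single-step equicontinuous case. First I would observe that since $\pi$ is not finite to one, by Lemma \ref{finite-to-one} (equivalently by Proposition \ref{almost N-1}) $\pi$ is not equicontinuous, hence there must be some stage in the tower \eqref{a5} where the fibers genuinely branch. More precisely, for each $y\in Y$ the fiber $\pi^{-1}(y)$ carries the structure of an inverse limit (over the tower restricted above $y$) of the fibers of the intermediate maps $\rho_{n+1}$, each of which is equicontinuous. Since $\pi$ is not finite to one, there must exist a point $z$ in some $Z_n$ and a successor step $\rho_{n+1}$ over $z$ whose fiber $\rho_{n+1}^{-1}(z)$ is infinite, or else, by a limit-ordinal argument, infinitely many successor steps over a single fiber have non-trivial fibers.

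The key technical input is the following claim about a single equicontinuous extension $\rho : (Z',T)\rightarrow (Z,T)$: \emph{if some fiber $\rho^{-1}(z)$ is infinite, then every fiber is infinite and in fact perfect.} This follows because an equicontinuous extension is an isometric extension, so the fibers all "look the same": there is a $T$-invariant family of metrics $d_z$ on the fibers and the acting group together with the Ellis semigroup moves fibers isometrically onto one another (one can use that a minimal equicontinuous extension is, up to almost one-to-one modification, a group extension $Z' = Z \times_H K$ with $K$ a compact group and fiber $K/H$; a homogeneous space of a compact group is either finite or perfect). So in the single-step case perfectness is immediate: an infinite compact homogeneous space has no isolated points.

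Next I would push this up the tower. For a successor ordinal: if $\pi_n : Z_n \rightarrow Y$ is such that $Z_n \rightarrow Y$ already has all fibers perfect, then so does $Z_{n+1}\rightarrow Y$, since $\rho_{n+1}$ is an open (indeed distal, hence open) surjection and the preimage of a perfect set under an open continuous surjection between compact metric spaces with no isolated fiber points is again perfect — more carefully, each fiber of $Z_{n+1}\to Y$ fibers over a perfect fiber of $Z_n\to Y$ with fibers that are themselves either all singletons or all perfect, and a "bundle" over a perfect base is perfect regardless. If instead $Z_n \to Y$ is still finite-to-one but $\rho_{n+1}$ has an infinite fiber, then by the single-step claim $\rho_{n+1}$ has all fibers perfect, and composing with the finite-to-one (hence clopen-fibered) map $Z_n \to Y$ gives that $Z_{n+1}\to Y$ has all fibers perfect. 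For a limit ordinal $\nu$: $Z_\nu \to Y$ is the inverse limit of $Z_\iota \to Y$, $\iota < \nu$; if some $Z_\iota \to Y$ already has perfect fibers we are done by the previous paragraph applied along the tail, and otherwise each $Z_\iota\to Y$ is finite-to-one, so an inverse limit of finite-to-one distal extensions — if the cardinalities stay bounded it would be finite-to-one, contradicting that $\pi$ is not finite-to-one, so the cardinalities are unbounded, and an inverse limit of spaces with unbounded finite fiber cardinality over a fixed base point is a Cantor-type set, hence perfect. Running this induction to $\eta$ gives that $\pi = (Z_\eta \to Y)$ has all fibers perfect.

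The main obstacle I anticipate is the limit-ordinal step and, relatedly, pinning down exactly where the branching occurs: one must argue carefully that "not finite to one" forces either an infinite fiber at some successor step or unboundedly many non-trivial finite successor steps accumulating at a limit, and that in the latter case the relevant fiber of the inverse limit is genuinely perfect (no isolated points) rather than merely infinite. This needs the uniformity coming from minimality and equicontinuity of the $\rho_{n+1}$ — the fact that "having an infinite fiber" is an all-or-nothing property at each level — so that one cannot have the pathology of finitely-many-but-growing fibers that somehow still leaves an isolated point in the inverse limit. I would handle this by working fiberwise over a single $y_0\in Y$ and tracking the fiber $\pi^{-1}(y_0) = \varprojlim \rho_n^{-1}(\cdot)$ directly as an inverse limit of finite or perfect compact spaces, using that an inverse limit of a system of surjections of compact metric spaces in which infinitely many bonding maps are non-injective (uniformly, with multiplicity $\geq 2$ everywhere, which equicontinuity provides) has no isolated points.
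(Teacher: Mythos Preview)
Your proposal is correct and follows essentially the same route as the paper: both use the Furstenberg--Ellis tower \eqref{a5}, the fact that fibers of an equicontinuous extension are homogeneous (hence either finite or perfect), a composition lemma saying that if one of two open extensions has all fibers perfect then so does their composite, and an inverse-limit argument for the case where every $\rho_n$ is finite-to-one. Your treatment of the transfinite induction and the limit-ordinal step is in fact more explicit than the paper's, which states the same dichotomy and composition claim but leaves the passage through limit ordinals largely to the reader.
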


\begin{proof}
We need an equivalent characterization of an equicontinuous extension.
Let $M$ be a homogeneous compact metric space. By this we mean a compact metric space such that for any two points $x,y\in M$, there is an isometry of $M$ taking $x$ into $y$. The isometries of $M$ form a compact group $H$, $M$ may be identified with a coset space $H/H_0$, where $H_0$ is the subgroup of $H$ leaving a given point of $M$ fixed.

Let $\pi: X\rightarrow Y$ be an extension of flows. $\pi$ is equicontinuous if and only if there exists a continuous map $\rho: R_\pi\rightarrow \R$ such that for each $y\in Y$, $\rho$ defines a metric on the fiber $X_y=\pi^{-1}(y)$ under which $X_y$ is isometric to $M$, and $\rho(tx_1,tx_2)=\rho(x_1,x_2)$ for all $t\in T$ and $(x_1,x_2)\in R_\pi$ \cite{F63}.
Thus if $\pi$ is equicontinuous, then each fiber is isometric to $M=H/H_0$. Hence if $\pi$ is not finite to one, then each fiber is perfect.

To deal with distal extensions, we use Furstenberg-Ellis structure theorem as stated above. Let $\{(Z_n,T)\}_{n\le\eta}$ be the factors. Since $\pi$ is not finite to one, either $\eta$ is not finite ordinal and each $\rho_n$ is finite to one, or there is some $n\le \eta$ such that $\rho_n$ is not finite to one. In the first case, each fiber is an inverse limit of finite sets and it is a Cantor set; in the second case, $\rho_n$ is infinite to one equicontinuous extension and each fiber of $\rho_n$ is perfect and by this we claim that each fiber of $\pi$ is also perfect. To prove the second case, we need the following claim: if $\pi_1:(X_1,T)\to (X_2,T) ,$ $\pi_2:(X_2,T)\to (X_3,T)$ are open extensions such that each fiber of $\pi_1$ or $\pi_2$ is perfect, then each fiber of $\pi_2 \circ \pi_1$ is perfect. First by definition it is easy to see that when each fiber of $\pi_1$ is perfect, we have each fiber of $\pi_2 \circ \pi_1$ is perfect. Next we show the other case. Suppose that each fiber of $\pi_2$ is perfect. We show that for each $z\in X_3$, $(\pi_2 \circ \pi_1)^{-1} (z)$ is perfect. Let $x\in (\pi_2 \circ \pi_1)^{-1} (z)$ and  $y=\pi_1(x).$ Clearly $y\in \pi_2^{-1}(z),$ and by perfectness of $\pi_2^{-1}(z)$ one can find $y_n \in \pi_2^{-1}(z)$ such that $y_n \to y$ as $n\to\infty.$ Since $\pi_1$ is open, there exists $x_n\in X$ such that $\pi_1(x_n)=y_n$ and $x_n \to x$ as $n\to\infty$. To sum up, there are $x_n \in (\pi_2 \circ \pi_1)^{-1} (z)$ such that $x_n \to x$, $n\to\infty.$ Thus each point of $(\pi_2 \circ \pi_1)^{-1} (z)$ is not isolated and $(\pi_2 \circ \pi_1)^{-1} (z)$ is perfect. Thus we have the claim. By this claim and Furstenberg-Ellis structure theorem \eqref{a5}, one can show that each fiber of $\pi$ is perfect.
\end{proof}

\subsection{Open proximality and Entropy}\
\medskip

Our last question is about entropy. Let $\pi: (X,\Z)\rightarrow (Y,\Z)$ be an extension of discrete flows. If $h_{\rm top}(X)> h_{\rm top} (Y)$, then lots of fiber will have very complex properties (see \cite{Zhang} for example). We are not sure that open proximal extensions can reach those kinds of complexity. Thus we have the following question.

\begin{ques}\label{ques4}
Let $\pi: (X,\Z)\rightarrow (Y,\Z)$ be an open proximal extension of discrete minimal flows. Is it true that $h_{\rm top}(X)=h_{\rm top} (Y)$?
\end{ques}

Note that in Question \ref{ques4}, openness is a necessary condition, since there are lots of minimal flows which are almost one to one extensions of their maximal equicontinuous factors, and they have positive entropy \cite{FW89}.

%%%%%%%%%%%%%%%%%%%%%%%%%%%%%%%%%%%%%%%%%%%%%%%%%%%%%%%%

\end{document}